\numberwithin{equation}{section}
\newtheorem{theorem}{Theorem}[section]
\newtheorem{proposition}[theorem]{Proposition}
\newtheorem{propositionM}[theorem]{Proposition}
\newtheorem{lemmaM}[theorem]{Lemma}
\newtheorem{remark}{Remark}[section]
\newtheorem{example}{Example}[section]
\newcommand{\OMIT}[1]{{\bf [OMIT:} #1 \ {\bf --- end OMIT] }}  
   \renewcommand{\OMIT}[1]{}            
\newcommand{\RR}{{\mathbb{R}}}
\newcommand{\ZZ}{{\mathbb{Z}}}
\newcommand{\dom}{{\rm dom\,}}
\newcommand{\Rminf}{\RR \cup \{ -\infty \}}
\newcommand{\finbox}{\hspace*{\fill}$\rule{0.2cm}{0.2cm}$}
\newcommand{\Mnat}{{M$^{\natural}$}}
\newcommand{\BvexS}{\mbox{\rm (B-EXC[$\mathbb{B}$])} }
\newcommand{\BvexSb}{\mbox{\rm\bf (B-EXC[$\mathbb{B}$])}}
\newcommand{\BnvexS}{\mbox{\rm (B$\sp{\natural}$-EXC[$\mathbb{B}$])} }
\newcommand{\BnvexSb}{\mbox{\rm\bf (B$\sp{\natural}$-EXC[$\mathbb{B}$])}}
\newcommand{\BvexwS}{\mbox{\rm (B-EXC$_{\rm w}$[$\mathbb{B}$])}}
\newcommand{\BvexwSb}{\mbox{\rm\bf (B-EXC$_{\rm\bf w}$[$\mathbb{B}$])}}
\newcommand{\BvexmSb}{\mbox{\rm\bf (B-EXC$_{\rm\bf m}$[$\mathbb{B}$])}}
\newcommand{\BnvexmSb}{\mbox{\rm\bf (B$\sp{\natural}$-EXC$_{\rm\bf m}$[$\mathbb{B}$])}}
\newcommand{\McavS}{\mbox{\rm (M-$\overline{{\rm EXC}}$[$\mathbb{B}$])} }
\newcommand{\McavSb}{\mbox{\rm\bf (M-$\overline{{\rm\bf EXC}}$[$\mathbb{B}$])}}
\newcommand{\McavwS}{\mbox{\rm (M-$\overline{{\rm EXC}}_{\rm w}$[$\mathbb{B}$])} }
\newcommand{\McavwSb}{\mbox{\rm\bf (M-$\overline{{\rm\bf EXC}}_{\rm\bf w}$[$\mathbb{B}$])} }
\newcommand{\MncavS}{\mbox{\rm (M$\sp{\natural}$-$\overline{{\rm EXC}}$[$\mathbb{B}$])} }
\newcommand{\MncavSb}{\mbox{\rm\bf (M$\sp{\natural}$-$\overline{{\rm\bf EXC}}$[$\mathbb{B}$])}}
\newcommand{\MncavlocS}{\mbox{\rm (M$\sp{\natural}$-$\overline{{\rm EXC}}_{\rm loc}$[$\mathbb{B}$])} }
\newcommand{\MncavlocSb}{\mbox{\rm\bf (M$\sp{\natural}$-$\overline{{\rm\bf EXC}}_{\rm\bf loc}$[$\mathbb{B}$])} }
\newcommand{\McavlocS}{\mbox{\rm (M-$\overline{{\rm EXC}}_{\rm loc}$[$\mathbb{B}$])} }
\newcommand{\McavlocSb}{\mbox{\rm\bf (M-$\overline{{\rm\bf EXC}}_{\rm\bf loc}$[$\mathbb{B}$])} }
\newcommand{\McavmS}{\mbox{\rm (M-$\overline{{\rm EXC}}_{\rm m}$[$\mathbb{B}$])} }
\newcommand{\MncavmS}{\mbox{\rm (M$\sp{\natural}$-$\overline{{\rm EXC}}_{\rm m}$[$\mathbb{B}$])} }
\newcommand{\MncavmsS}{\mbox{\rm (M$\sp{\natural}$-$\overline{{\rm EXC}}_{\rm ms}$[$\mathbb{B}$])} }
\newcommand{\McavmSb}{\mbox{\rm\bf (M-$\overline{{\rm\bf EXC}}_{\rm\bf m}$[$\mathbb{B}$])}}
\newcommand{\MncavmSb}{\mbox{\rm\bf (M$\sp{\natural}$-$\overline{{\rm\bf EXC}}_{\rm\bf m}$[$\mathbb{B}$])}}
\newcommand{\MncavmsSb}{\mbox{\rm\bf (M$\sp{\natural}$-$\overline{{\rm\bf EXC}}_{\rm\bf ms}$[$\mathbb{B}$])}}
\begin{document}

\title{Exchange Properties of M$\sp{\natural}$-concave Set Functions 
\\
and Valuated Matroids
}

\author{
Kazuo Murota%
\thanks{
The Institute of Statistical Mathematics,
Tokyo 190-8562, Japan; and
Tokyo Metropolitan University, 
Tokyo 192-0397, Japan}
}

\date{May 2021}

\maketitle

\begin{abstract}
This is a survey article on the exchange properties characterizing 
\Mnat-concave set functions and valuated matroids (M-concave set functions).
The objective of this paper is to 
collect related results scattered in the literature
and to give (reasonably) self-contained elementary proofs for them.
\end{abstract}

{\bf Keywords}:
Discrete convex analysis, 
\Mnat-concave set function,
Valuated matroid,
Exchange properties.



\tableofcontents


\section{Introduction}
\label{SCintro}

In discrete convex analysis
\cite{Mdca98=valmat, Mdcasiam=valmat},
\cite[Chapter VII]{Fuj05book=valmat},
M-concave functions and their variant called M$\sp{\natural}$-concave functions 
play a major role.
The concepts of M-concave functions 
and M$\sp{\natural}$-concave functions 
were introduced, respectively, by
Murota \cite{Mstein96=valmat}
and Murota--Shioura \cite{MS99gp=valmat}
for functions defined on the integer vectors.
In this paper we deal with
\Mnat- and M-concave set functions, that is,
\Mnat- and M-concave functions defined on $\{ 0, 1 \}$-vectors.
M-concave set functions are exactly the same as
valuated matroids introduced earlier by 
Dress--Wenzel \cite{DW90=valmat,DW92=valmat}.
\Mnat-concavity of a set function has significance in economics,
as it is equivalent to the gross substitutes property (GS)
of Kelso--Crawford \cite{KC82=valmat}.
See Murota \cite[Chapter 11]{Mdcasiam=valmat}, Murota \cite{Mdcaeco16=valmat},
and Shioura--Tamura \cite{ST15jorsj=valmat}
for more about economic significance of \Mnat-concavity.


In this paper we are interested in various types of 
exchange properties characterizing 
\Mnat-concave and M-concave set functions. 
We aim at collecting related results scattered in the literature
and giving (reasonably) self-contained elementary proofs for them.
The exchange properties for \Mnat-concave set functions
treated in Section~\ref{SCmncavsetfn} 
are mostly based on 
Murota--Shioura \cite{MS99gp=valmat,MS18mnataxiom=valmat}.
The proofs given in Section~\ref{SCmncavexcprf} are obtained 
by translating the proofs given in \cite{MS18mnataxiom=valmat} 
for functions on the integer lattice
to those for set functions (with some simplifications). 
The exchange properties for M-concave set functions
treated in Section~\ref{SCmcavsetfn} are
based on Murota \cite{Mmax97=valmat,Mspr2000=valmat,Mdcasiam=valmat},
while the proofs are made consistent with those in Section~\ref{SCmncavexcprf}.
Multiple exchange properties 
treated in Section~\ref{SCexchange01mult} 
are taken from Murota \cite{Mmultexc18=valmat,Mmultexcstr18=valmat}.


\section{\Mnat-concave Set Functions}
\label{SCmncavsetfn}


\subsection{Definition}
\label{SCmncavsetfnDef}

Let 
$f: 2\sp{N} \to \Rminf$ be
a real-valued set function on $N = \{ 1,2,\ldots, n \}$ and 
$\mathcal{F} = \dom f$ be the {\em effective domain} of $f$
defined by
\begin{equation} \label{effdom01def}
\dom f = \{ X \subseteq N \mid f(X) > -\infty \}.
\end{equation}
We always assume that $\dom f$ is nonempty.

We say that $f$
is an {\em \Mnat-concave function}, if,
for any $X, Y \in \mathcal{F}$ and $i \in X \setminus Y$,
we have (i)
$X - i \in \mathcal{F}$, $ Y + i \in \mathcal{F}$ and
\begin{equation}  \label{mnatcav1}
f( X) + f( Y ) \leq f( X - i ) + f( Y + i ),
\end{equation}
or (ii) there exists some $j \in Y \setminus X$ such that
$X - i +j \in \mathcal{F}$, $ Y + i -j  \in \mathcal{F}$ and
\begin{equation}  \label{mnatcav2}
f( X) + f( Y ) \leq  f( X - i + j) + f( Y + i -j).
\end{equation}
Such property is referred to as an {\em exchange property}.
Here we use short-hand notations
$X - i = X \setminus  \{ i \}$ and $Y + i = Y \cup \{ i \}$
as well as  $X - i + j =(X \setminus  \{ i \}) \cup \{ j \}$
and $Y + i - j =(Y \cup \{ i \}) \setminus \{ j \}$.

An \Mnat-concave function can also be defined 
without explicit reference to its effective domain
by the following expression of the exchange property:%
\footnote{
In acronym $\MncavS$, 
``EXC''stands for ``exchange'' 
and $\overline{\rm \phantom{EXC}}$ in ``$\overline{\rm EXC}$'' 
indicates concavity (in contrast to convexity);
``$\mathbb{B}$'' stands for ``binary'' showing that this condition
applies to set functions
(in contrast to ``$\mathbb{Z}$'' for functions on the integer lattice).
}  
\begin{description}
\item[\MncavSb] 
For any $X, Y \subseteq N$ and $i \in X \setminus Y$, we have
\begin{align}
f( X) + f( Y )   &\leq 
   \max\left( f( X - i ) + f( Y + i ), \ 
 \max_{j \in Y \setminus X}  \{ f( X - i + j) + f( Y + i -j) \}
       \right) ,
\label{mnatconcavexc2}
\end{align}
\end{description}
where $(-\infty) + a = a + (-\infty) = (-\infty) + (-\infty)  = -\infty$ for $a \in \RR$,
$-\infty \leq -\infty$, and
the maximum taken over an empty set is defined to be $-\infty$.


The effective domain of an \Mnat-concave function
is equipped with a nice combinatorial structure.
Let $\mathcal{F}$ be the effective domain of an \Mnat-concave function $f$.
As a consequence of the exchange property $\MncavS$ of function $f$,
the set family $\mathcal{F}$
satisfies the following exchange property:
\begin{description}
\item[\BnvexSb] 
For any $X, Y \in \mathcal{F}$ and $i \in X \setminus Y$, \ 
we have
(i) $X - i \in \mathcal{F}$, $ Y + i \in \mathcal{F}$
\ or \  
(ii) there exists some $j \in Y \setminus X$ such that
$X - i +j \in \mathcal{F}$, $ Y + i -j  \in \mathcal{F}$.
\end{description}
This means that $\mathcal{F}$ forms a matroid-like structure, 
called a {\em generalized matroid} ({\em g-matroid}).
In this paper we refer to it as an {\em \Mnat-convex family}
to emphasize its role for discrete convexity.

An \Mnat-convex family containing the empty set
as its member
is exactly the family of independent sets of a matroid.
An \Mnat-convex family 
consisting of equi-cardinal sets forms 
the family of bases of a matroid, 
which may also be called an {\em M-convex family}\index{M-convex family}.
More generally, an equi-cardinal subfamily%
\footnote{
An equi-cardinal subfamily of $\mathcal{F}$ means
a set family represented as
$\{ X \in \mathcal{F} \mid |X| = r \}$ for some $r \in \ZZ$.
} 
of an \Mnat-convex family is an M-convex family.
M-convex families (matroid bases) and
\Mnat-convex families (g-matroids) 
are fully studied in the literature of matroids
(Frank \cite{Fra11book=valmat}, 
Oxley \cite{Oxl11=valmat},
Schrijver \cite{Sch03=valmat}, and
Welsh \cite{Wel76=valmat}).

\subsection{Exchange properties characterizing \Mnat-concave functions}
\label{SCexchange01glob}

Under the assumption that the 
$\dom f$ contains the empty set,
the \Mnat-concavity can be characterized by a simpler condition:
\begin{description}
\item[{\bf (P1$[\mathbb{B}]$)}] 
For any $X, Y \subseteq N$ with $|X| < |Y|$,
we have
\begin{align}
f( X) + f( Y )   &\leq 
 \max_{j \in Y \setminus X}  \{ f( X  + j) + f( Y  -j) \} .
\label{mnatP1=01}
\end{align}
\end{description}
This condition (P1$[\mathbb{B}]$), 
applicable to a pair of subsets $(X, Y)$ with with $|X| < |Y|$, 
makes the pair closer with an appropriate element $j \in Y \setminus X$
without decreasing the sum of the function values.

\begin{theorem}\label{THmnatcavP1hered01}
Let $f: 2\sp{N} \to \Rminf$ 
be a set function 
with $\dom f$ containing the empty set.
Then 
$f$ is \Mnat-concave
if and only if it satisfies
{\rm (P1$[\mathbb{B}]$)}.
\end{theorem}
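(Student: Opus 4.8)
The plan is to establish the two implications separately; only the direction ``{\rm (P1$[\mathbb{B}]$)} $\Rightarrow$ \Mnat-concave'' will use the hypothesis $\emptyset\in\dom f$. For the necessity direction I would read {\rm (P1$[\mathbb{B}]$)} as the assertion that, whenever $|X|<|Y|$, \emph{some} $j\in Y\setminus X$ satisfies $f(X+j)+f(Y-j)\ge f(X)+f(Y)$, i.e.\ some single transfer of one element from $Y$ into $X$ does not decrease the function sum. This is exactly the ``pure exchange'' alternative (i) of \MncavS applied to the ordered pair $(Y,X)$ with a chosen $i\in Y\setminus X$, since that alternative reads $f(X)+f(Y)\le f(X+i)+f(Y-i)$ and supplies the witness $j=i$. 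When $X\subseteq Y$ the inner maximum of \MncavS is taken over the empty set $X\setminus Y=\emptyset$, so alternative (i) is forced and {\rm (P1$[\mathbb{B}]$)} holds outright; this is the base case of an induction on $|X\setminus Y|$. For the inductive step I would take a counterexample with $|X\setminus Y|$ minimal: no $i\in Y\setminus X$ can then fall into alternative (i), so for every $i\in Y\setminus X$ alternative (ii) produces $k\in X\setminus Y$ with $f(X+i-k)+f(Y-i+k)\ge f(X)+f(Y)$, and writing $\widetilde X=X+i-k$ one checks $|Y\setminus\widetilde X|=|Y\setminus X|-1$.

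The obstacle I anticipate in this direction is that the conclusion of {\rm (P1$[\mathbb{B}]$)} is anchored to the \emph{given} set $X$, whereas each swap in alternative (ii) replaces $X$ by $\widetilde X=X+i-k$; a single swap inequality controls only the \emph{sum} $f(\widetilde X)+f(\widetilde Y)$, not the individual value $f(\widetilde X)$, so the inductive conclusion for $(\widetilde X,Y)$ cannot be transported back to $(X,Y)$ in one line. I expect the clean route to be a reformulation on the symmetric difference: fix the common part $C=X\cap Y$, put $D=X\triangle Y$, and study the folded function $h(S)=f(C\cup S)+f(C\cup(D\setminus S))$ for $S\subseteq D$. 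In these terms the minimal counterexample says that $A:=X\setminus Y$ is a strict local maximum of $h$ under the addition of any single element of $D\setminus A$, yet every such addition admits a value-non-decreasing one-element swap back into $A$. Converting this into a contradiction — using that an \Mnat-concave $f$ is in particular submodular, so that $h$ is submodular as well — is the technical heart of the necessity direction.

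For the sufficiency direction the target is the full property \MncavS for an \emph{arbitrary prescribed} $i\in X\setminus Y$, and the plan is an induction on $|X\cup Y|$ whose smallest instances are anchored by $\emptyset\in\dom f$. The natural step, in the case $|Y|<|X|$, is to feed {\rm (P1$[\mathbb{B}]$)} the pair $(Y,X)$, which returns \emph{some} $\ell\in X\setminus Y$ with $f(Y+\ell)+f(X-\ell)\ge f(X)+f(Y)$: if $\ell=i$ this is precisely the first term on the right-hand side of \MncavS and we are done, whereas if $\ell\neq i$ I would delete $\ell$ from $X$ — which lowers $|X\cup Y|$ and keeps $i\in(X-\ell)\setminus Y$, the reduced sets staying in $\dom f$ by the g-matroid property \BnvexS together with the anchoring at $\emptyset$ — and invoke the inductive hypothesis on $(X-\ell,Y,i)$ to recover the prescribed $i$. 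The remaining configurations $|X|\le|Y|$ would be reduced to this one by first removing $i$ and comparing $X-i$ with $Y$.

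The single difficulty underlying both directions is the gap between the \emph{existential} exchange element furnished by {\rm (P1$[\mathbb{B}]$)} and the \emph{specific} element $i$ demanded by \MncavS; the role of $\emptyset\in\dom f$ is to guarantee that every smaller set produced while pivoting from a wrong element $\ell$ toward the prescribed $i$ remains in the effective domain, so that the induction can run to completion.
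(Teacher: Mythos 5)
Your proposal has genuine gaps in both directions, so it does not constitute a proof. In the necessity direction (\Mnat-concave $\Rightarrow$ (P1$[\mathbb{B}]$)) you concede the point yourself: after correctly observing that the naive induction on $|X\setminus Y|$ fails because an exchange certificate for the swapped pair $(\widetilde X,\widetilde Y)$ cannot be transported back to $(X,Y)$, you replace the induction by a sketch (folding onto the symmetric difference, submodularity of the folded function $h$) and then state that converting this into a contradiction is ``the technical heart'' of the direction --- that heart is exactly what is missing. In the paper this direction is Theorem~\ref{THmconcavcardexc01}(1), and its proof is not a short submodularity argument: it passes through the local conditions (Proposition~\ref{PRmconcavlocexc01onlyif}) and then Lemma~\ref{LMmnatexcdiffcard01}, a perturbation argument in which a price vector $p$ is built from a minimal counterexample in $\mathcal{D}$ and the tilted function $f_{p}$ is used to manufacture a smaller counterexample. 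Nothing in your sketch substitutes for that step.

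The sufficiency direction has an unacknowledged structural flaw --- the very certificate-transport problem you flagged in the other direction. In your case $|Y|<|X|$, the property (P1$[\mathbb{B}]$) applied to $(Y,X)$ yields $f(X)+f(Y)\le f(X-\ell)+f(Y+\ell)$, an inequality about the pair $(X-\ell,\,Y+\ell)$; but you then invoke the inductive hypothesis on $(X-\ell,\,Y,\,i)$, whose conclusion bounds $f(X-\ell)+f(Y)$, a quantity the (P1$[\mathbb{B}]$) inequality says nothing about, so the two inequalities do not chain. If you instead apply the hypothesis to $(X-\ell,\,Y+\ell,\,i)$, your induction measure does not decrease, since $|(X-\ell)\cup(Y+\ell)|=|X\cup Y|$ (and the symmetric difference is also unchanged); and even ignoring that, the certificates it returns have the form $f(X-\ell-i+j)+f(Y+\ell+i-j)$, which --- with the single exception $j=\ell$, reproducing only the first alternative $f(X-i)+f(Y+i)$ --- are not of the form $f(X-i+j)+f(Y+i-j)$ required for $(X,Y,i)$ and cannot be converted back. (Your appeal to \BnvexS for $\dom f$ is also circular unless that property is first derived from (P1$[\mathbb{B}]$).) The paper avoids any such direct induction: it shows that (P1$[\mathbb{B}]$) together with $\emptyset\in\dom f$ forces $\dom f$ to satisfy the matroid independence axioms (the hereditary axiom (I-2) via Lemma~\ref{LMgenbox01}), notes that (L1$[\mathbb{B}]$) and (L2$[\mathbb{B}]$) are immediate consequences of (P1$[\mathbb{B}]$), and then invokes the local characterization Theorem~\ref{THmnatcavlocexc01hered}, whose proof rests on the perturbation lemmas of Section~\ref{SCproofmnatexccardloc}. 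Some machinery of this local-to-global or perturbation type appears unavoidable here, and your proposal contains none of it.
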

\begin{proof}
The proof is given in Section~\ref{SCproofmnatcavP1hered01}.
\end{proof}

\Mnat-concave functions satisfy
other cardinality-restricted exchange properties
 and are characterized by some combinations thereof.
The exchange properties 
(P2$[\mathbb{B}]$) and (P3$[\mathbb{B}]$) below
exclude the first possibility (\ref{mnatcav1}) in $\MncavS$
when $|X| \leq |Y|$,
and (P4$[\mathbb{B}]$) is a special case of $\MncavS$ with $|X| > |Y|$.

\begin{description}

\item[{\bf (P2$[\mathbb{B}]$)}] 
For any $X, Y \subseteq N$ with $|X| = |Y|$ and $i \in X \setminus Y$,
we have
\begin{align}
f( X) + f( Y )   &\leq 
 \max_{j \in Y \setminus X}  \{ f( X - i + j) + f( Y + i -j) \} ;
\label{mnatP2=01}
\end{align}

\item[{\bf (P3$[\mathbb{B}]$)}] 
For any $X, Y \subseteq N$ with $|X| < |Y|$ and $i \in X \setminus Y$,
we have
\eqref{mnatP2=01}:
\begin{align}
f( X) + f( Y )   &\leq 
 \max_{j \in Y \setminus X}  \{ f( X - i + j) + f( Y + i -j) \};
\label{mnatP3=01}
\end{align}

\item[{\bf (P4$[\mathbb{B}]$)}] 
For any $X, Y \subseteq N$ with $|X| > |Y|$ and $i \in X \setminus Y$,
we have
{\rm (\ref{mnatconcavexc2})}:
\begin{align}
f( X) + f( Y )   &\leq 
   \max\left( f( X - i ) + f( Y + i ), \ 
 \max_{j \in Y \setminus X}  \{ f( X - i + j) + f( Y + i -j) \}
       \right) .
\label{mnatP4=01}
\end{align}
\end{description}

The following theorem gives two characterizations of \Mnat-concave set functions.

\begin{theorem}\label{THmconcavcardexc01}
Let $f: 2\sp{N} \to \Rminf$ be a set function with $\dom f \not= \emptyset$.

\noindent
{\rm (1)}
$f$ is \Mnat-concave
if and only if it satisfies
{\rm (P1$[\mathbb{B}]$)} and {\rm (P2$[\mathbb{B}]$)}.

\noindent
{\rm (2)}
$f$ is \Mnat-concave
if and only if it satisfies
{\rm (P2$[\mathbb{B}]$)}, {\rm (P3$[\mathbb{B}]$)}, and {\rm (P4$[\mathbb{B}]$)}.
\end{theorem}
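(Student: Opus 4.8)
The plan is to read \MncavS as a statement that splits, according to whether $|X|$ is greater than, equal to, or smaller than $|Y|$, into three separate exchange inequalities, and to recognise that (P4$[\mathbb{B}]$) is \emph{exactly} the restriction of \MncavS to the case $|X|>|Y|$, while (P2$[\mathbb{B}]$) and (P3$[\mathbb{B}]$) are the restrictions to $|X|=|Y|$ and $|X|<|Y|$ with the first alternative $f(X-i)+f(Y+i)$ deleted, hence a priori \emph{stronger} than what \MncavS demands there. Consequently the whole theorem hinges on one technical statement, the \emph{elimination of the first alternative}: if $f$ satisfies \MncavS and $|X|\le|Y|$ with $i\in X\setminus Y$, then already $f(X)+f(Y)\le\max_{j\in Y\setminus X}\{f(X-i+j)+f(Y+i-j)\}$. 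I would prove this by induction on $|X\setminus Y|$: apply \MncavS to $(X,Y,i)$; if the maximum is attained by the second alternative we are done, and if it is attained by $f(X-i)+f(Y+i)$ I would pass to the pair $(X-i,\,Y+i)$, whose first set is now strictly smaller than its second, and feed it back into the exchange machinery to trade the displaced element $i$ for some $j\in Y\setminus X$, turning the compound modification back into a single admissible swap.

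Granting this lemma, part (2) is quickly settled. For the ``if'' direction I would simply split on cardinalities: given $X,Y,i$ with $i\in X\setminus Y$, the case $|X|>|Y|$ is (P4$[\mathbb{B}]$) verbatim, the case $|X|=|Y|$ is (P2$[\mathbb{B}]$), and the case $|X|<|Y|$ is (P3$[\mathbb{B}]$); since (P2$[\mathbb{B}]$) and (P3$[\mathbb{B}]$) each imply the weaker bound \eqref{mnatconcavexc2}, \MncavS holds in every case. For the ``only if'' direction, (P4$[\mathbb{B}]$) is immediate, while (P2$[\mathbb{B}]$) and (P3$[\mathbb{B}]$) are precisely the content of the elimination lemma.

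Part (1) I would deduce from part (2). For the ``only if'' direction it remains to derive (P1$[\mathbb{B}]$) from \MncavS: assuming $|X|<|Y|$, I would apply the exchange property to the reversed pair $(Y,X)$ with some $i\in Y\setminus X$, noting that its first alternative $f(Y-i)+f(X+i)$ is already of the form required by (P1$[\mathbb{B}]$) with $j=i$, and handle the remaining alternative by induction on $|Y\setminus X|$. For the ``if'' direction I would show that (P1$[\mathbb{B}]$) and (P2$[\mathbb{B}]$) together force (P3$[\mathbb{B}]$) and (P4$[\mathbb{B}]$), after which part (2) delivers \Mnat-concavity. The starting move is to apply (P1$[\mathbb{B}]$) to the reversed pair to produce a single value-preserving step $f(X)+f(Y)\le f(X-j^*)+f(Y+j^*)$ that shrinks the cardinality gap, and to iterate this until the two sets become equicardinal, at which point (P2$[\mathbb{B}]$) performs the final swap involving the prescribed element $i$.

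The hard part will be exactly this last induction. Iterating the cardinality-shrinking step moves auxiliary elements $j^*$ across the two sets, so the inequalities it produces involve terms in which \emph{both} $j^*$ and the exchange element have been displaced, rather than the single-element swaps $f(X-i+j)+f(Y+i-j)$ demanded by (P3$[\mathbb{B}]$) and (P4$[\mathbb{B}]$). Collapsing these compound modifications back to an admissible elementary exchange, while maintaining a monovariant (such as $\bigl| |X|-|Y| \bigr|$ refined by $|X\triangle Y|$ or by the total function value) that strictly decreases along the recursion, is the delicate combinatorial bookkeeping on which the argument rests; I expect it is cleanest to phrase it contrapositively, taking a counterexample that minimises this monovariant and using the choice of a maximiser in (P1$[\mathbb{B}]$)/(P2$[\mathbb{B}]$) to contradict minimality.
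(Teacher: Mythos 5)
Your reading of the theorem's architecture is correct: (P2$[\mathbb{B}]$)--(P4$[\mathbb{B}]$) partition $\MncavS$ by cardinality, the ``if'' directions follow by splitting on $|X|$ versus $|Y|$, and everything reduces to (i) eliminating the first alternative of $\MncavS$ when $|X|\le|Y|$ and (ii) deriving (P3$[\mathbb{B}]$),(P4$[\mathbb{B}]$) from (P1$[\mathbb{B}]$),(P2$[\mathbb{B}]$). But the proposal never actually proves these reductions, and the inductions you sketch fail at precisely the point you flag. Concretely: (a) in your elimination lemma, once $\MncavS$ gives $f(X)+f(Y)\le f(X-i)+f(Y+i)$, the element $i$ sits on the wrong side of the new pair $(X-i,\,Y+i)$; applying $\MncavS$ to $(Y+i,\,X-i)$ with $i$ returns either the circular inequality $f(Y)+f(X)$ or an exchange $f(Y+j)+f(X-j)$ with $j\in X\setminus Y$ moving an element in the wrong direction, while applying the inductive hypothesis to $(X-i,\,Y+i)$ produces terms like $f(X-i-i'+j')+f(Y+i+i'-j')$ with two displaced elements, which cannot be recombined into the required single-swap form --- no mechanism for this collapse is given. (b) In your derivation of (P1$[\mathbb{B}]$), if the second alternative of $\MncavS$ applied to $(Y,X)$ wins, the resulting pair $(X',Y')=(X+i-j,\,Y-i+j)$ satisfies $|Y'\setminus X'|=|Y\setminus X|$ and $|X'\setminus Y'|=|X\setminus Y|$, so induction on $|Y\setminus X|$ (or on the symmetric difference) does not terminate.

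The ingredient that makes the paper's argument close, and which is absent from your plan, is a perturbation by a price vector. In each hard lemma (Lemmas~\ref{LMmnatexcdiffcard01}, \ref{LMmnatexcequicard01}, \ref{LMp12toP3}, \ref{LMp12toP4}) one takes a violating pair minimizing $|X\bigtriangleup Y|$ and then defines $p\in\RR^{N}$ (with an auxiliary $\varepsilon>0$) so that $f_{p}=f+p$ satisfies $f_{p}(X-i_{*}+j)=f_{p}(X)$ for every feasible $j$ and $f_{p}(Y+i_{*}-j)<f_{p}(Y)$ for all $j\in Y\setminus X$. Only after this normalization does ``choose the maximizer $j_{0}$'' yield a strictly closer pair that still violates the property; without it, the quantities $f(X-i_{*}+j)-f(X)$ vary with $j$, so maximizer comparisons on the $Y$-side do not control the $X$-side terms and the minimality argument you gesture at in your final paragraph does not go through. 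Moreover, the paper's route from $\MncavS$ to (P1$[\mathbb{B}]$),(P2$[\mathbb{B}]$) passes through the local exchange property $\MncavlocS$ --- whose condition (L3$[\mathbb{B}]$) requires its own nontrivial four-inequality summation argument (Proposition~\ref{PRmconcavlocexc01onlyif}) --- together with connectedness properties of $\dom f$ (Proposition~\ref{PRmnsetconnected01}); none of this machinery, or a substitute for it, appears in the proposal. So while the decomposition and the easy directions are right, the technical core of the proof is missing.
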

\begin{proof}
The proof is given in Section~\ref{SCproofmnatexccardloc}.
\end{proof}

\begin{remark} \rm  \label{RMmconcavcardexc01bib}
Theorem~\ref{THmnatcavP1hered01}
and Theorem~\ref{THmconcavcardexc01}(1) are explicit in 
Murota--Shioura \cite{MS18mnataxiom=valmat}
as Corollary 1.4 and Corollary 1.3, respectively.
Theorem~\ref{THmconcavcardexc01}(2) is an adaptation 
of Theorem 2.1 in \cite{MS18mnataxiom=valmat} to set functions. 
\finbox
\end{remark}

Other types of exchange properties for \Mnat-concavity
shall be treated later,
local exchange properties in Section~\ref{SCexchange01loc}
and multiple exchange properties in Section~\ref{SCexchange01mult}.


\subsection{Local exchange properties characterizing \Mnat-concave functions}
\label{SCexchange01loc}



\Mnat-concavity can be characterized by local exchange properties.
The conditions 
(L1$[\mathbb{B}]$)--(L3$[\mathbb{B}]$) 
below are indeed ``local'' in the sense that they require 
the exchangeability of the form (\ref{mnatconcavexc2}) only for 
$(X,Y)$ with $\max(| X \setminus Y | , | Y \setminus X |) \leq 2$.

\begin{description}
\item[{\bf (L1$[\mathbb{B}]$)}] 
For any $Z \subseteq N$ and distinct $i,j \in N \setminus Z$,
we have
\begin{align}
f( Z + i + j ) + f( Z ) \leq f(Z + i) + f(Z + j) ;
\label{mnatconcavexc20loc}
\end{align}

\item[{\bf (L2$[\mathbb{B}]$)}] 
For any $Z \subseteq N$ and distinct $i,j,k \in N \setminus Z$,
we have
\begin{align}
& f( Z + i + j ) + f( Z + k)  
\notag \\  & 
\quad \leq 
\max\left[ f(Z + i + k) + f(Z + j), \  f(Z + j + k) + f(Z + i) \right] ;
\label{mnatconcavexc21loc}
\end{align}

\item[{\bf (L3$[\mathbb{B}]$)}] 
For any $Z \subseteq N$ and distinct $i,j,k,l \in N \setminus Z$,
we have
\begin{align}
& 
 f( Z + i + j ) + f( Z + k + l)  
\notag \\  & 
\quad \leq 
\max\left[  f(Z + i + k) + f(Z + j +l ), \   f(Z + j + k) + f(Z + i + l) \right].  
\label{mnatconcavexc22loc}
\end{align}
\end{description}

\begin{remark} \rm  \label{RMmlocnonunimax}
Condition (L2$[\mathbb{B}]$)
is equivalent to saying that,
for any $Z \subseteq N$
 and distinct $i,j,k \not\in Z$, the maximum value in 
$\{ 
f( Z + i + j ) + f( Z + k), \
f(Z + i + k) + f(Z + j), \ f(Z + j + k) + f(Z + i) \}$
is attained by at least two elements therein.
Similarly,  (L3$[\mathbb{B}]$)
is equivalent to saying that,
for any $Z \subseteq N$ 
and distinct $i,j,k,l \not\in Z$, the maximum value in 
$\{  f( Z + i + j ) + f( Z + k + l), \ 
 f(Z + i + k) + f(Z + j +l ), \   f(Z + j + k) + f(Z + i + l) \}$
is attained by at least two elements therein.
\finbox
\end{remark}

The set of the above three conditions will be referred to as $\MncavlocS$. 
That is, 
\begin{description}
\item[\MncavlocSb] 
The conditions (L1$[\mathbb{B}]$), (L2$[\mathbb{B}]$), and  (L3$[\mathbb{B}]$) hold.
\end{description}
\noindent
The following is a statement expected naturally.

\begin{propositionM} \label{PRmconcavlocexc01onlyif}
An \Mnat-concave set function satisfies $\MncavlocS$.
\end{propositionM}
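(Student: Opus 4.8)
The plan is to derive each of the three conditions of \MncavlocS directly from the global exchange property \MncavS, i.e.\ inequality \eqref{mnatconcavexc2}, by a suitable choice of the pair $(X,Y)$ and of the distinguished element, using the conventions on $-\infty$ and on the empty maximum. The distinctness hypotheses in {\rm (L1$[\mathbb{B}]$)}--{\rm (L3$[\mathbb{B}]$)} guarantee that the chosen element indeed lies in $X\setminus Y$, so the substitutions are legitimate; moreover, if any set involved lies outside $\dom f$ the relevant left-hand side is $-\infty$ and the claim is trivial, so the effective domain causes no difficulty.

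For {\rm (L1$[\mathbb{B}]$)} I would apply \eqref{mnatconcavexc2} to $X=Z+i+j$ and $Y=Z$ with distinguished element $i$. Here $Y\setminus X=\emptyset$, so the inner maximum is taken over the empty set and equals $-\infty$; the inequality collapses to $f(Z+i+j)+f(Z)\le f(Z+j)+f(Z+i)$, which is exactly \eqref{mnatconcavexc20loc}. For {\rm (L2$[\mathbb{B}]$)} I would apply \eqref{mnatconcavexc2} to $X=Z+i+j$ and $Y=Z+k$, again with distinguished element $i$. Then $X\setminus Y=\{i,j\}$ and $Y\setminus X=\{k\}$, so the right-hand side of \eqref{mnatconcavexc2} is $\max\bigl(f(Z+j)+f(Z+i+k),\ f(Z+j+k)+f(Z+i)\bigr)$, and these two terms are precisely the two alternatives of \eqref{mnatconcavexc21loc}. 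Note that here the ``option (i)'' term $f(X-i)+f(Y+i)$ is itself one of the admissible terms, so nothing needs to be discarded; these two conditions are thus mere specializations.

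The genuine difficulty is {\rm (L3$[\mathbb{B}]$)}. Applying \eqref{mnatconcavexc2} naively to $X=Z+i+j$, $Y=Z+k+l$ with distinguished element $i$ gives the bound $\max\bigl(f(Z+j)+f(Z+i+k+l),\ f(Z+j+k)+f(Z+i+l),\ f(Z+j+l)+f(Z+i+k)\bigr)$. The last two terms are exactly the two alternatives required in \eqref{mnatconcavexc22loc}, but the ``option (i)'' term $f(Z+j)+f(Z+i+k+l)$ is \emph{cardinality-unbalanced} (a set of size $|Z|+1$ paired with one of size $|Z|+3$) and must be eliminated. I expect this elimination to be the only real obstacle. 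It cannot be handled by {\rm (L1$[\mathbb{B}]$)} or {\rm (L2$[\mathbb{B}]$)} alone, because the pair $(Z+j,\ Z+i+k+l)$ is ``crossing'': $Z+j$ is not contained in $Z+i+k+l$, so no single local move relates the two. The decisive observation is that in {\rm (L3$[\mathbb{B}]$)} the sets $X=Z+i+j$ and $Y=Z+k+l$ are \emph{equi-cardinal}, $|X|=|Y|=|Z|+2$.

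I would therefore invoke the equi-cardinal exchange property {\rm (P2$[\mathbb{B}]$)}, which holds for every \Mnat-concave function by Theorem~\ref{THmconcavcardexc01}. Applying \eqref{mnatP2=01} to this $X,Y$ with distinguished element $i$ yields at once $f(Z+i+j)+f(Z+k+l)\le\max\bigl(f(Z+j+k)+f(Z+i+l),\ f(Z+j+l)+f(Z+i+k)\bigr)$, which is precisely \eqref{mnatconcavexc22loc}; the point is exactly that {\rm (P2$[\mathbb{B}]$)} carries no ``option (i)'' term. This pinpoints where strength beyond the two easy conditions is used: {\rm (L1$[\mathbb{B}]$)} and {\rm (L2$[\mathbb{B}]$)} are substitutions, while {\rm (L3$[\mathbb{B}]$)} rests on the fact that for equi-cardinal sets the first alternative of the exchange can always be avoided. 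Should one wish to avoid quoting {\rm (P2$[\mathbb{B}]$)}, the task reduces to proving this equi-cardinal refinement in place, which is itself the substantive step and is naturally attacked by a cardinality-gap reduction on $|X\setminus Y|$ applied to the unbalanced pair $(Z+i+k+l,\ Z+j)$.
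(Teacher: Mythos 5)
Your handling of (L1$[\mathbb{B}]$) and (L2$[\mathbb{B}]$) is correct and agrees with the paper: both are direct specializations of \MncavS, and your observation that for (L2$[\mathbb{B}]$) the ``option (i)'' term is itself one of the two admissible alternatives is exactly right. The gap is in (L3$[\mathbb{B}]$), which is the only non-trivial part. You invoke (P2$[\mathbb{B}]$) via Theorem~\ref{THmconcavcardexc01}, but within this paper's logical structure that theorem is not available here: its proof in Section~\ref{SCproofmnatexccardloc} derives (P2$[\mathbb{B}]$) from the local property $\MncavlocS$ (Lemma~\ref{LMmnatexcequicard01} uses (L3$[\mathbb{B}]$) itself, e.g.\ in its Claims 2 and 3), and $\MncavlocS$ is obtained from \Mnat-concavity precisely by Proposition~\ref{PRmconcavlocexc01onlyif}, the statement you are proving. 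So quoting (P2$[\mathbb{B}]$) is circular; the paper flags exactly this trap in a footnote to its own proof. Your closing remark---that one could instead prove the equi-cardinal refinement ``in place'' by a cardinality-gap reduction---is only a gesture at the missing content, not an argument, and it is that content which constitutes essentially the whole proposition.

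For contrast, the paper's proof of (L3$[\mathbb{B}]$) is self-contained and of a different character: writing $\alpha_{1}=f(Z+i)$, $\alpha_{12}=f(Z+i+j)$, etc.\ (with $i,j,k,l$ renamed $1,2,3,4$), one assumes for contradiction that $A=\alpha_{12}+\alpha_{34}>\max\{\alpha_{13}+\alpha_{24},\,\alpha_{14}+\alpha_{23}\}$. Applying \MncavS to the pair $(Z+1+2,\,Z+3+4)$ with distinguished element $2$, the two balanced terms on the right are excluded by the contradiction hypothesis, forcing the unbalanced branch $A\le\alpha_{1}+\alpha_{234}$; symmetrically $A\le\alpha_{2}+\alpha_{134}$, $A\le\alpha_{3}+\alpha_{124}$, $A\le\alpha_{4}+\alpha_{123}$. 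Applying \MncavS to pairs such as $(Z+1+2+3,\,Z+1)$, where the inner maximum is empty, gives $\alpha_{1}+\alpha_{123}\le\alpha_{12}+\alpha_{13}$ and three analogues. Summing the four bounds on $A$, regrouping, and substituting yields $4A\le(\alpha_{12}+\alpha_{34})+(\alpha_{14}+\alpha_{23})+2(\alpha_{13}+\alpha_{24})<4A$, a contradiction. Some argument of this kind---using only \MncavS, not (P2$[\mathbb{B}]$)---is what your proof needs to supply for (L3$[\mathbb{B}]$).
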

\begin{proof}
The conditions
(L1$[\mathbb{B}]$) and (L2$[\mathbb{B}]$)
are immediate consequences of the exchange property $\MncavS$.
The derivation of (L3$[\mathbb{B}]$) below
demonstrates a typical reasoning about \Mnat-concavity.%
\footnote{
(L3$[\mathbb{B}]$) is a special case of (P2$[\mathbb{B}]$)
that appeared in Theorem~\ref{THmconcavcardexc01}.
However, we need to prove (L3$[\mathbb{B}]$) without using (P2$[\mathbb{B}]$),
since (P2$[\mathbb{B}]$) is not proved yet and moreover, the 
proof of (P2$[\mathbb{B}]$) given in Section~\ref{SCproofmnatexccardloc}
relies on this Proposition~\ref{PRmconcavlocexc01onlyif}.
} 

To simplify notations we 
write
$\alpha_{i} = f(Z + i)$,
$\alpha_{ij} = f(Z + i + j)$, and
$\alpha_{ijk} = f(Z + i + j + k)$, etc., 
and assume $i=1$, $j=2$, $k=3$, $l=4$ in 
(L3$[\mathbb{B}]$)
to obtain
\begin{equation} \label{mnatconcavexc22locAlpha}
 \alpha_{12}+\alpha_{34} \leq 
 \max\{\alpha_{13}+\alpha_{24},\alpha_{14}+\alpha_{23}\}.
\end{equation}
To prove this by contradiction, suppose that
\begin{equation}\label{vmloc22prf1}
\alpha_{12} + \alpha_{34} > 
       \max\{\alpha_{13} + \alpha_{24}, \alpha_{14} + \alpha_{23}\}.
\end{equation}
With the notation $A=\alpha_{12} + \alpha_{34}$
we obtain
\begin{align}
A = \alpha_{12} + \alpha_{34}
& \leq 
\max \{\alpha_{1}+\alpha_{234}, \alpha_{13} + \alpha_{24},
 \alpha_{14} + \alpha_{23}\} 
=\alpha_{1}+\alpha_{234}
\label{vmloc22prf2}
\end{align}
from  \MncavS (with $i=2$) and (\ref{vmloc22prf1}).
Similarly, we have
\begin{equation}\label{vmloc22prf3}
A \leq \alpha_{2}+\alpha_{134},
\qquad
A \leq \alpha_{3}+\alpha_{124},
\qquad
A \leq \alpha_{4}+\alpha_{123}.
\end{equation}
On the other hand, we have
\begin{align}
&\alpha_{1} + \alpha_{123} \leq \alpha_{12} + \alpha_{13},
\qquad 
\alpha_{2} + \alpha_{234} \leq \alpha_{23} + \alpha_{24}, 
\label{vmloc22prf42} 
\\
&\alpha_{3} + \alpha_{134} \leq \alpha_{13} + \alpha_{34},
\qquad 
\alpha_{4} + \alpha_{124} \leq \alpha_{14} + \alpha_{24}
\label{vmloc22prf44} 
\end{align}
by $\MncavS$.
By adding the four inequalities 
in (\ref{vmloc22prf2}) and (\ref{vmloc22prf3}) 
and using the inequalities 
in (\ref{vmloc22prf42}), (\ref{vmloc22prf44}), 
 and (\ref{vmloc22prf1}), we obtain
\begin{align*}
4A &\leq 
(\alpha_{1} + \alpha_{234}) 
+(\alpha_{2} + \alpha_{134}) 
+(\alpha_{3} + \alpha_{124}) 
+(\alpha_{4} + \alpha_{123}) 
\notag \\ & =
(\alpha_{1} + \alpha_{123}) 
+(\alpha_{2} + \alpha_{234}) 
+(\alpha_{3} + \alpha_{134}) 
+(\alpha_{4} + \alpha_{124}) 
\notag \\ & \leq
(\alpha_{12} + \alpha_{13}) 
+(\alpha_{23} + \alpha_{24}) 
+(\alpha_{13} + \alpha_{34}) 
+(\alpha_{14} + \alpha_{24}) 
\notag \\ & =
(\alpha_{12} + \alpha_{34}) 
+(\alpha_{23} + \alpha_{14}) 
+ 2(\alpha_{13} + \alpha_{24}) 
\notag \\ & 
< 4A .
\end{align*}
This is a contradiction. Thus (L3$[\mathbb{B}]$) is shown.
\end{proof}

The converse of Proposition~\ref{PRmconcavlocexc01onlyif} is also true,
that is, the local exchange property $\MncavlocS$ characterizes \Mnat-concavity 
under some assumption on the effective domain of the function.

\begin{theorem}\label{THmnatcavlocexc01}
A set function  $f: 2\sp{N} \to \Rminf$ 
is \Mnat-concave
if and only if the effective domain
$\dom f$ is an \Mnat-convex family and 
$\MncavlocS$  is satisfied.
\end{theorem}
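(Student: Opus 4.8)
The plan is to prove the two implications separately, with essentially all of the work concentrated in the ``if'' direction, the ``only if'' direction being already available from the results preceding the statement.

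For the \textbf{only if} direction, suppose $f$ is \Mnat-concave. Then $\mathcal{F} = \dom f$ satisfies \BnvexS and is therefore an \Mnat-convex family, as recorded in Section~\ref{SCmncavsetfnDef}, and $\MncavlocS$ holds by Proposition~\ref{PRmconcavlocexc01onlyif}. Thus both required properties hold and nothing further is needed.

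For the \textbf{if} direction, assume $\mathcal{F} = \dom f$ is an \Mnat-convex family and that $\MncavlocS$ holds; I must derive the global property $\MncavS$. I may restrict attention to $X, Y \in \mathcal{F}$ and $i \in X \setminus Y$, since otherwise the left-hand side of \eqref{mnatconcavexc2} is $-\infty$. I would argue by induction on the distance $|X \setminus Y| + |Y \setminus X|$. For the base cases $\max(|X\setminus Y|,|Y\setminus X|) \le 2$, writing $Z = X \cap Y$ and listing $X \setminus Y$ and $Y \setminus X$ explicitly, the inequality \eqref{mnatconcavexc2} reduces to a single local condition: the configurations of sizes $(1,0)$ and $(1,1)$ are trivial (one swap term already equals the left-hand side), $(2,0)$ is exactly (L1$[\mathbb{B}]$), both $(2,1)$ and $(1,2)$ are instances of (L2$[\mathbb{B}]$), and $(2,2)$ is (L3$[\mathbb{B}]$). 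For the inductive step $\max(|X\setminus Y|,|Y\setminus X|)\ge 3$, I would argue by contradiction: assuming \eqref{mnatconcavexc2} fails for $(X,Y,i)$ means $f(X)+f(Y)$ strictly exceeds $f(X-i)+f(Y+i)$ and every $f(X-i+j)+f(Y+i-j)$ with $j\in Y\setminus X$. Applying the induction hypothesis to strictly closer pairs such as $(X-i+j,Y)$, $(X,Y+i-j)$, or $(X-i,Y)$, together with the local conditions, produces a family of valid inequalities whose weighted sum contradicts these strict inequalities, exactly in the spirit of the averaging argument used in the proof of Proposition~\ref{PRmconcavlocexc01onlyif} to establish (L3$[\mathbb{B}]$).

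The \emph{main obstacle} is the inductive step, and it carries two intertwined difficulties. First, every intermediate set appearing as an argument of $f$ must be kept inside $\dom f$, so that its value is finite and the generated inequalities are meaningful; this is precisely where the hypothesis that $\mathcal{F}$ is \Mnat-convex enters, since \BnvexS guarantees that from $X$ a distance-decreasing move toward $Y$ staying in $\mathcal{F}$ is always available, and the case analysis must be organized according to which alternative of \BnvexS is realized. Second, one must combine the induction-hypothesis inequalities so that the two alternatives of the global property, namely the pure deletion $f(X-i)+f(Y+i)$ and the single swap $f(X-i+j)+f(Y+i-j)$, are faithfully reproduced for the fixed element $i$; tracking which of these two alternatives each smaller exchange yields, and assembling them into one move for $i$, is the delicate bookkeeping at the heart of the argument.
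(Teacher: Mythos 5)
Your ``only if'' direction and your base cases are fine: the reduction of the configurations $(1,0),(1,1),(2,0),(2,1),(1,2),(2,2)$ to (L1$[\mathbb{B}]$)--(L3$[\mathbb{B}]$) is correct and matches what the local conditions are designed to capture. But the inductive step --- which is the entire content of the theorem --- is not actually carried out; you name it as ``the main obstacle'' and describe the bookkeeping as ``the heart of the argument'' without doing it. Worse, the route you indicate does not work as stated. The averaging argument of Proposition~\ref{PRmconcavlocexc01onlyif} that you invoke as a model derives its key inequalities (e.g.\ $A \leq \alpha_{1}+\alpha_{234}$ in \eqref{vmloc22prf2}) by applying the exchange property \emph{to the very pair under consideration} with a different pivot element; in an induction on $|X \bigtriangleup Y|$ those instances sit at the \emph{same} distance as the pair you are trying to handle, so they are not covered by the induction hypothesis --- using them is circular. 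Moreover, each application of the induction hypothesis yields an inequality whose right-hand side is a maximum over many exchange terms, and you have no control over which term attains it; summing such inequalities to contradict finitely many strict inequalities requires extracting specific branches from these maxima, and nothing in your sketch pins those branches down.

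The paper's proof (Section~\ref{SCproofmnatexccardloc}) overcomes exactly these two problems with machinery your proposal lacks. First, $\MncavS$ is decomposed into cardinality-stratified properties (P1$[\mathbb{B}]$)--(P4$[\mathbb{B}]$), and each is attacked separately by a minimal-counterexample argument. Second --- and this is the decisive idea --- one perturbs $f$ by a linear price vector $p$, chosen so that all relevant exchange values are normalized, $f_{p}(X-i_{*}+j) = f_{p}(X)$, while $f_{p}(Y+i_{*}-j) < f_{p}(Y)$ holds strictly (Claim~1 of Lemmas~\ref{LMmnatexcdiffcard01} and \ref{LMmnatexcequicard01}); since linear terms cancel on both sides of every exchange inequality, membership in the counterexample set $\mathcal{D}$ is unaffected. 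The tie-breaking by $p$ then makes a canonical choice of the next set $Y'$ (a maximizer of $f_{p}$ over admissible single exchanges), the connectivity properties \eqref{Fconnected<1}--\eqref{Fconnected<2} of the domain (Proposition~\ref{PRmnsetconnected01}, where your hypothesis that $\dom f$ is \Mnat-convex enters) guarantee such a $Y'$ exists in $\dom f$, and the local conditions (L1$[\mathbb{B}]$)--(L3$[\mathbb{B}]$) show $(X,Y')$ is again a counterexample closer to $X$, contradicting minimality. Finally (P3$[\mathbb{B}]$) and (P4$[\mathbb{B}]$) are derived from (P1$[\mathbb{B}]$) and (P2$[\mathbb{B}]$) by the same perturbation technique, and (P2$[\mathbb{B}]$)$\wedge$(P3$[\mathbb{B}]$)$\wedge$(P4$[\mathbb{B}]$) covers all cardinality cases of $\MncavS$. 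Without the perturbation device (or some substitute for it), your induction cannot be closed, so the proposal has a genuine gap rather than being a complete alternative proof.
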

\begin{proof}
The ``only if'' part is already shown in Proposition~\ref{PRmconcavlocexc01onlyif}.
The proof of the ``if'' part is given in Section~\ref{SCproofmnatexccardloc}.
\end{proof}

If the effective domain contains the empty set,
in addition to being an \Mnat-convex family,
we can dispense with the third condition (L3$[\mathbb{B}]$) in $\MncavlocS$,
as is stated in the following theorem.
Recall that an \Mnat-convex family containing the empty set is exactly 
the family of independent sets of a matroid.

\begin{theorem}\label{THmnatcavlocexc01hered}
Let $f: 2\sp{N} \to \Rminf$
be a set function such that $\dom f$ is 
the family of independent sets of a matroid
(an \Mnat-convex family containing the empty set).
Then $f$  is \Mnat-concave
if and only if it satisfies
{\rm (L1$[\mathbb{B}]$)} and {\rm (L2$[\mathbb{B}]$)}.
\end{theorem}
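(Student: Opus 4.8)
The ``only if'' direction is immediate: an \Mnat-concave function satisfies (L1$[\mathbb{B}]$) and (L2$[\mathbb{B}]$) by Proposition~\ref{PRmconcavlocexc01onlyif}. So the whole task is the ``if'' direction, and since the family of independent sets of a matroid is an \Mnat-convex family, Theorem~\ref{THmnatcavlocexc01} reduces everything to a single implication: assuming $\dom f$ is hereditary (downward closed) and that (L1$[\mathbb{B}]$) and (L2$[\mathbb{B}]$) hold, I must derive the remaining local condition (L3$[\mathbb{B}]$). The plan is to prove this implication.

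First I would localize and gauge away the singletons. Fix $Z$ and distinct $i,j,k,l\notin Z$; if $Z+i+j$ or $Z+k+l$ is outside $\dom f$ the left-hand side of (L3$[\mathbb{B}]$) is $-\infty$ and there is nothing to prove, so assume both lie in $\dom f$. By heredity, $Z$ and all the singletons $Z+p$ ($p\in\{i,j,k,l\}$) then lie in $\dom f$ and carry finite values. Writing $h(S)=f(Z\cup S)$ for $S\subseteq\{i,j,k,l\}$, the conditions (L1$[\mathbb{B}]$) and (L2$[\mathbb{B}]$) for $f$ at base $Z$ become the same conditions for $h$ at base $\emptyset$, and (L3$[\mathbb{B}]$) at $(Z;i,j,k,l)$ becomes (L3$[\mathbb{B}]$) for $h$ at $(\emptyset;i,j,k,l)$. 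The key point is that all three conditions are invariant under a ``potential'' transformation $h(S)\mapsto h(S)-\sum_{p\in S}\lambda_{p}-\mu$, because in each inequality every element of $\{i,j,k,l\}$ occurs exactly once on both sides. Choosing $\mu$ and the $\lambda_{p}$ so that $h(\emptyset)=0$ and $h(\{p\})=0$ for every $p$, I reach a normalized instance.

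After normalization, (L1$[\mathbb{B}]$) reads $h(\{p,q\})\le 0$, while (L2$[\mathbb{B}]$), in the ``no unique maximizer'' form of Remark~\ref{RMmlocnonunimax}, says exactly that for every triple $\{p,q,r\}\subseteq\{i,j,k,l\}$ the largest of the three pair-values $h(\{p,q\}),h(\{p,r\}),h(\{q,r\})$ is attained at least twice; this is the three-point (ultrametric-type) condition on the six numbers $h(\{p,q\})$. The goal (L3$[\mathbb{B}]$), again via Remark~\ref{RMmlocnonunimax}, is that the largest of the three matching sums $h(\{i,j\})+h(\{k,l\})$, $h(\{i,k\})+h(\{j,l\})$, $h(\{i,l\})+h(\{j,k\})$ is attained at least twice --- the classical four-point condition. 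Thus the problem collapses to the standard fact that the three-point condition implies the four-point condition, which I would prove by selecting a globally largest pair-value and chasing the triangle conditions around $K_{4}$ to force a second matching sum up to the maximum.

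The main obstacle is bookkeeping with the $-\infty$ entries, i.e.\ the cross pairs $Z+p+q$ that leave $\dom f$ when the matroid has small local rank (for instance when $Z+i+j$ and $Z+k+l$ are bases of a rank-two minor, so that every three-element extension is dependent and the ``triple'' sets used in the proof of Proposition~\ref{PRmconcavlocexc01onlyif} are simply unavailable). Here the matroid hypothesis does real work: the equicardinal subfamily $\{X\in\dom f:\ |X|=|Z|+2\}$ is an M-convex family and hence obeys the symmetric exchange \BvexS, which guarantees that at least one matching sum is finite and, more generally, that each triangle entering the chase contains enough finite entries for the argument to close; meanwhile (L2$[\mathbb{B}]$) already forbids a triangle with exactly one finite pair. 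Once (L3$[\mathbb{B}]$) is established, $\dom f$ is \Mnat-convex and all of $\MncavlocS$ holds, so Theorem~\ref{THmnatcavlocexc01} yields the \Mnat-concavity of $f$.
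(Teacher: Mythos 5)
Your proposal is correct, and its skeleton coincides with the paper's: the ``only if'' part is Proposition~\ref{PRmconcavlocexc01onlyif}, and the ``if'' part reduces via Theorem~\ref{THmnatcavlocexc01} (the domain being \Mnat-convex by hypothesis) to the single implication that (L2$[\mathbb{B}]$) plus heredity of $\dom f$ yields (L3$[\mathbb{B}]$). The difference lies in how that implication is run. The paper keeps the singleton values $f(Z+p)$ in play: it applies (L2$[\mathbb{B}]$) to the four triples, fixes one inequality by the symmetry $1\leftrightarrow 2$, and settles four cases by adding inequalities. Your gauge normalization (legitimate, since heredity makes $f(Z)$ and the $f(Z+p)$ finite, and all three conditions are invariant under the potential shift) removes the singletons entirely; after that, (L2$[\mathbb{B}]$) is the three-point condition and (L3$[\mathbb{B}]$) the four-point condition, via Remark~\ref{RMmlocnonunimax}, and the implication between them is proved by the same sort of case analysis: classify the subgraph of $K_{4}$ formed by the pairs attaining the overall maximum $M$ of the six pair values; the triangle condition forces it to be a star $K_{1,3}$, a $4$-cycle, or to have at least five edges, and in each case the maximal matching sum is attained at least twice. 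This is a cleaner packaging of the same elementary content, with the small dividend of making the rank-two (ultrametric-to-four-point) nature of the statement visible.

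Your final paragraph, however, rests on a misconception. The matroid exchange axiom --- \BvexS on the equicardinal slice $\{X\in\dom f : |X|=|Z|+2\}$ --- is never needed for the derivation of (L3$[\mathbb{B}]$); downward closure is the only property of $\dom f$ used there, both in the paper's proof and in a correct execution of yours. The three-point-to-four-point chase closes even with $-\infty$ entries: if, say, $M$ sits on a star at center $1$ and $h_{23}=h_{24}=h_{34}=-\infty$, then all three matching sums equal $-\infty$ and (L3$[\mathbb{B}]$) holds by the convention $-\infty\leq-\infty$; if those three values are not all $-\infty$, the triangle condition on $\{2,3,4\}$ forces their maximum to be attained twice, hence the maximal matching sum as well; and in the $4$-cycle and $\geq 5$-edge configurations two matching sums equal $2M$. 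So the worry that ``each triangle must contain enough finite entries'' is unfounded, and the appeal to \BvexS (a true statement here, but superfluous) can simply be deleted; what remains is a complete proof.
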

\begin{proof}
The proof is given in Section~\ref{SCproofmnatlocexc01hered}.
\end{proof}

\begin{remark} \rm  \label{RMmnatlocdomcond}
In Theorems \ref{THmnatcavlocexc01} and \ref{THmnatcavlocexc01hered}
the assumptions on $\dom f$ are indispensable.
For example, 
let $N=\{ 1,2,\ldots, 6 \}$
and define 
$f(\{ 1,2,3 \}) = f(\{ 4,5,6 \})=0$,
and $f(X)=-\infty$ for $X \not= \{ 1,2,3 \}, \{ 4,5,6 \}$.
This function $f$ is not \Mnat-concave, since 
$\dom f = \{ \{ 1,2,3 \}, \{ 4,5,6 \}  \}$ is not an \Mnat-convex family. 
However, $f$ satisfies the conditions 
(L1$[\mathbb{B}]$),
(L2$[\mathbb{B}]$), and (L3$[\mathbb{B}]$)
in a trivial manner, since the left-hand sides of 
(\ref{mnatconcavexc20loc})--(\ref{mnatconcavexc22loc})
are  always equal to $-\infty$.
\finbox
\end{remark}

\begin{remark} \rm  \label{RMmnatcavlocexc01bib}
Theorem~\ref{THmnatcavlocexc01} is due to Murota--Shioura \cite{MS18mnataxiom=valmat};
see also Murota \cite{Mstein96=valmat, Mdcasiam=valmat}, Murota--Shioura \cite{MS99gp=valmat}.
Theorem~\ref{THmnatcavlocexc01hered}
is due to 
Reijnierse--van Gallekom--Potters
\cite[Theorem~10]{RGP02=valmat}
(also M{\"u}ller \cite[Theorem~13.5]{Mul06=valmat},
Shioura--Tamura \cite[Theorem~6.5]{ST15jorsj=valmat}).
\finbox
\end{remark}

\begin{remark} \rm  \label{RMmnatlocweakcond}
In Theorem~\ref{THmnatcavlocexc01},
the assumption that $\dom f$ should be an \Mnat-convex family
can be weakened. 
See Proposition~\ref{PRmnatcavlocexcW01} in 
Section~\ref{SCproofmnatexccardloc}.
\finbox
\end{remark}


\section{Proofs about Exchange Properties of \Mnat-concave Functions}
\label{SCmncavexcprf}


Theorems about exchange properties 
stated in Section~\ref{SCmncavsetfn}  
are proved in this section.
The proofs are ordered in accordance with logical dependence, that is,
if the proof of Theorem~A relies on Theorem~B, 
Theorem~B is proved before it is used.

\subsection{Proof of Theorems \ref{THmconcavcardexc01}~and \ref{THmnatcavlocexc01}}
\label{SCproofmnatexccardloc}

In this section we prove the equivalence of the exchange properties
$\MncavS$, $\MncavlocS$, and some combinations of
(P1$[\mathbb{B}]$) to (P4$[\mathbb{B}]$)
stated in Theorems \ref{THmconcavcardexc01} and \ref{THmnatcavlocexc01}.
The proof is based on Murota--Shioura \cite{MS18mnataxiom=valmat}
and can be summarized as follows:
\begin{center}
\begin{tabular}{|ccc|}
\hline
\MncavS & 
{$\stackrel{\mbox{\small Prop.~\ref{PRmconcavlocexc01onlyif}}}{\Longrightarrow}$}
& $\MncavlocS$ $+$ DOM
 \\
 {\small Obviously}
 $\Uparrow$
\phantom{\small Obviously}
&&
\phantom{\small Lem \ref{LMmnatexcdiffcard01}, \ref{LMmnatexcequicard01}}
$\Downarrow$ 
{\small Lem \ref{LMmnatexcdiffcard01}, \ref{LMmnatexcequicard01}}
\\
(P2$[\mathbb{B}]$),
(P3$[\mathbb{B}]$),
(P4$[\mathbb{B}]$)
&
{$\stackrel{\mbox{\small Lem \ref{LMp12toP3}, \ref{LMp12toP4}}}{\Longleftarrow}$} 
& 
(P1$[\mathbb{B}]$),
(P2$[\mathbb{B}]$)
\\ \hline
\end{tabular}
\end{center}
where ``DOM'' denotes some conditions on the effective domain to be specfied below.
Recall that 
(P1$[\mathbb{B}]$) to (P4$[\mathbb{B}]$) 
are defined in Section~\ref{SCexchange01glob},
and  \MncavlocS 
consists of three local exchange properties
(L1$[\mathbb{B}]$), (L2$[\mathbb{B}]$), and (L3$[\mathbb{B}]$) 
in Section~\ref{SCexchange01loc}.
We use notation 
\[
f_{p}(X) = f[+p](X) =  f(X) + \sum_{i \in X} p_{i}
\]
for $p \in \mathbb{R}\sp{N}$ and $X \subseteq N$.


We start with some property of an \Mnat-convex family.

\begin{propositionM} \label{PRmnsetconnected01}
A set family $\mathcal{F}$ satisfying $\BnvexS$ 
has the following properties:%
\footnote{
(\ref{Fconnected<2}) is not implied by (\ref{Fconnected<1}) and (\ref{Fconnected=}).
For example,
$\mathcal{F} = \{ \{ 1,2 \}, \{ 1,4 \}, \{ 1,5 \}, \{ 4,5 \}$, $\{ 3,4,5 \} \}$
satisfies (\ref{Fconnected<1}) and (\ref{Fconnected=}), and not (\ref{Fconnected<2}).
} 
\begin{align} 
& \bullet
\mbox{
If $X, Y \in \mathcal{F}$ and $|X| < |Y|$, 
there exists $j \in Y \setminus X$ such that $Y - j \in \mathcal{F}$;
}
\label{Fconnected<1}
\\ &
\bullet
\mbox{
If $X, Y \in \mathcal{F}$, $|X| = |Y|$, and $X \not= Y$, then
}
\notag \\ & \qquad 
\mbox{
there exist $i \in X \setminus Y$ and $j \in Y \setminus X$
such that $Y + i - j  \in \mathcal{F}$;
}
\label{Fconnected=}
\\ &
\bullet
\mbox{
If $X, Y \in \mathcal{F}$, $|X| < |Y|$, and $X \setminus Y \not= \emptyset$, then
}
\notag \\ & \qquad 
\mbox{
there exist $i \in X \setminus Y$ and $j \in Y \setminus X$
such that $Y + i - j  \in \mathcal{F}$.
}
\label{Fconnected<2}
\end{align}
\end{propositionM}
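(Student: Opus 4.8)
The plan is to establish \eqref{Fconnected<1} first, directly from \BnvexS by induction, and then to obtain \eqref{Fconnected=} and \eqref{Fconnected<2} as short consequences of \eqref{Fconnected<1} together with one further application of the exchange axiom. The guiding observation is that all three statements ask us to produce a descent or a swap inside $Y$ indexed by an element of $Y \setminus X$, and \BnvexS always offers exactly two alternatives; the work is to show that the ``undesired'' alternative (i) can be routed back to the desired conclusion.

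For \eqref{Fconnected<1} I would argue by induction on $m = |X \setminus Y|$. In the base case $m = 0$ we have $X \subsetneq Y$, so $Y \setminus X \neq \emptyset$; choosing any $i \in Y \setminus X$ and applying \BnvexS to the ordered pair $(Y, X)$ with this $i$, alternative (ii) is vacuous because $X \setminus Y = \emptyset$, so alternative (i) must hold and yields $Y - i \in \mathcal{F}$ with $i \in Y \setminus X$. For the inductive step $m \geq 1$, pick $i \in X \setminus Y$ and apply \BnvexS to $(X, Y)$ with $i$: in case (i) set $X' = X - i$, and in case (ii) set $X' = X - i + j_{0}$ where $j_{0} \in Y \setminus X$ is the exchange element. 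In either case $|X'| < |Y|$ and $|X' \setminus Y| = m - 1$, so the induction hypothesis applied to $(X', Y)$ furnishes $j \in Y \setminus X'$ with $Y - j \in \mathcal{F}$. The point to verify is the set identity $Y \setminus X' \subseteq Y \setminus X$ (indeed $Y \setminus X' = Y \setminus X$ in case (i) and $Y \setminus X' = (Y \setminus X) \setminus \{ j_{0} \}$ in case (ii), both using $i \notin Y$), which places the returned element $j$ in $Y \setminus X$ as required.

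Granting \eqref{Fconnected<1}, both \eqref{Fconnected=} and \eqref{Fconnected<2} follow by the same device. In each, choose $i \in X \setminus Y$ (nonempty under the respective hypotheses, since $|X| = |Y|$ with $X \neq Y$, or $X \setminus Y \neq \emptyset$ directly) and apply \BnvexS to $(X, Y)$ with $i$. If alternative (ii) occurs we are immediately done, as it supplies $j \in Y \setminus X$ with $Y + i - j \in \mathcal{F}$. If alternative (i) occurs, then $Y + i \in \mathcal{F}$, and since $|X| < |Y + i|$ I apply \eqref{Fconnected<1} to the pair $(X, Y + i)$ to obtain $j \in (Y + i) \setminus X$ with $(Y + i) - j \in \mathcal{F}$; because $i \in X$ we have $(Y + i) \setminus X = Y \setminus X$, so $j \in Y \setminus X$ and $(Y + i) - j = Y + i - j \in \mathcal{F}$, which is exactly the desired swap.

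The main obstacle is the inductive proof of \eqref{Fconnected<1}; everything else is a brief deduction from it. Within that induction the only delicate matter is the set-difference bookkeeping, namely confirming that the swap element returned by the hypothesis genuinely lies in $Y \setminus X$ (and not merely in $Y \setminus X'$) and that both branches of \BnvexS strictly decrease $|X \setminus Y|$ while preserving $|X'| < |Y|$. I would also remark that deriving \eqref{Fconnected<2} from \BnvexS directly, rather than from the other two items, is fully consistent with the footnote, which asserts only that \eqref{Fconnected<2} is not a formal consequence of \eqref{Fconnected<1} and \eqref{Fconnected=} viewed as abstract properties of an arbitrary set family.
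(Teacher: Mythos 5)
Your proposal is correct and takes essentially the same approach as the paper: an induction that uses \BnvexS to pull $X$ toward $Y$ (the paper measures progress by $|X \bigtriangleup Y|$ rather than your $|X \setminus Y|$, and handles the subcase $X \setminus Y = \emptyset$ inside the induction step by the same role-reversal application of \BnvexS that you use in your base case), followed by the identical derivation of \eqref{Fconnected=} and \eqref{Fconnected<2} from one application of \BnvexS together with \eqref{Fconnected<1}. Your set-difference bookkeeping ($Y \setminus X' \subseteq Y \setminus X$ in both branches) matches the paper's and closes the only delicate point.
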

\begin{proof}
We prove (\ref{Fconnected<1}) by induction on 
$|X \bigtriangleup Y| = |X \setminus Y| + |Y \setminus X|$.
 If 
$|X \bigtriangleup Y| = 1$,
then
$X = Y - j \in \mathcal{F}$ holds with the unique element $j \in Y \setminus X$.
 For the induction step, assume 
$|X \bigtriangleup Y| > 1$.
 If $X \setminus Y = \emptyset$, then
 \BnvexS  applied to $Y$, $X$, and an arbitrary $j \in Y \setminus X$
implies 
$Y - j \in \mathcal{F}$.
Otherwise,
take any $i \in X \setminus Y$.
 Then, we have
(i) $X - i \in \mathcal{F}$,  
 or  
(ii) there exists some $k \in Y \setminus X$ such that
$X - i +k \in \mathcal{F}$.
 Let $X' = X-i$ in case (i)
and $X' = X-i+ k$ in case (ii).
 Since $|X'| \le |X| < |Y|$
and 
$|X' \bigtriangleup Y| < |X \bigtriangleup Y|$,
we can apply the induction hypothesis to $X'$ and $Y$
to obtain $Y - j \in \mathcal{F}$ for some 
$j \in Y \setminus X' \subseteq Y  \setminus X$.

To prove (\ref{Fconnected=}) and (\ref{Fconnected<2})
assume $|X| \leq |Y|$ and $X \setminus Y \not= \emptyset$.
We apply
\BnvexS to $X$, $Y$, and $i \in X \setminus Y$,
to obtain
(i) $ Y + i \in \mathcal{F}$, 
 or 
(ii) there exists some $j \in Y \setminus X$ such that
$ Y + i -j  \in \mathcal{F}$.
In case (i) we have $|X| \le |Y| < |Y+i|$.
Then, by (\ref{Fconnected<1}),
we obtain
$Y+i - j \in \mathcal{F}$ for some
$j \in (Y+i) \setminus X = Y \setminus X$.
\end{proof}

To prove Theorem \ref{THmnatcavlocexc01},  we will show a stronger statement.

\begin{proposition}\label{PRmnatcavlocexcW01}
A set function  $f: 2\sp{N} \to \Rminf$ 
is \Mnat-concave
if and only if the effective domain
$\dom f$ satisfies
\eqref{Fconnected<1}, \eqref{Fconnected=}, and \eqref{Fconnected<2}
 and $\MncavlocS$  is satisfied.
\end{proposition}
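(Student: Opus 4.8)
The plan is to establish the two directions via the implication diagram of Section~\ref{SCproofmnatexccardloc}, letting the three properties \eqref{Fconnected<1}, \eqref{Fconnected=}, \eqref{Fconnected<2} serve as the domain condition ``DOM'' appearing there. For the ``only if'' direction, suppose $f$ is \Mnat-concave. Then $\dom f$ satisfies \BnvexS directly from the exchange property $\MncavS$, so by Proposition~\ref{PRmnsetconnected01} it satisfies \eqref{Fconnected<1}, \eqref{Fconnected=}, and \eqref{Fconnected<2}; and $\MncavlocS$ holds by Proposition~\ref{PRmconcavlocexc01onlyif}. This is the easy direction.

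For the ``if'' direction, assume $\dom f$ satisfies the three connectivity properties and that $\MncavlocS$ holds. I would follow the chain
\[
\MncavlocS + \text{DOM} \ \Longrightarrow\ \text{(P1$[\mathbb{B}]$), (P2$[\mathbb{B}]$)} \ \Longrightarrow\ \text{(P3$[\mathbb{B}]$), (P4$[\mathbb{B}]$)} \ \Longrightarrow\ \MncavS .
\]
The first implication is the globalization step, realized by Lemmas~\ref{LMmnatexcdiffcard01} and \ref{LMmnatexcequicard01}, whose proofs invoke \emph{only} \eqref{Fconnected<1}, \eqref{Fconnected=}, \eqref{Fconnected<2} and not the full \BnvexS. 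The second implication, by Lemmas~\ref{LMp12toP3} and \ref{LMp12toP4}, needs no domain hypothesis. The third is a direct case split on $|X|$ versus $|Y|$ (cf.\ Theorem~\ref{THmconcavcardexc01}(2)): for $i\in X\setminus Y$ the exchange inequality \eqref{mnatconcavexc2} is delivered by (P2$[\mathbb{B}]$) when $|X|=|Y|$, by (P3$[\mathbb{B}]$) when $|X|<|Y|$, and by (P4$[\mathbb{B}]$) when $|X|>|Y|$, so the three properties jointly cover \MncavS.

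The heart of the argument, and the main obstacle, is the globalization step. To obtain (P2$[\mathbb{B}]$) for $X,Y\in\dom f$ with $|X|=|Y|$ and $i\in X\setminus Y$, I would induct on $|X\bigtriangleup Y|$; after cancelling the common part $Z=X\cap Y$, the base case $|X\bigtriangleup Y|=4$ is precisely (L3$[\mathbb{B}]$), and the inductive step uses \eqref{Fconnected=} to produce an intermediate member $Y+i'-j'\in\dom f$ (with $i'\in X\setminus Y$, $j'\in Y\setminus X$) that is closer to $X$, to which the induction hypothesis and a single local exchange can then be combined. The derivation of (P1$[\mathbb{B}]$) is parallel, its small cases reducing to (L1$[\mathbb{B}]$) and (L2$[\mathbb{B}]$) and its inductive step relying on \eqref{Fconnected<1} and \eqref{Fconnected<2} to supply the intermediate domain elements when $|X|<|Y|$. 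The delicate point throughout is keeping every intermediate set inside $\dom f$, so that all function values stay finite and the local inequalities apply; the three connectivity properties are exactly what guarantee this, which is why no assumption stronger than \eqref{Fconnected<1}, \eqref{Fconnected=}, \eqref{Fconnected<2} on $\dom f$ is required (note that \eqref{Fconnected<2} is not implied by the other two). Finally, Theorem~\ref{THmnatcavlocexc01} follows at once, since by Proposition~\ref{PRmnsetconnected01} every \Mnat-convex family satisfies \eqref{Fconnected<1}, \eqref{Fconnected=}, and \eqref{Fconnected<2}.
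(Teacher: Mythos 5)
Your overall architecture coincides with the paper's: the ``only if'' direction via Propositions~\ref{PRmconcavlocexc01onlyif} and \ref{PRmnsetconnected01}, and the ``if'' direction via the chain from $\MncavlocS$ plus the three connectivity conditions to (P1$[\mathbb{B}]$) and (P2$[\mathbb{B}]$), then to (P3$[\mathbb{B}]$) and (P4$[\mathbb{B}]$) by Lemmas~\ref{LMp12toP3} and \ref{LMp12toP4}, then to $\MncavS$ by the cardinality case split. The gap lies in the one step you only sketch --- the globalization Lemmas~\ref{LMmnatexcdiffcard01} and \ref{LMmnatexcequicard01} --- and it is a genuine one: the naive induction you describe (``the induction hypothesis and a single local exchange can then be combined'') does not close. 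Concretely, for (P2$[\mathbb{B}]$) take $Y' = Y + i' - j' \in \dom f$ from \eqref{Fconnected=} and apply the induction hypothesis to $(X, Y', i)$, giving $f(X) + f(Y') \le f(X - i + j) + f(Y' + i - j)$ for some $j \in Y' \setminus X$. To return to $(X,Y)$ you must compare $f(Y) + f(Y' + i - j)$ with $f(Y + i - j) + f(Y')$ via (L3$[\mathbb{B}]$); but (L3$[\mathbb{B}]$) only delivers a maximum of two terms, and when that maximum is attained by the other branch, namely $f(Y) + f(Y' + i - j) \le f(Y + i - j') + f(Y + i' - j)$, the intermediate values no longer cancel: you are left with $f(X - i + j) + f(Y + i - j')$ with $j \neq j'$, which is not of the matched form $f(X - i + j'') + f(Y + i - j'')$ required by (P2$[\mathbb{B}]$), and the leftover term $f(Y + i' - j)$ need not be bounded by $f(Y')$.

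The paper's proof resolves exactly this difficulty with two devices your sketch lacks: (i) a perturbation vector $p$, defined from a putative minimal counterexample (with an $\varepsilon$-term to handle sets outside $\dom f$), chosen so that $f_{p}(X - i_{*} + j) = f_{p}(X)$ for every feasible $j$ and $f_{p}(Y + i_{*} - j) < f_{p}(Y)$ for every $j$ --- this makes all $X$-side exchange values equal, so mismatched indices $j \neq j'$ become harmless; and (ii) the intermediate set is not taken arbitrarily from \eqref{Fconnected=} but chosen to maximize $f_{p}(Y + i_{0} - j)$ over $j$, which is what absorbs the bad branch of (L3$[\mathbb{B}]$). There is a further point your induction skips: for (P2$[\mathbb{B}]$) with fixed exchange element $i$, the intermediate element $i'$ must differ from $i$ (otherwise $i \notin X \setminus Y'$ and the induction hypothesis cannot be applied with the same $i$); \eqref{Fconnected=} alone does not guarantee this, and the paper needs a separate argument (Claim~2 in the proof of Lemma~\ref{LMmnatexcequicard01}, applying \eqref{Fconnected=} twice and then (L3$[\mathbb{B}]$)) to produce such an $i_{0} \neq i_{*}$. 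Until these points are supplied, the heart of the ``if'' direction remains unproven.
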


In Lemmas \ref{LMmnatexcdiffcard01} and \ref{LMmnatexcequicard01} below, 
we derive (P1$[\mathbb{B}]$) and (P2$[\mathbb{B}]$) from $\MncavlocS$, respectively,
under the connectedness conditions
(\ref{Fconnected<1}), (\ref{Fconnected=}), and (\ref{Fconnected<2})
on $\dom f$.

\begin{lemmaM}  \label{LMmnatexcdiffcard01}
If $\dom f$ satisfies {\rm (\ref{Fconnected<1})} and {\rm (\ref{Fconnected<2})},
then $\MncavlocS$ implies {\rm (P1$[\mathbb{B}]$)}.
\end{lemmaM}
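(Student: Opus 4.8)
The plan is to prove (P1$[\mathbb{B}]$) by induction on the Hamming distance $|X\bigtriangleup Y| = |X\setminus Y| + |Y\setminus X|$. First I would dispose of the trivial situation: if $X\notin\mathcal{F}$ or $Y\notin\mathcal{F}$ (where $\mathcal{F}=\dom f$) the left-hand side of \eqref{mnatP1=01} is $-\infty$ and there is nothing to show, so I assume $X,Y\in\mathcal{F}$. Note that $|X|<|Y|$ forces $|Y\setminus X| > |X\setminus Y|\ge 0$, so the maximum in \eqref{mnatP1=01} ranges over a nonempty set. The base case $X\subseteq Y$ with $|Y\setminus X|=1$ is an equality, and the inductive step splits according to whether $X\setminus Y$ is empty.

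For the case $X\subseteq Y$ (with $|Y\setminus X|\ge 2$), I would pick, using \eqref{Fconnected<1} applied to $X,Y$, an element $k\in Y\setminus X$ with $Y-k\in\mathcal{F}$, and apply the induction hypothesis to the pair $(X,Y-k)$ — whose distance is one smaller and which still satisfies $|X|<|Y-k|$ — to obtain $j\in (Y-k)\setminus X\subseteq Y\setminus X$ with $f(X)+f(Y-k)\le f(X+j)+f(Y-k-j)$ and $X+j,\,Y-k-j\in\mathcal{F}$. Now (L1$[\mathbb{B}]$) applied with $Z=Y-k-j$ and the pair $k,j$ gives $f(Y)+f(Y-k-j)\le f(Y-j)+f(Y-k)$. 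Adding the two inequalities and cancelling the finite common terms $f(Y-k)+f(Y-k-j)$ yields precisely $f(X)+f(Y)\le f(X+j)+f(Y-j)$, completing this case.

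For the case $X\setminus Y\neq\emptyset$, since $|X|<|Y|$ condition \eqref{Fconnected<2} supplies $i_0\in X\setminus Y$ and $j_0\in Y\setminus X$ with $Y'=Y+i_0-j_0\in\mathcal{F}$; one checks $|X\bigtriangleup Y'|=|X\bigtriangleup Y|-2$ and $|Y'|=|Y|>|X|$, so the induction hypothesis applied to $(X,Y')$ produces $j_1\in Y'\setminus X=(Y\setminus X)\setminus\{j_0\}$ with $f(X)+f(Y')\le f(X+j_1)+f(Y'-j_1)$ and $X+j_1,\,Y'-j_1\in\mathcal{F}$. The remaining task is to transfer this back to $Y$. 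Setting $Z=Y-j_0-j_1$ (so that $i_0,j_0,j_1\notin Z$ are distinct) and applying (L2$[\mathbb{B}]$) with $a=j_0,\ b=j_1,\ c=i_0$ gives $f(Y)+f(Y'-j_1)\le\max\{\,f(Y+i_0-j_1)+f(Y-j_0),\ f(Y')+f(Y-j_1)\,\}$, using $Z+i_0=Y'-j_1$, $Z+j_0+i_0=Y+i_0-j_1$, $Z+j_1=Y-j_0$, $Z+j_1+i_0=Y'$, $Z+j_0=Y-j_1$. Adding this to the induction inequality and cancelling the finite terms $f(Y'-j_1)$ and, in the favourable branch, $f(Y')$, I get $f(X)+f(Y)\le f(X+j_1)+f(Y-j_1)$ whenever the second term attains the maximum, which finishes the step with $j=j_1$.

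The main obstacle is the other branch, where $f(Y+i_0-j_1)+f(Y-j_0)$ dominates: a single use of (L2$[\mathbb{B}]$) there only reproduces the induction hypothesis, and the argument threatens to loop. I would attack it by re-expressing the three surviving values $f(Y+i_0-j_1),\ f(Y-j_0),\ f(Y')$ over the common base $W=Y-j_0-j_1$ and applying a second local exchange; a further (L2$[\mathbb{B}]$) on $W$ bounds this branch by $f(Y')+f(Y-j_1)$ — which closes the induction exactly as before — except in a fully degenerate sub-case that forces a chain of equalities. Breaking that residual loop is where the real work lies, and I expect to need (L3$[\mathbb{B}]$) (on a quadruple built from $i_0,j_0,j_1$ and an element moved on the $X$-side) together with the freedom in the choice guaranteed by \eqref{Fconnected<2}. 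To make everything airtight I would phrase the whole proof as a minimal counterexample (a violating pair with $|X\bigtriangleup Y|$ least), so that each appeal to the induction hypothesis becomes ``no smaller violating pair exists'' and a secondary extremal choice excludes the degenerate equality case; the two monotone cases above are then routine once (L1$[\mathbb{B}]$) and \eqref{Fconnected<1}/\eqref{Fconnected<2} are in hand.
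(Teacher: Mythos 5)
Your induction skeleton (minimal counterexample with respect to $|X\bigtriangleup Y|$, case split on whether $X\setminus Y$ is empty, use of \eqref{Fconnected<1} and \eqref{Fconnected<2} to produce the smaller pair) matches the paper's, and your first case ($X\subseteq Y$) is correct and complete: the combination of the induction hypothesis for $(X,Y-k)$ with a single application of (L1$[\mathbb{B}]$) does close that case, arguably more directly than the paper does. But the second case contains a genuine gap, and you have located it yourself: the ``bad branch'' of (L2$[\mathbb{B}]$), where $f(Y)+f(Y'-j_1)\leq f(Y+i_0-j_1)+f(Y-j_0)$. Your proposed repair --- a second application of (L2$[\mathbb{B}]$) on $W=Y-j_0-j_1$ --- provably either reproduces the favourable branch or degenerates into the tautology $f(Y)+f(Y'-j_1)\leq f(Y)+f(Y'-j_1)$, and your plan for the residual equality case (``I expect to need (L3$[\mathbb{B}]$) \dots together with \dots a secondary extremal choice'') is never carried out. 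This is exactly where the real content of the lemma sits, and a naive extremal choice cannot work as stated: the violation inequality involves the $X$-side values $f(X+j)$, which vary with $j$, so maximizing a quantity on the $Y$-side alone does not control both sides of the inequality you need to contradict.

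The paper's proof supplies precisely the two devices your sketch is missing. First, it perturbs $f$ to $f_{p}(Z)=f(Z)+\sum_{i\in Z}p_{i}$ with a price vector $p$ built from the violating pair and a parameter $\varepsilon>0$, so that $f_{p}(X+j)=f_{p}(X)$ for every feasible $j\in Y\setminus X$ and $f_{p}(Y-j)<f_{p}(Y)$ \emph{strictly} for all $j\in Y\setminus X$ (strictness for infeasible $X+j$ is what the $\varepsilon$ buys); since the (P1$[\mathbb{B}]$)-violation is invariant under adding a linear function, this normalization is free. Second, it chooses the exchange pair $(i_{0},j_{0})$ to maximize $f_{p}(Y+i-j)$ over \emph{all} $i\in X\setminus Y$, $j\in Y\setminus X$. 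With these two ingredients both branches of (L2$[\mathbb{B}]$) are bounded at once: the bad-branch term $f_{p}(Y+i_{0}-j)$ is replaced by $f_{p}(Y+i_{0}-j_{0})$ via maximality, and then $\max\{f_{p}(Y-j)-f_{p}(Y),\,f_{p}(Y-j_{0})-f_{p}(Y)\}<0$ gives the strict inequality $f_{p}(Y'-j)<f_{p}(Y')$, i.e.\ $(X,Y')$ is again a violating pair, contradicting minimality. (The paper also splits the cases by $|Y|-|X|\geq 2$ versus $|Y|-|X|=1$ rather than by $X\setminus Y=\emptyset$, but that difference is cosmetic.) Until you either import this price-vector normalization or produce a genuinely different mechanism that breaks the equality chain, the proof is incomplete at its decisive step.
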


\begin{proof}
To prove (P1$[\mathbb{B}]$)
by contradiction, we assume that
there exists a pair $(X,Y)$ for which 
(\ref{mnatP1=01}) 
fails.
That is, we assume that the set of such pairs
\begin{align*}
 \mathcal{D} = \{(X, Y) \mid {} &  X, Y \in \dom f,\ |X| < |Y|, \ 
\\ &
f(X) + f(Y) > f(X + j) + f(Y - j)
\mbox{ for all } j \in Y \setminus X \}
\end{align*}
is nonempty.
Take a pair $(X,Y) \in \mathcal{D}$ with 
$|X \bigtriangleup Y| = |X \setminus Y| + |Y \setminus X|$
minimum.
For a fixed $\varepsilon > 0$, define 
$p \in \mathbb{R}\sp{N}$ by
\[
p_{j} = \left\{
\begin{array}{ll}
f(X) - f(X + j) & (j \in Y \setminus X,\ X + j \in \dom f),\\
f(Y - j) - f(Y) + \varepsilon
 & (j \in Y \setminus X,\ X + j \not\in \dom f, \ Y - j \in \dom f),\\
0 & (\mbox{otherwise}).
\end{array}
\right.
\]

\medskip

Claim 1:
\begin{eqnarray}
f_{p}(X + j) & = & f_{p}(X) 
   \qquad (j \in Y \setminus X,\ X + j \in \dom f),
\label{vmexcard1l1c11=01}\\
f_{p}(Y - j) & < & f_{p}(Y) \qquad (j \in Y \setminus X).
\label{vmexcard1l1c12=01}
\end{eqnarray}

\noindent (Proof of Claim~1) The equality
(\ref{vmexcard1l1c11=01}) is obvious from the definition of $p$.
If $X+j \in \dom f$,
(\ref{vmexcard1l1c12=01}) follows from (\ref{vmexcard1l1c11=01}) and
$f_{p}(X) + f_{p}(Y) > f_{p}(X + j) + f_{p}(Y - j)$.
If $X+j \not\in \dom f$,
(\ref{vmexcard1l1c12=01}) follows from the fact that
$f_{p}(Y - j) - f_{p}(Y) = -\varepsilon$ or $-\infty$
depending on whether $Y-j \in \dom f$ or not.

\medskip

In the following, we divide into cases,
Case~1:  $|Y|-|X| \ge 2$ and
Case~2:  $|Y|-|X| = 1$,
and derive a contradiction in each case.
We first consider Case~1.
 Since $|X| < |Y|$,
the assumption (\ref{Fconnected<1}) implies
$Y  - j_{0} \in \dom f$ for some $j_0 \in Y \setminus X$.

\medskip

Claim 2:  
For $Y' = Y   - j_{0}$ we have
\begin{equation}\label{vmexcard1clm3}
 f_{p}(X) + f_{p}(Y') > f_{p}(X + j) + f_{p}(Y' - j)
\qquad  (j \in Y' \setminus X).
\end{equation}

\noindent (Proof of Claim 2)
It suffices to consider the case where  $X + j \in \dom f$.
Then we have $f_{p}(X) = f_{p}(X + j)$ by (\ref{vmexcard1l1c11=01}).
 We also have
\begin{align*}
 f_{p}(Y' - j)  
& =
[  f_{p}(Y) + f_{p}(Y  - j_{0} - j) ] - f_{p}(Y)
\\
& \leq [ f_{p}(Y  - j_{0}) + f_{p}(Y - j) ]
 - f_{p}(Y)
\\
& < f_{p}(Y  - j_{0}) = f_{p}(Y') 
\end{align*}
by \MncavlocS and (\ref{vmexcard1l1c12=01}).
Therefore,
(\ref{vmexcard1clm3}) holds.

\medskip

Since  $|Y'| = |Y|-1 > |X|$, 
(\ref{vmexcard1clm3}) implies $(X, Y') \in \cal D$.
This contradicts the choice of $(X, Y)$, since
$|X \bigtriangleup Y'| = |X \bigtriangleup Y| -1 $.
Therefore, Case~1 cannot occur.
We next consider Case~2.

\medskip

Claim 3:
There exist $i_{0} \in X \setminus Y$ and $j_{0} \in Y \setminus X$
such that $Y + i_{0} - j_{0} \in \dom f$
and
\begin{equation}\label{vmexcard1l1a1-2}
f_{p}(Y + i_{0} - j_{0}) \geq  f_{p}(Y + i - j)
\qquad
(i \in X \setminus Y, \ j \in Y \setminus X).
\end{equation}

\noindent (Proof of Claim 3)
  We first note that $|X\setminus Y| \ge 1$ holds.
Indeed, if $|X\setminus Y|=0$, then
$X=Y-i$ and $Y=X+i$
for the unique element $i$ of $Y \setminus X$,
and $f(X) + f(Y) = f( X + i) + f( Y-i)$,
a contradiction to $(X,Y) \in \mathcal{D}$.
By assumption (\ref{Fconnected<2}), 
$Y + i - j \in \dom f$ for some $i \in X\setminus Y$ and $j \in Y \setminus X$.
Any pair $(i,j)$ that maximizes $f_{p}(Y + i - j)$
over $i \in X \setminus Y$ and $j \in Y \setminus X$
serves as $(i_{0}, j_{0})$.

\medskip

Claim 4:  
For $Y' = Y  + i_{0} - j_{0}$ we have (\ref{vmexcard1clm3}).

\noindent (Proof of Claim 4)
It suffices to consider the case where  $X + j \in \dom f$.
Then we have $f_{p}(X) = f_{p}(X + j)$ by (\ref{vmexcard1l1c11=01}).
 We also have
\begin{align*}
& f_{p}(Y' - j)  =
[ f_{p}(Y + i_{0} - j_{0} - j) +  f_{p}(Y)  ] - f_{p}(Y)
\\
& \leq  \max\{f_{p}(Y + i_{0} - j_{0}) + f_{p}(Y - j),
   f_{p}(Y + i_{0} - j) + f_{p}(Y - j_{0})\} - f_{p}(Y)
\\
& \leq 
 f_{p}(Y + i_{0} - j_{0}) +
 \max\{f_{p}(Y - j) - f_{p}(Y), f_{p}(Y - j_{0}) - f_{p}(Y)\}
\\
& < f_{p}(Y + i_{0} - j_{0}) = f_{p}(Y') 
\end{align*}
by $\MncavlocS$, (\ref{vmexcard1l1a1-2}), and (\ref{vmexcard1l1c12=01}).
Therefore,
(\ref{vmexcard1clm3}) holds.

\medskip

Since $|Y'| = |Y| > |X|$, 
(\ref{vmexcard1clm3})
implies $(X, Y') \in \cal D$.
This contradicts the choice of $(X, Y)$, since
$|X \bigtriangleup Y'| = |X \bigtriangleup Y| -2$.
Therefore, Case~2 cannot occur either. 
Hence, $\mathcal{D}$ must be empty, which means that
(P1$[\mathbb{B}]$) holds.
\end{proof}

\begin{lemmaM}  \label{LMmnatexcequicard01}
If $\dom f$ satisfies {\rm (\ref{Fconnected=})},
$\MncavlocS$ implies {\rm (P2$[\mathbb{B}]$)}.
\end{lemmaM}
\begin{proof}
To prove (P2$[\mathbb{B}]$)
by contradiction, we assume that
there exists a pair $(X,Y)$ for which 
(\ref{mnatP2=01}) 
 fails.
That is, we assume that the set of such pairs
\begin{align*}
 \mathcal{D} = \{(X, Y) \mid {} &  X, Y \in \dom f,\ 
|X| = |Y|,\ 
\exists i_{*} \in X \setminus Y \  \mbox{s.t.} 
\\ &
f(X) + f(Y)   > f(X- i_{*} +j ) + f(Y+ i_{*}-j )
\mbox{ for all } j \in Y \setminus X \}
\end{align*}
is nonempty.
Take a pair $(X,Y) \in \mathcal{D}$ with 
$|X \setminus Y|$ minimum,
and fix $i_{*} \in X \setminus Y$ 
appearing in the definition of $\mathcal{D}$.
We have
$|X \setminus Y| = |Y \setminus X| \geq 2$
by $\MncavlocS$.
For a fixed $\varepsilon > 0$, define 
$p \in \mathbb{R}\sp{N}$ by
\begin{equation} \label{mnatlocToP2pjdef}
\begin{array}{rcl}
p_{j} & = & \left\{
\begin{array}{ll}
f(X) - f(X- i_{*} +j) & (j \in Y \setminus X,\ 
         X- i_{*} +j \in \dom  f),\\
f(Y+ i_{*} -j) - f(Y) + \varepsilon \\
 & \hspace{-15mm} (j \in Y \setminus X,\ 
X- i_{*} +j \not\in \dom  f,\ 
Y+ i_{*} -j \in \dom  f),\\
0 & (\mbox{otherwise}).
\end{array}
\right.
\end{array}
\end{equation}

\medskip

Claim 1:
\begin{eqnarray}
f_{p}(X -  i_{*}  + j) & = & f_{p}(X) 
   \qquad (j \in Y \setminus X,\ X -  i_{*}  + j \in \dom  f),
         \label{vmexequiszl3-c1-1=01}\\
f_{p}(Y +  i_{*}  - j) 
       & < & f_{p}(Y) \qquad (j \in Y \setminus X).
\label{vmexequiszl3-c1-2=01}
\end{eqnarray}

\noindent (Proof of Claim~1) The equality
(\ref{vmexequiszl3-c1-1=01}) is obvious from the definition of $p$.
If $X-i_{*}+j \in \dom f$,
(\ref{vmexequiszl3-c1-2=01}) follows from (\ref{vmexequiszl3-c1-1=01}) and
$f_{p}(X) + f_{p}(Y) > f_{p}(X - i_{*} + j) + f_{p}(Y +i_{*} - j)$.
If $X-i_{*}+j \not\in \dom f$,
(\ref{vmexequiszl3-c1-2=01}) follows from the fact that
$f_{p}(Y +i_{*} - j) - f_{p}(Y) = -\varepsilon$ or $-\infty$
depending on whether $Y+i_{*}-j \in \dom f$ or not.

\medskip

Claim 2:
There exist $i_{0} \in (X \setminus Y) - i_{*}$ and $j_{0} \in Y \setminus X$
such that $Y + i_{0} - j_{0} \in \dom f$
and
\begin{equation}\label{vmexequiszl3-a1=01}
f_{p}(Y + i_{0} - j_{0}) \geq  f_{p}(Y + i_{0} - j)
\qquad
(j \in Y \setminus X).
\end{equation}

\noindent (Proof of Claim 2)
First, we show the existence of
$i_{0} \in X\setminus Y$ and $j \in Y\setminus X$
such that $Y + i_{0} - j \in \dom f$ and $i_{0} \not= i_{*}$.
By assumption (\ref{Fconnected=}),
there exist $i_{1} \in X \setminus Y$ and $j_{1} \in Y \setminus X$ such that
$Y'=Y + i_{1} - j_{1} \in \dom f$.
If $i_{1} \not= i_{*}$,  
we are done with $(i_{0},j) = (i_{1},j_{1})$. 
Otherwise, 
again by (\ref{Fconnected=}), 
there exist $i_{2} \in X \setminus Y'$ and $j_{2} \in Y' \setminus X$ such that
$Y'' = Y' + i_{2} - j_{2} \in \dom f$.
By the local exchange property 
(L3$[\mathbb{B}]$) in (\ref{mnatconcavexc22loc}) for $Y$ and $Y''$
we obtain
$Y + i_{2} - j_{1} \in \dom f$
or
$Y + i_{2} - j_{2} \in \dom f$.
Hence we can take $(i_{0},j) =(i_{2},j_{1})$ or $(i_{0},j) =(i_{2},j_{2})$,
where $i_{2}$ is distinct from $i_{*}$.
Next we choose the element $j_{0}$.
By the choice of $i_{0}$, we have $f_{p}(Y+i_{0}-j) > -\infty$ 
for some $j \in Y \setminus X$.
By letting $j_{0}$ to be an element
$j \in Y\setminus X$ that maximizes 
$f_{p}(Y+i_{0}-j)$, we obtain (\ref{vmexequiszl3-a1=01}).

\medskip

Claim 3:  
For $Y' = Y  + i_{0} - j_{0}$ we have
\begin{equation}\label{vmexequiszlem3-1=01}
 f_{p}(X) + f_{p}(Y') 
  > f_{p}(X -  i_{*}  + j) + f_{p}(Y' +  i_{*}  - j)
\qquad (j \in Y' \setminus X).
\end{equation}

\noindent (Proof of Claim 3)
It suffices to consider the case where $X - i_{*} + j \in \dom f$.
Then we have 
$f_{p}(X) = f_{p}(X - i_{*} + j)$ 
by (\ref{vmexequiszl3-c1-1=01}).
We also have
\begin{align*}
& f_{p}(Y'+ i_{*} -j)
 =  
 [ f_{p}(Y+ i_{0} + i_{*} - j_{0} -j) + f_{p}(Y) ] - f_{p}(Y)
\\
& \leq  
 \max\{f_{p}(Y+ i_{0} - j_{0}) + f_{p}(Y+ i_{*} -j),
 f_{p}(Y+ i_{0} -j ) + f_{p}(Y+ i_{*} - j_{0} )\} 
  - f_{p}(Y)  
\\
& \leq  
 f_{p}(Y+ i_{0} - j_{0}) +
\max\{f_{p}(Y+ i_{*}-j)  - f_{p}(Y),\ 
f_{p}(Y+ i_{*} - j_{0} ) - f_{p}(Y)\} 
\\
& < f_{p}(Y+ i_{0} - j_{0})  = f_{p}(Y'),
\end{align*}
where the first inequality is due to
(L3$[\mathbb{B}]$) 
of $\MncavlocS$,
and the second and third are 
by (\ref{vmexequiszl3-a1=01}) and (\ref{vmexequiszl3-c1-2=01}).
Hence follows (\ref{vmexequiszlem3-1=01}).

\medskip

Since $|X|=|Y'|$ and  $i_{*} \in X \setminus Y'$, 
 (\ref{vmexequiszlem3-1=01}) implies $(X, Y') \in \cal D$.
This contradicts the choice of $(X, Y)$, since  
$|X \setminus Y'| =|X \setminus Y| -1$.
Therefore, $\mathcal{D}$ must be empty, which means that
(P2$[\mathbb{B}]$) holds.
\end{proof}

Next we derive (P3$[\mathbb{B}]$) from (P1$[\mathbb{B}]$) and (P2$[\mathbb{B}]$).

\begin{lemmaM} \label{LMp12toP3}
{\rm (P1$[\mathbb{B}]$)} \&  {\rm (P2$[\mathbb{B}]$)} 
$\Longrightarrow$ 
{\rm (P3$[\mathbb{B}]$)}.
\end{lemmaM}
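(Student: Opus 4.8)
The plan is to argue by contradiction, mirroring the proofs of Lemmas~\ref{LMmnatexcdiffcard01} and~\ref{LMmnatexcequicard01} but feeding in the global properties {\rm (P1$[\mathbb{B}]$)} and {\rm (P2$[\mathbb{B}]$)} wherever those two lemmas used the local axiom $\MncavlocS$. Suppose {\rm (P3$[\mathbb{B}]$)} fails, and let $\mathcal{D}$ be the set of pairs $(X,Y)$ with $X,Y\in\dom f$, $|X|<|Y|$, for which some witness $i_{*}\in X\setminus Y$ satisfies $f(X)+f(Y)>f(X-i_{*}+j)+f(Y+i_{*}-j)$ for every $j\in Y\setminus X$. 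Assuming $\mathcal{D}\neq\emptyset$, I would take $(X,Y)\in\mathcal{D}$ with $|X\bigtriangleup Y|$ minimum and fix such an $i_{*}$. I would first record that $\dom f$ inherits the connectivity conditions \eqref{Fconnected<1}, \eqref{Fconnected=}, \eqref{Fconnected<2}: since the left-hand sides are finite, the maxima appearing in {\rm (P1$[\mathbb{B}]$)} and {\rm (P2$[\mathbb{B}]$)} must be attained inside $\dom f$, which supplies the needed exchanges by the same finiteness reasoning that underlies Proposition~\ref{PRmnsetconnected01}.

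Next I would introduce exactly the perturbation $p\in\RR^{N}$ of \eqref{mnatlocToP2pjdef}, so that, by the verbatim analogue of Claim~1 of Lemma~\ref{LMmnatexcequicard01}, $f_{p}(X-i_{*}+j)=f_{p}(X)$ whenever $X-i_{*}+j\in\dom f$ and $f_{p}(Y+i_{*}-j)<f_{p}(Y)$ for all $j\in Y\setminus X$. The point of $p$ is that {\rm (P1$[\mathbb{B}]$)} and {\rm (P2$[\mathbb{B}]$)} survive the replacement $f\mapsto f_{p}$, because on each side of those inequalities the two index sets carry the same total $p$-weight $\sum_{i\in X}p_{i}+\sum_{i\in Y}p_{i}$; hence I may argue with $f_{p}$. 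Using $|X|<|Y|$, I would then produce a neighbour $Y'$ of $Y$ strictly closer to $X$: either $Y'=Y-j_{0}\in\dom f$ when $|Y|-|X|\ge 2$ (existence from \eqref{Fconnected<1}), or $Y'=Y+i_{0}-j_{0}\in\dom f$ with $i_{0}\neq i_{*}$ when $|Y|-|X|=1$ (existence from \eqref{Fconnected=} and \eqref{Fconnected<2}), in each case selecting the exchange that maximizes $f_{p}(Y')$ over the admissible choices, as in Claim~3 of Lemma~\ref{LMmnatexcdiffcard01} and Claim~2 of Lemma~\ref{LMmnatexcequicard01}.

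The crux is to show that $(X,Y')$ lies again in $\mathcal{D}$ with the same witness $i_{*}$; since $|X\bigtriangleup Y'|<|X\bigtriangleup Y|$, this contradicts minimality and forces $\mathcal{D}=\emptyset$. Concretely, I would expand $f_{p}(Y'+i_{*}-j)$ through $Y$ and bound it: in the gap-decreasing case the inequality $f(Z+i+j)+f(Z)\le f(Z+i)+f(Z+j)$ needed (which is the content of (L1$[\mathbb{B}]$)) is itself a one-line consequence of {\rm (P1$[\mathbb{B}]$)} applied to $(Z,Z+i+j)$, while in the overlap-decreasing case I would apply {\rm (P2$[\mathbb{B}]$)} to the equicardinal pair obtained from $Y$ by the two-element modification $+i_{0}+i_{*}-j_{0}-j$, which plays exactly the role that (L3$[\mathbb{B}]$) played in Lemma~\ref{LMmnatexcequicard01}; combined with the maximizing choice of $j_{0}$ and with $f_{p}(Y+i_{*}-j)<f_{p}(Y)$, this yields the required strict inequality. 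I expect this persistence step to be the main obstacle: one must arrange the extremal choice of the exchange so that exactly one of the two alternatives produced by {\rm (P2$[\mathbb{B}]$)} survives after subtracting $f_{p}(Y)$, and one must handle the smallest case ($|X\setminus Y|=1$, equivalently the minimal gap) with care, since there it is {\rm (P1$[\mathbb{B}]$)} rather than {\rm (P2$[\mathbb{B}]$)} that furnishes the admissible reduction.
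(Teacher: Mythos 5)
Your overall scheme (minimal counterexample for $|X \bigtriangleup Y|$, the perturbation \eqref{mnatlocToP2pjdef}, persistence of the witness $i_{*}$) is the same as the paper's, but the crux step of your gap-decreasing case fails as stated. With $Y' = Y - j_{0}$, the quantity to be bounded is $f_{p}(Y' + i_{*} - j) = f_{p}(Y + i_{*} - j_{0} - j)$, and you propose to control it ``through $Y$'' by the inequality $f(Z+i+j) + f(Z) \le f(Z+i) + f(Z+j)$ of (L1$[\mathbb{B}]$). But no instance of (L1$[\mathbb{B}]$) contains both $f(Y + i_{*} - j_{0} - j)$ and $f(Y)$: the four sets in any (L1$[\mathbb{B}]$) instance lie in an interval $[Z, Z+i+j]$ whose only incomparable pair $\{Z+i, Z+j\}$ differs in exactly two elements, whereas $Y + i_{*} - j_{0} - j$ and $Y$ are incomparable and differ in three. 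The only (L1)-type inequality with $f(Y+i_{*}-j_{0}-j)$ on its left-hand side pairs it with $f(Y+i_{*})$, a term about which the assumed failure of (P3$[\mathbb{B}]$) says nothing (it constrains $f(Y+i_{*}-j)$ only for $j \in Y \setminus X$), so that route dead-ends. What is actually needed is the two-term bound
\begin{equation*}
f_{p}(Y+i_{*}-j_{0}-j) + f_{p}(Y) \le \max\{ f_{p}(Y-j_{0}) + f_{p}(Y+i_{*}-j),\ f_{p}(Y-j) + f_{p}(Y+i_{*}-j_{0}) \},
\end{equation*}
which follows from (P1$[\mathbb{B}]$) applied to the \emph{non-nested} pair $(Y+i_{*}-j_{0}-j,\, Y)$, whose cardinalities differ by one and for which the max runs over $\{j_{0}, j\}$. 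This is precisely the paper's ``first inequality due to (P1$[\mathbb{B}]$)''; combined with the maximality of $j_{0}$ and \eqref{vmexequiszl3-c1-2=01} it yields the strict persistence inequality. You pattern-matched from Lemma~\ref{LMmnatexcdiffcard01}, where (L1$[\mathbb{B}]$) does suffice because no witness $i_{*}$ is carried along; once $i_{*}$ rides along, the bound must take the max-of-two form.

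Two further points. First, your claim that \eqref{Fconnected<2} follows ``by finiteness of the maxima'' is not immediate: when $|X| < |Y|$, (P1$[\mathbb{B}]$) produces $X+j,\, Y-j \in \dom f$ and (P2$[\mathbb{B}]$) is inapplicable, so obtaining $Y + i - j \in \dom f$ requires first growing $X$ to cardinality $|Y|$ by repeated use of (P1$[\mathbb{B}]$) and then invoking (P2$[\mathbb{B}]$). Second, your entire Case 2 is avoidable, and here the paper is genuinely slicker: it takes $Y' = Y - j_{0}$ uniformly, even when $|Y| - |X| = 1$; in that event $(X,Y')$ cannot lie in $\mathcal{D}$ (as $|X| = |Y'|$), but the persistence inequality then directly contradicts (P2$[\mathbb{B}]$) applied to $(X, Y', i_{*})$, finishing the proof. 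Thus (P2$[\mathbb{B}]$) enters only in that terminal step, and the overlap-decreasing machinery you import from Lemma~\ref{LMmnatexcequicard01} --- which needs $|X \setminus Y| \ge 2$ to choose $i_{0} \ne i_{*}$, a (P2$[\mathbb{B}]$)-transfer of exchanges in place of (L3$[\mathbb{B}]$), and a separate treatment of the sub-case $|X \setminus Y| = 1$ --- is never needed.
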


\begin{proof}
To prove (P3$[\mathbb{B}]$)
 by contradiction, we assume that
there exists a pair $(X,Y)$ for which 
(\ref{mnatP2=01}) 
fails.
That is, we assume that the set of such pairs
\[
\begin{array}{l}
 \mathcal{D} = \{(X, Y) \mid X, Y \in \dom  f,\ |X| < |Y|,\ 
\exists i_{*} \in X \setminus Y \  \mbox{s.t.} \\
\phantom{\mathcal{D} = \{(X, Y) \mid \ }
f(X) + f(Y)   > f(X- i_{*} +j ) + f(Y+ i_{*}-j )
\mbox{ for all } j \in Y \setminus X \}
\end{array}
\] 
is nonempty.
Take a pair $(X,Y) \in \mathcal{D}$ with 
$|X \bigtriangleup Y| = |X \setminus Y| + |Y \setminus X|$
minimum,
and fix $i_{*} \in X \setminus Y$ 
appearing in the definition of $\mathcal{D}$.
For a fixed $\varepsilon > 0$, define 
$p \in \mathbb{R}\sp{N}$ by  (\ref{mnatlocToP2pjdef}).
Then we have
(\ref{vmexequiszl3-c1-1=01}) 
and
(\ref{vmexequiszl3-c1-2=01})
in Claim 1 in the proof of Lemma \ref{LMmnatexcequicard01}.

\OMIT{
\[
\begin{array}{rcl}
p_{j} & = & \left\{
\begin{array}{ll}
f(X) - f(X- i_{*} +j) & (j \in Y \setminus X,\ 
         X- i_{*} +j \in \dom  f),\\
f(Y+ i_{*} -j) - f(Y) + \varepsilon \\
 & \hspace{-15mm} (j \in Y \setminus X,\ 
X- i_{*} +j \not\in \dom  f,\ 
Y+ i_{*} -j \in \dom  f),\\
0 & (\mbox{otherwise}).
\end{array}
\right.
\end{array}
\]
\medskip
Claim 1:
\begin{eqnarray}
f_{p}(X -  i_{*}  + j) & = & f_{p}(X) 
   \qquad (j \in Y \setminus X,\ X -  i_{*}  + j \in \dom  f),
         \label{vmexequiszl3-c1-1A}\\
f_{p}(Y +  i_{*}  - j) 
       & < & f_{p}(Y) \qquad (j \in Y \setminus X).
\label{vmexequiszl3-c1-2A}
\end{eqnarray}
\noindent (Proof of Claim~1) The equality
(\ref{vmexequiszl3-c1-1A}) is obvious from the definition of $p$.
 The inequality 
(\ref{vmexequiszl3-c1-2A}) follows from (\ref{vmexequiszl3-c1-1A}) and
$f_{p}(X) + f_{p}(Y) > f_{p}(X - i_{*} + j) + f_{p}(Y +i_{*} - j)$
if $X-i_{*}+j \in \dom f$.
If $X-i_{*}+j \not\in \dom f$,
(\ref{vmexequiszl3-c1-2A}) follows from the fact that
$f_{p}(Y +i_{*} - j) - f_{p}(Y) = -\varepsilon$ or $-\infty$
depending on whether $Y+i_{*}-j \in \dom f$ or not.
}

 \medskip

Claim 1:
There exists $j_{0} \in Y \setminus X$
such that $Y  - j_{0} \in \dom f$
and
\begin{equation}\label{vmexequiszl3-a1A}
f_{p}(Y - j_{0}) \geq  f_{p}(Y  - j)
\qquad
(j \in Y \setminus X).
\end{equation}

\noindent (Proof of Claim 1)
 Since $|X| < |Y|$, 
(P1$[\mathbb{B}]$)
implies that
there exists $j \in Y \setminus X$ 
such that $Y  - j \in \dom f$.
Any $j \in Y \setminus X$ that maximizes $f_{p}(Y  - j)$
serves as $j_{0}$.

\medskip

Claim 2:  
For $Y' = Y - j_0$ we have
\begin{equation}\label{vmexequiszlem3-1A}
 f_{p}(X) + f_{p}(Y') 
  > f_{p}(X -  i_{*}  + j) + f_{p}(Y' +  i_{*}  - j)
\qquad (j \in Y' \setminus X).
\end{equation}

\noindent (Proof of Claim 2)
Since this inequality is obvious when $X -i_{*} + j \not\in \dom f$,
we assume that $X - i_{*} + j \in \dom f$.
Then we have 
$f_{p}(X) = f_{p}(X - i_{*} + j)$ by 
(\ref{vmexequiszl3-c1-1=01}).
We also have 
\begin{align*}
& f_{p}(Y'+ i_{*} -j) =  
 [ f_{p}(Y + i_{*} - j_{0} -j) + f_{p}(Y) ] - f_{p}(Y)
\\
& \leq  
 \max\{f_{p}(Y - j_{0}) + f_{p}(Y+ i_{*} -j),
 f_{p}(Y -j ) + f_{p}(Y+ i_{*} - j_{0} )\} 
  - f_{p}(Y)  
\\
& \leq  
 f_{p}(Y - j_{0}) +
\max\{f_{p}(Y+ i_{*}-j)  - f_{p}(Y),\ 
f_{p}(Y+ i_{*} - j_{0} ) - f_{p}(Y)\} 
\\
& < f_{p}(Y - j_{0})  = f_{p}(Y'),
\end{align*}
where the first inequality is due to
(P1$[\mathbb{B}]$),
and the second and third inequalities are 
by (\ref{vmexequiszl3-a1A}) and 
(\ref{vmexequiszl3-c1-2=01}). 
Hence follows (\ref{vmexequiszlem3-1A}).

\medskip

By $|X| < |Y|$ and $|Y'| = |Y|-1$ we have $|X| \leq |Y'|$,
in which the possibility of equality is excluded.
Indeed, if $|X| = |Y'|$, then 
 (\ref{vmexequiszlem3-1A}) contradicts
(P2$[\mathbb{B}]$)
for $(X, Y', i_{*})$.
Therefore, $|X|<|Y'|$ holds.
 Hence, we have
$(X, Y') \in \cal D$ by 
 (\ref{vmexequiszlem3-1A}),
which is a contradiction to the choice of $(X, Y)$ since  
$|X' \bigtriangleup Y| \leq |X \bigtriangleup Y| -1$.
 Therefore, $\mathcal{D}$ must be empty, which means that
(P3$[\mathbb{B}]$) holds.
\end{proof}

Finally we derive (P4$[\mathbb{B}]$) from (P1$[\mathbb{B}]$) and (P2$[\mathbb{B}]$).

\begin{lemmaM} \label{LMp12toP4}
{\rm (P1$[\mathbb{B}]$)} \&  {\rm (P2$[\mathbb{B}]$)} 
$\Longrightarrow$ 
{\rm (P4$[\mathbb{B}]$)}.
\end{lemmaM}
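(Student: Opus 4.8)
The plan is to establish {\rm (P4$[\mathbb{B}]$)} by contradiction, reusing the apparatus already developed for {\rm (P2$[\mathbb{B}]$)} and {\rm (P3$[\mathbb{B}]$)}. Note first that, besides the hypotheses {\rm (P1$[\mathbb{B}]$)} and {\rm (P2$[\mathbb{B}]$)}, Lemma~\ref{LMp12toP3} puts {\rm (P3$[\mathbb{B}]$)} at my disposal, and a direct check shows that {\rm (P1$[\mathbb{B}]$)} yields the local conditions (L1$[\mathbb{B}]$) and (L2$[\mathbb{B}]$) (take $X=Z$, $Y=Z+i+j$ for the former and $X=Z+k$, $Y=Z+i+j$ for the latter) while {\rm (P2$[\mathbb{B}]$)} yields (L3$[\mathbb{B}]$); hence the whole of $\MncavlocS$ may be used freely. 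Following Lemma~\ref{LMmnatexcequicard01} and Lemma~\ref{LMp12toP3}, I let $\mathcal{D}$ be the set of triples $(X,Y,i_{*})$ with $X,Y\in\dom f$, $|X|>|Y|$ and $i_{*}\in X\setminus Y$ for which the down--up term $f(X-i_{*})+f(Y+i_{*})$ and every swap term $f(X-i_{*}+j)+f(Y+i_{*}-j)$ $(j\in Y\setminus X)$ are strictly smaller than $f(X)+f(Y)$, assume $\mathcal{D}\neq\emptyset$, and pick $(X,Y,i_{*})\in\mathcal{D}$ minimising $|X\bigtriangleup Y|$.

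Two preliminary reductions set the stage. If $|X\setminus Y|=1$, then $|X|>|Y|$ forces $Y\subseteq X$ and $X=Y+i_{*}$, so that $f(X-i_{*})+f(Y+i_{*})=f(Y)+f(X)$, contradicting $(X,Y,i_{*})\in\mathcal{D}$; hence $|X\setminus Y|\ge 2$. Next I apply {\rm (P1$[\mathbb{B}]$)} to the pair $(Y,X)$ (admissible since $|Y|<|X|$) to obtain $k\in X\setminus Y$ with $X-k,\,Y+k\in\dom f$ and $f(X)+f(Y)\le f(X-k)+f(Y+k)$; if $k=i_{*}$ this is the down--up option and contradicts $\mathcal{D}$, so $k\neq i_{*}$ and the family $\mathcal{K}=\{k\in(X\setminus Y)\setminus\{i_{*}\}\mid Y+k\in\dom f\}$ is nonempty. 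I then fix a potential $p\in\mathbb{R}\sp{N}$ normalising the set $X$ that carries $i_{*}$: it is defined as in \eqref{mnatlocToP2pjdef}, but augmented by a value $p_{i_{*}}$ so that $f_{p}(X-i_{*})=f_{p}(X-i_{*}+j)=f_{p}(X)$ for all admissible $j\in Y\setminus X$, with the same $\varepsilon$-regularisation for the sets lying on the boundary of $\dom f$. With this normalisation the defining strict inequalities of $\mathcal{D}$ turn into $f_{p}(Y+i_{*})<f_{p}(Y)$ and $f_{p}(Y+i_{*}-j)<f_{p}(Y)$ $(j\in Y\setminus X)$, the analogue of Claim~1 of Lemma~\ref{LMmnatexcequicard01}.

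The reduction step keeps $X$ fixed and enlarges the smaller set: put $Y'=Y+k_{0}$, where $k_{0}\in\mathcal{K}$ maximises $f_{p}(Y+k_{0})$. Since $k_{0}\neq i_{*}$ we retain $i_{*}\in X\setminus Y'$, and $|X\bigtriangleup Y'|=|X\bigtriangleup Y|-1$. It remains to prove $(X,Y',i_{*})\in\mathcal{D}$, i.e. $f_{p}(Y'+i_{*})<f_{p}(Y')$ and $f_{p}(Y'+i_{*}-j)<f_{p}(Y')$ for $j\in Y\setminus X$; the minimality of $|X\bigtriangleup Y|$ (or, when $|X|=|Y'|$, a direct conflict with {\rm (P2$[\mathbb{B}]$)}) then gives the contradiction. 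The down--up inequality is immediate: (L1$[\mathbb{B}]$) with base $Y$ and elements $k_{0},i_{*}$ gives $f_{p}(Y+k_{0}+i_{*})+f_{p}(Y)\le f_{p}(Y+k_{0})+f_{p}(Y+i_{*})$, which together with $f_{p}(Y+i_{*})<f_{p}(Y)$ yields $f_{p}(Y'+i_{*})<f_{p}(Y')$. For the swap inequality I expand $f_{p}(Y+k_{0}+i_{*}-j)$ by (L2$[\mathbb{B}]$) with base $Y-j$ and elements $k_{0},i_{*},j$, obtaining a bound by the larger of $f_{p}(Y+k_{0})+f_{p}(Y+i_{*}-j)$ and $f_{p}(Y+i_{*})+f_{p}(Y+k_{0}-j)$, and try to push both branches below $f_{p}(Y+k_{0})+f_{p}(Y)$ using the strict inequalities above and the maximality of $k_{0}$, exactly as in Claim~3 of Lemma~\ref{LMmnatexcequicard01}.

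The main obstacle is this last swap inequality. In Lemma~\ref{LMmnatexcdiffcard01} the enlarged set entered the critical term only through \emph{removals}, so that the double removal split cleanly by (L1$[\mathbb{B}]$); here the swap of {\rm (P4$[\mathbb{B}]$)} moves $i_{*}$ \emph{into} the smaller set, and the (L2$[\mathbb{B}]$) expansion leaves a mixed term $f_{p}(Y+k_{0}-j)$ that the relation $f_{p}(Y+i_{*}-\,\cdot\,)<f_{p}(Y)$ does not by itself control. The first branch above is fine, since $f_{p}(Y+i_{*}-j)<f_{p}(Y)$; the difficulty is the second branch, which reduces to the auxiliary comparison $f_{p}(Y+k_{0}-j)\le f_{p}(Y+k_{0})$. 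Closing this is the delicate point, and I expect it to require the extremal choice of $k_{0}$ together with one further use of the local exchange---or, failing a direct argument, a separate small-cardinality case ruling out $f_{p}(Y+k_{0}-j)>f_{p}(Y+k_{0})$ by producing an alternative member of $\mathcal{D}$ of smaller $|X\bigtriangleup Y|$---just as the equi-cardinal argument of Lemma~\ref{LMmnatexcequicard01} had to exclude the wrong branch of its maximum.
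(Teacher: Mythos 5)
Your proposal follows the paper's overall strategy (minimal counterexample, potential normalization, enlarging $Y$), but it stops short of a proof at exactly the point you flag, and that gap is real: with your choice $Y'=Y+k_{0}$, every way of splitting the pair $(Y+k_{0}+i_{*}-j,\,Y)$ by an exchange axiom --- whether (L2$[\mathbb{B}]$) with base $Y-j$, or (P1$[\mathbb{B}]$) applied to $(Y,\,Y+k_{0}+i_{*}-j)$, or (P3$[\mathbb{B}]$) --- produces the same two branches, and the branch $f_{p}(Y+i_{*})+f_{p}(Y+k_{0}-j)$ contains the term $f_{p}(Y+k_{0}-j)$ that nothing in your construction controls. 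Your extremal choice of $k_{0}$ cannot help, because $k_{0}$ was chosen to maximize $f_{p}(Y+k)$ over \emph{added} elements $k\in(X\setminus Y)-i_{*}$, whereas the problematic set $Y+k_{0}-j$ differs from $Y+k_{0}$ by a \emph{removed} element $j\in Y\setminus X$; the two maximizations are simply about different families of sets. The fallback you sketch (turning $f_{p}(Y+k_{0}-j)>f_{p}(Y+k_{0})$ into a smaller member of $\mathcal{D}$) also does not go through as stated: it would yield only the single inequality $f_{p}((Y+k_{0}-j)+i_{*})<f_{p}(Y+k_{0}-j)$, not the full package of strict inequalities needed for membership in $\mathcal{D}$.

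The paper's proof closes precisely this hole by making the \emph{removed} element part of the optimization. After obtaining $i_{0}\in X\setminus Y$, $i_{0}\neq i_{*}$, with $Y+i_{0}\in\dom f$ from (P1$[\mathbb{B}]$) (your $k_{0}$), it introduces a null element $\circ$ and chooses $j_{0}\in(Y\setminus X)\sp{\circ}$ maximizing $f_{p}(Y+i_{0}-j)$ over $j\in(Y\setminus X)\sp{\circ}$, setting $Y'=Y+i_{0}-j_{0}$ (so $Y'$ may or may not drop an element). Then, in the critical estimate, it applies the \emph{global} hypotheses (P1$[\mathbb{B}]$) or (P2$[\mathbb{B}]$) to the pair $\bigl(Y+i_{0}+i_{*}-j_{0}-j,\,Y\bigr)$ in such a way that $i_{*}$ is the exchanged element. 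The resulting maximum is over pairings of $i_{*}$ with $j$ or with $j_{0}$, so \emph{both} branches have the form $f_{p}(Y+i_{0}-\cdot)+f_{p}(Y+i_{*}-\cdot)$ with $\cdot\in(Y\setminus X)\sp{\circ}$: the first factor is $\leq f_{p}(Y+i_{0}-j_{0})=f_{p}(Y')$ by the maximality of $j_{0}$, and the second is $<f_{p}(Y)$ by the Claim-1 inequalities. No uncontrolled mixed term ever appears. So the missing idea is not ``one further use of the local exchange'' but a different reduction: maximize over the removed element (null removal allowed) and arrange the exchange so that $i_{*}$, not $i_{0}$, is the element being swapped out of the combined set.
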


\begin{proof}
To prove (P4$[\mathbb{B}]$)
 by contradiction, we assume that
there exists a pair $(X,Y)$ for which
(\ref{mnatconcavexc2})
 fails.
That is, we assume that the set of such pairs
\[
\begin{array}{l}
 \mathcal{D} = \{(X, Y) \mid X, Y \in \dom  f,\  
|X| > |Y|,\ \\
\phantom{\mathcal{D} = \{(X, Y) \mid \ }
\exists i_{*} \in X \setminus Y \ \mbox{s.t.} \  
f(X) + f(Y)  > f(X- i_{*}) + f(Y+ i_{*})
\\
\phantom{\mathcal{D} = \{(X, Y) \mid \ }
\mbox{and }
f(X) + f(Y)  > f(X- i_{*} +j ) + f(Y+ i_{*}-j )
\mbox{ for all } j \in Y \setminus X 
\}
\end{array}
\] 
is nonempty.
Take a pair $(X,Y) \in \mathcal{D}$ with 
$|X \bigtriangleup Y| = |X \setminus Y| + |Y \setminus X|$
minimum,
and fix $i_{*} \in X \setminus Y$ 
appearing in the definition of $\mathcal{D}$.

For a fixed $\varepsilon > 0$, define 
$p \in \mathbb{R}\sp{N}$ as follows.
 The component $p_{i_{*}}$ is defined by
\begin{align*} 
 &
\begin{array}{rcl}
p_{i_{*}}  & = & \left\{
\begin{array}{ll}
f(X) - f(X- i_{*})  & (X- i_{*} \in \dom  f),
\\
f(Y+ i_{*}) - f(Y) + \varepsilon 
 & (X- i_{*} \not\in \dom  f,\ Y+ i_{*} \in \dom  f),
\\
0 & (X- i_{*} \not\in \dom  f,\ Y+ i_{*} \not\in \dom  f).
\end{array}
\right.
\end{array}
\end{align*}
 The component $p_j$ for each $j \in Y \setminus X$ is defined by
\[
p_{j}  =  \left\{
\begin{array}{ll}
f(X) - f(X- i_{*} +j) + p_{i_{*}}  & 
(X- i_{*} +j \in \dom  f),
\\
f(Y+ i_{*} -j) - f(Y) + p_{i_{*}} + \varepsilon 
 &  (X- i_{*} +j \not\in \dom  f,\ Y+ i_{*} -j \in \dom  f),\\
0 & (X- i_{*} +j \not\in \dom  f,\ Y+ i_{*} -j \not\in \dom  f).
\end{array}
\right.
\]
 We set $p_j=0$ for all other components of $p$.

\medskip

Claim 1:
\begin{eqnarray}
f_{p}(X -  i_{*} ) & = & f_{p}(X)
   \qquad (X -  i_{*} \in \dom  f),
\label{vmexlocl3c13-b} \\
f_{p}(X -  i_{*}  + j) & = & f_{p}(X) 
   \qquad (j \in Y \setminus X,\ X -  i_{*}  + j \in \dom  f),
\label{vmexlocl3c11-b} \\
f_{p}(Y +  i_{*} )  & < & f_{p}(Y) , 
\label{vmexlocl3c14-b} \\
f_{p}(Y +  i_{*}  - j) 
       & < & f_{p}(Y) \qquad (j \in Y \setminus X).
\label{vmexlocl3c12-b}
\end{eqnarray}

\noindent (Proof of Claim~1) 
Similar to the proof of Claim 1 in the proof of Lemma \ref{LMmnatexcequicard01}.

\OMIT{
The equalities (\ref{vmexlocl3c13-b}) and (\ref{vmexlocl3c11-b}) 
are obvious from the definition of $p$.
If $X-i_{*} \in \dom f$,
(\ref{vmexlocl3c14-b}) 
follows from (\ref{vmexlocl3c13-b})
and $f_{p}(X) + f_{p}(Y) > f_{p}(X - i_{*}) + f_{p}(Y +i_{*})$;
otherwise,
(\ref{vmexlocl3c14-b}) follows from the fact that
$f_{p}(Y +i_{*}) - f_{p}(Y) = -\varepsilon$ or $-\infty$
depending on whether $Y+i_{*} \in \dom f$ or not.
Similarly,
(\ref{vmexlocl3c12-b}) follows from 
(\ref{vmexlocl3c11-b}) and
$f_{p}(X) + f_{p}(Y) > f_{p}(X - i_{*} + j) + f_{p}(Y +i_{*} - j)$
 if $X-i_{*}+j \in \dom f$;
otherwise,
(\ref{vmexlocl3c12-b}) follows from the fact that
$f_{p}(Y +i_{*} - j) - f_{p}(Y) = -\varepsilon$ or $-\infty$
depending on whether $Y+i_{*}-j \in \dom f$ or not.
}

\medskip

To write (\ref{vmexlocl3c13-b}) and (\ref{vmexlocl3c11-b})
in one formula, it is convenient to introduce 
a special symbol, say,
$\circ$  
to denote a null element such that 
$Z + \circ = Z$ 
for any $Z \subseteq N$.
Then (\ref{vmexlocl3c13-b}) and (\ref{vmexlocl3c11-b}) together
are expressed as 
\begin{eqnarray}
f_{p}(X -  i_{*}  + j) & = & f_{p}(X) 
   \qquad (j \in (Y \setminus X)\sp{\circ},
    \ X -  i_{*}  + j \in \dom  f),
\label{vmexlocl3c113-b} 
\end{eqnarray}
where, for any $Z \subseteq N$,
``$j \in Z\sp{\circ}$'' means that
$j \in Z$ or $j = \circ$.
Similarly, with the understanding that
 $Z - \circ = Z$ for any $Z \subseteq N$,
(\ref{vmexlocl3c14-b}) and (\ref{vmexlocl3c12-b}) are expressed  as 
\begin{eqnarray}
f_{p}(Y +  i_{*}  - j) 
       & < & f_{p}(Y) \qquad (j \in (Y \setminus X)\sp{\circ}).
\label{vmexlocl3c124-b}
\end{eqnarray}

\medskip

Claim 2:
There exists $i_{0} \in X \setminus Y - i_{*}$ and
$j_{0} \in (Y \setminus X)\sp{\circ}$ 
such that $Y + i_{0} - j_{0} \in \dom f$ and
\begin{equation}
\label{vmexlocl3a1-b}
f_{p}(Y + i_{0} - j_{0}) \geq  f_{p}(Y + i_{0} - j)
\qquad ( j \in (Y \setminus X)\sp{\circ} ).
\end{equation}

\noindent (Proof of Claim 2)
 Since $|X| > |Y|$, 
(P1$[\mathbb{B}]$)
implies the existence of $i_{0} \in X \setminus Y$ with
$ f(X) + f(Y) \le  f(X- i_{0} ) + f(Y +  i_{0})$,
where $i_{0} \ne i_{*}$ by $(X, Y) \in \mathcal{D}$.
This inequality implies $Y+i_{0}-j \in \dom f$ with $j = \circ$.
Any $j \in (Y \setminus X)\sp{\circ}$ 
that maximizes $f_{p}(Y + i_{0} - j)$
serves as $j_{0}$.

\medskip

Claim 3:
For $Y'= Y  + i_{0} - j_{0}$ we have
\begin{align} 
 f_{p}(X) + f_{p}(Y')  & > f_{p}(X -  i_{*}  + j) + f_{p}(Y' +  i_{*}  - j)
 \qquad (j \in (Y' \setminus X)\sp{\circ}).
\label{vmexloc32-b}
\end{align}

\noindent (Proof of Claim 3)
First note that 
$i_{*} \in X \setminus Y'$ by the choice of $Y'$.
 Since the inequality (\ref{vmexloc32-b}) is obvious 
when $X - i_{*} + j \not\in \dom f$,
we assume that $X -  i_{*}  + j \in \dom f$.
Then we have 
$f_{p}(X) = f_{p}(X - i_{*} + j)$ 
by (\ref{vmexlocl3c113-b}). 
We also have
\begin{align*}
& f_{p}(Y'+ i_{*} -j)
 =   [ f_{p}(Y+ i_{0} + i_{*} - j_{0} -j) + f_{p}(Y) ] - f_{p}(Y)
\\
& \leq  
 \max\{f_{p}(Y+ i_{0} - j_{0}) + f_{p}(Y+ i_{*} -j),
 f_{p}(Y+ i_{0} -j ) + f_{p}(Y+ i_{*} - j_{0} )\} 
  - f_{p}(Y)  
\\
& \leq  
 f_{p}(Y+ i_{0} - j_{0}) +
\max\{f_{p}(Y+ i_{*}-j)  - f_{p}(Y),\ 
f_{p}(Y+ i_{*} - j_{0} ) - f_{p}(Y)\} 
\\
& < f_{p}(Y+ i_{0} - j_{0})  = f_{p}(Y'),
\end{align*}
where the first inequality is by 
(P1$[\mathbb{B}]$) or (P2$[\mathbb{B}]$),
and the second and third inequalities follow
 from (\ref{vmexlocl3a1-b}) and (\ref{vmexlocl3c124-b}),
respectively.
Therefore, (\ref{vmexloc32-b}) holds.

\medskip

By $|X| > |Y|$ and $|Y'| \le |Y|+1$ we have $|X| \geq |Y'|$,
in which the possibility of equality is excluded.
Indeed, if $|X| = |Y'|$, then 
(\ref{vmexloc32-b}) contradicts (P2$[\mathbb{B}]$) for $(X, Y', i_{*})$.
Therefore, $|X|>|Y'|$ holds.
 Hence, we have
$(X, Y') \in \cal D$ by (\ref{vmexloc32-b}),
which is a contradiction to the choice of $(X, Y)$, since  
$|X' \bigtriangleup Y| \leq |X \bigtriangleup Y| -1$.
Therefore, $\mathcal{D}$ must be empty,
which means that 
(P4$[\mathbb{B}]$)
is satisfied.
\end{proof}


\subsection{Proof of Theorem~\ref{THmnatcavlocexc01hered}}
\label{SCproofmnatlocexc01hered}

We prove that, 
when $\dom f$ is an \Mnat-convex family containing the empty set,
$f$ is \Mnat-concave
if and only if it satisfies the first two conditions
(L1$[\mathbb{B}]$) and (L2$[\mathbb{B}]$) 
of the local exchange property $\MncavlocS$.
 By Theorem~\ref{THmnatcavlocexc01}, it suffices to show that
(L2$[\mathbb{B}]$)
implies (L3$[\mathbb{B}]$)
under this assumption.

To be specific, we show (\ref{mnatconcavexc22locAlpha}):
$ \alpha_{12}+\alpha_{34} \leq 
 \max\{\alpha_{13}+\alpha_{24},\alpha_{14}+\alpha_{23}\}$
in the proof of Proposition~\ref{PRmconcavlocexc01onlyif},
where $\alpha_{ij}=f(X+i+j)$.
We may assume 
$\alpha_{12} > - \infty$ and $\alpha_{34} > - \infty$,
since otherwise
the inequality holds trivially.
Then we have
$\alpha_{i} =f(X+i) > - \infty$ 
for $i \in \{ 1,2,3,4 \}$
by the assumption on $\dom f$,
whereas 
$\alpha_{ij} \in \Rminf$.
 By (L2$[\mathbb{B}]$) in (\ref{mnatconcavexc21loc}) we have
$ \alpha_{12}+\alpha_{3} \leq 
 \max\{\alpha_{13}+\alpha_{2},\alpha_{23}+\alpha_{1}\}$,
where
we may assume 
\begin{equation} \label{Mnatlocproofeqn3}
\alpha_{12}+\alpha_{3} \leq \alpha_{13}+\alpha_{2}
\end{equation}
by symmetry $1 \leftrightarrow 2$.
Consider the following three pairs of inequalities:
\begin{eqnarray}
 \alpha_{34}+\alpha_{2} & \leq & \alpha_{24}+\alpha_{3},
\label{Mnatlocproofeqn4}
\\
 \alpha_{34}+\alpha_{2} & \leq & \alpha_{23}+\alpha_{4};
\label{Mnatlocproofeqn42}
\\
 \alpha_{12}+\alpha_{4} & \leq & \alpha_{14}+\alpha_{2},
\label{Mnatlocproofeqn5}
\\
 \alpha_{12}+\alpha_{4} & \leq & \alpha_{24}+\alpha_{1};
\label{Mnatlocproofeqn52}
\\
 \alpha_{34}+\alpha_{1} & \leq & \alpha_{13}+\alpha_{4},
\label{Mnatlocproofeqn6}
\\
 \alpha_{34}+\alpha_{1} & \leq & \alpha_{14}+\alpha_{3} .
\label{Mnatlocproofeqn62}
\end{eqnarray}
The condition
(L2$[\mathbb{B}]$)  
implies the following:
(i) (\ref{Mnatlocproofeqn4}) or (\ref{Mnatlocproofeqn42}) (or both) holds,%
\footnote{
We cannot assume (\ref{Mnatlocproofeqn4})
based on the apparent symmetry $3 \leftrightarrow 4$
in (\ref{Mnatlocproofeqn4}) and (\ref{Mnatlocproofeqn42}),
because this symmetry is not present in (\ref{Mnatlocproofeqn3}).
} 
(ii)  (\ref{Mnatlocproofeqn5}) or (\ref{Mnatlocproofeqn52}) (or both) holds,
(iii)  (\ref{Mnatlocproofeqn6}) or (\ref{Mnatlocproofeqn62}) (or both) holds.
 Hence, it suffices to consider the following four cases:
Case 1: 
(\ref{Mnatlocproofeqn4}) holds;
Case 2: 
(\ref{Mnatlocproofeqn42}) and (\ref{Mnatlocproofeqn5}) hold; 
Case 3: 
(\ref{Mnatlocproofeqn42}), (\ref{Mnatlocproofeqn52}), and (\ref{Mnatlocproofeqn6}) hold;
Case 4: 
(\ref{Mnatlocproofeqn42}), (\ref{Mnatlocproofeqn52}), and (\ref{Mnatlocproofeqn62}) hold.

 In Case 1, the addition of 
(\ref{Mnatlocproofeqn3}) and (\ref{Mnatlocproofeqn4}) 
yields
$\alpha_{12}+\alpha_{34} \leq \alpha_{13}+\alpha_{24}$,
which implies (\ref{mnatconcavexc22locAlpha}).
In Case 2, the addition of 
(\ref{Mnatlocproofeqn42}) and (\ref{Mnatlocproofeqn5}) 
yields
$\alpha_{34} + \alpha_{12} \leq \alpha_{23}+\alpha_{14}$. 
In Case 3, the addition of 
 (\ref{Mnatlocproofeqn52}) and (\ref{Mnatlocproofeqn6}) 
yields
$\alpha_{12}+\alpha_{34} \leq \alpha_{24}+\alpha_{13}$.
In Case 4, the addition of 
(\ref{Mnatlocproofeqn3}), (\ref{Mnatlocproofeqn42}), (\ref{Mnatlocproofeqn52}), and (\ref{Mnatlocproofeqn62})
yields
\[
2 (\alpha_{12}+\alpha_{34})
  \leq  
\alpha_{13}+\alpha_{24}+\alpha_{14}+\alpha_{23}
 \leq  
2 \max\{\alpha_{13}+\alpha_{24},\alpha_{14}+\alpha_{23}\},
\]
which shows (\ref{mnatconcavexc22locAlpha}).
This completes the proof of Theorem~\ref{THmnatcavlocexc01hered}.

\subsection{Proof of Theorem~\ref{THmnatcavP1hered01}}
\label{SCproofmnatcavP1hered01}

We prove that,  when $\dom f$ contains the empty set,
$f$ is \Mnat-concave
if and only if it satisfies (P1$[\mathbb{B}]$).
The ``only-if'' part
is already shown in Theorem~\ref{THmconcavcardexc01} (1).
We prove the ``if'' part by means of 
a local exchange theorem, Theorem~\ref{THmnatcavlocexc01hered}
in Section~\ref{SCexchange01loc}, 
which has already been proved in Section~\ref{SCproofmnatlocexc01hered}.

The first two conditions
(L1$[\mathbb{B}]$) and (L2$[\mathbb{B}]$) 
of the local exchange property $\MncavlocS$
are immediate consequences of (P1$[\mathbb{B}]$).
In addition, $\dom f$ is an \Mnat-convex family, as shown below.
Therefore, $f$ is \Mnat-concave by Theorem~\ref{THmnatcavlocexc01hered}.

For \Mnat-convexity of $\dom f$,
we show that $\mathcal{F}= \dom f$
satisfies the axioms for independent sets of a matroid:
\hbox{(I-1)} 
$ \emptyset \in \mathcal{F}$,
\ 
\hbox{(I-2)}  
$X \subseteq  Y \in \mathcal{F} \  \Rightarrow \  X \in \mathcal{F}$,
\ 
\hbox{(I-3)} 
 $X, Y\in \mathcal{F}, \ 
     |X| <|Y|  \  \Rightarrow \  X + j \in \mathcal{F}$  
         for some $j \in Y \setminus X$.
Here (I-1) holds by assumption and 
(I-3) is immediate from (P1$[\mathbb{B}]$).
The second property (I-2) can be shown 
as follows.
By condition (P1$[\mathbb{B}]$) we have:
\begin{align} 
& 
X, Y \in \mathcal{F}, \ X \subsetneqq Y
 \ \Longrightarrow \
\mbox{there exists $j \in Y \setminus X$
such that $X + j, Y  - j  \in \mathcal{F}$,
}
\label{mnsetP1}
\end{align}
which implies,
by Lemma~\ref{LMgenbox01} below,
that
\begin{align} 
&
X, Y \in \mathcal{F},  \  X \subseteq Z \subseteq  Y
 \ \Longrightarrow \
Z \in \mathcal{F}.
\label{mnsetbox}
\end{align}
Since $\emptyset \subseteq X \subseteq  Y$
and $\emptyset, Y \in \mathcal{F}$ in (I-2),
we have $X \in \mathcal{F}$.
This completes the proof of Theorem~\ref{THmnatcavP1hered01}.

\begin{lemmaM}  \label{LMgenbox01}
If a set family $\mathcal{F}$ satisfies 
{\rm (\ref{mnsetP1})}, then it satisfies {\rm (\ref{mnsetbox})}.
\end{lemmaM}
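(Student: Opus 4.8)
The plan is to prove the implication (\ref{mnsetbox}) by induction on the gap size $|Y \setminus X| = |Y| - |X|$, for $X, Y \in \mathcal{F}$ with $X \subseteq Z \subseteq Y$. The base case $|Y \setminus X| \leq 1$ is immediate, since then the only sets $Z$ with $X \subseteq Z \subseteq Y$ are $Z = X$ and $Z = Y$, both of which lie in $\mathcal{F}$ by assumption.

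For the inductive step I would assume $|Y \setminus X| \geq 2$, and I may also assume $X \subsetneqq Z \subsetneqq Y$, as otherwise $Z \in \{ X, Y \} \subseteq \mathcal{F}$ trivially. Since $X \subsetneqq Y$, I apply (\ref{mnsetP1}) to the pair $(X, Y)$ to obtain an element $j \in Y \setminus X$ with $X + j \in \mathcal{F}$ and $Y - j \in \mathcal{F}$. The crux is a case split according to whether $j$ belongs to $Z$. If $j \in Z$, then $X + j \subseteq Z \subseteq Y$ with $X + j, Y \in \mathcal{F}$ and $|Y \setminus (X + j)| = |Y \setminus X| - 1$. If $j \notin Z$, then $j \in Y \setminus Z$, so $X \subseteq Z \subseteq Y - j$ with $X, Y - j \in \mathcal{F}$ and $|(Y - j) \setminus X| = |Y \setminus X| - 1$. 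In either case the reduced pair still sandwiches $Z$ and has strictly smaller gap, so the induction hypothesis gives $Z \in \mathcal{F}$.

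This is a clean sandwich/interval induction and I do not expect a real obstacle. The only subtlety is that (\ref{mnsetP1}) supplies a particular element $j$ over which we have no control; the case split on $j \in Z$ versus $j \notin Z$ handles both possibilities uniformly, since adjoining $j$ to $X$ (when $j \in Z$) or deleting $j$ from $Y$ (when $j \notin Z$) always preserves the inclusion $X' \subseteq Z \subseteq Y'$ while decreasing $|Y' \setminus X'|$ by one. Thus the recursion terminates and delivers the claim.
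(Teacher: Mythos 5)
Your proof is correct and is essentially identical to the paper's own argument: the same induction on $|Y \setminus X|$, the same application of (\ref{mnsetP1}) to the pair $(X,Y)$, and the same case split on whether the exchanged element $j$ lies in $Z$, replacing $(X,Y)$ by $(X+j,Y)$ or $(X,Y-j)$ accordingly. No gaps.
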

\begin{proof}
We show (\ref{mnsetbox}) by induction on $|Y \setminus X|$.
(\ref{mnsetbox}) is trivially true if $|Y \setminus X| \leq 1$.
Assume $|Y \setminus X| \geq 2$,
 take $j \in Y \setminus X$ in (\ref{mnsetP1}),  and
consider $Z$ satisfying $X \subsetneqq Z \subsetneqq Y$.
If $j \in Z$, then $X + j \subseteq Z \subseteq Y$
with $|Y \setminus (X+j)| = |Y \setminus X| -1$.
If $j \not\in Z$, then $X \subseteq Z \subseteq Y-j$
with $|(Y-j) \setminus X| = |Y \setminus X| -1$.
In either case we obtain $Z \in \mathcal{F}$ by the induction hypothesis.
\end{proof}


\section{M-concave Set Functions (Valuated Matroids)}
\label{SCmcavsetfn}

\subsection{Definition}

Let $f: 2\sp{N} \to \Rminf$ be
a real-valued set function on 
$N = \{ 1,2,\ldots, n \}$ and 
$\mathcal{B} = \dom f$ be the effective domain of $f$.

We say that a function
$f$ 
is a {\em valuated matroid} (or {\em matroid valuation}), 
if, for any $X, Y \in \mathcal{B}$ and $i \in X \setminus Y$,
there exists some $j \in Y \setminus X$ such that
$X - i +j \in \mathcal{B}$, $ Y + i -j  \in \mathcal{B}$ and
\begin{equation}  \label{valmatexc1}
f( X) + f( Y ) \leq  f( X - i + j) + f( Y + i -j).
\end{equation}
This property is referred to as the {\em exchange property}.
A valuated matroid is also called an {\em M-concave set function}.
In this paper we use this terminology 
to emphasize its concavity aspects.

An M-concave function
can also be defined 
without explicit reference to its effective domain
by considering 
by the following expression of the exchange property:%
\begin{description}
\item[\McavSb] 
For any $X, Y \subseteq N$ and $i \in X \setminus Y$, we have
\begin{align}
f( X) + f( Y )   &\leq 
 \max_{j \in Y \setminus X}  \{ f( X - i + j) + f( Y + i -j) \} .
\label{valmatexc2}
\end{align}
\end{description}

The effective domain of an M-concave function
is equipped with a nice combinatorial structure.
Let $\mathcal{B}$ denote the effective domain of an M-concave function $f$.
As a consequence of the exchange property $\McavS$ of function $f$,
the set family $\mathcal{B}$
satisfies the following exchange property:
\begin{description}
\item[\BvexSb]
For any $X, Y \in \mathcal{B}$ and $i \in X \setminus Y$, 
there exists some $j \in Y \setminus X$ such that
$X - i +j \in \mathcal{B}$ and $ Y + i -j  \in \mathcal{B}$.
\end{description}
This means that $\mathcal{B}$ forms the family of bases of a matroid.
We often refer to 
a set family $\mathcal{B}$ as an {\em M-convex family}
if it is a nonempty family and satisfies the exchange property $\BvexS$.
Therefore, an M-convex family is a synonym of the base family of a matroid.

A set family $\mathcal{B}$ satisfying $\BvexS$
consists of equi-cardinal subsets, that is,
\begin{align} 
X, Y \in \mathcal{B} 
\ \Longrightarrow \
|X| = |Y|
\label{msetequicard}
\end{align}
holds if $\mathcal{B}$ satisfies the exchange property $\BvexS$.
In view of the importance in our context,
we state this fact as a proposition with a formal proof,
although this is well known in matroid theory.

\begin{propositionM} \label{PRmsetequicard01}
An M-convex family consists of equi-cardinal subsets.
\end{propositionM}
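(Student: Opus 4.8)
The plan is to argue directly by contradiction, via a minimal counterexample, using only the ``one-sided'' consequence of $\BvexS$ that $X-i+j \in \mathcal{B}$; the companion membership $Y+i-j \in \mathcal{B}$ is not needed here. Suppose $\mathcal{B}$ contained members of two distinct cardinalities. Among all pairs $X, Y \in \mathcal{B}$ with $|X| < |Y|$ I would choose one for which $|X \setminus Y|$ is as small as possible. The key observation is that the exchange guaranteed by $\BvexS$ preserves the cardinality of the set it modifies: applying $\BvexS$ to $X$, $Y$, and some $i \in X \setminus Y$ returns $j \in Y \setminus X$ with $X - i + j \in \mathcal{B}$, and since it deletes $i \in X \setminus Y$ and inserts $j \in Y$, it keeps $|X - i + j| = |X| < |Y|$ while replacing $X \setminus Y$ by $(X \setminus Y) - i$.

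Thus, when $X \setminus Y \neq \emptyset$, the pair $(X - i + j,\, Y)$ again has unequal cardinalities but strictly smaller $|\,(X-i+j) \setminus Y| = |X \setminus Y| - 1$, contradicting minimality. Since this reduction requires $X \setminus Y \neq \emptyset$, the plan splits into two cases according to the value of $|X \setminus Y|$ at the minimizing pair. If $|X \setminus Y| \geq 1$, the reduction above applies and yields the contradiction directly. If instead $|X \setminus Y| = 0$, then $X \subsetneq Y$ because $|X| < |Y|$, and I would invoke $\BvexS$ with the roles reversed, applied to $Y$, $X$, and any $j_{0} \in Y \setminus X$: the property asserts the existence of an exchange partner lying in $X \setminus Y$, which is empty, so this case cannot occur. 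In either case the assumed counterexample is impossible, so all members of $\mathcal{B}$ share a common cardinality, which is exactly (\ref{msetequicard}).

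I expect the only genuine subtlety to be the choice of potential function. It is tempting to minimize the full symmetric difference $|X \bigtriangleup Y|$, but a single exchange merely moves $i$ and $j$ to the opposite sides of the symmetric difference and leaves $|X \bigtriangleup Y|$ unchanged, so that quantity fails to decrease; restricting attention to $|X \setminus Y|$ of the smaller set is precisely what makes the induction go through, and isolating the degenerate case $X \setminus Y = \emptyset$ is what prevents the reduction from stalling. As an alternative route, since $\BvexS$ implies $\BnvexS$, one could instead apply property (\ref{Fconnected<1}) of Proposition~\ref{PRmnsetconnected01} to strip elements off the larger set one at a time; but the self-contained minimal-counterexample argument above is shorter and keeps the section independent of that proposition.
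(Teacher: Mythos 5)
Your argument is correct, and it is essentially the paper's proof in dual form: the paper also performs an induction driven by a single application of $\BvexS$, except that it modifies $Y$ rather than $X$ --- using the companion membership $Y+i-j \in \mathcal{B}$, which preserves $|Y|$ --- and it inducts on $|X \bigtriangleup Y|$; your degenerate case $X \setminus Y = \emptyset$ is absorbed there simply by relabeling, since the conclusion $|X| = |Y|$ is symmetric in $X$ and $Y$, while your version handles it by observing that $\BvexS$ applied to a proper containment demands an exchange element from an empty set. Both are valid.

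One remark in your closing paragraph is wrong, however, and worth correcting because it misjudges exactly the route the paper takes. Replacing a \emph{single} set by its exchanged version does decrease the symmetric difference: for $X' = X - i + j$ with $i \in X \setminus Y$ and $j \in Y \setminus X$ you have $X' \setminus Y = (X \setminus Y) - i$ and $Y \setminus X' = (Y \setminus X) - j$, hence $|X' \bigtriangleup Y| = |X \bigtriangleup Y| - 2$; the same holds for the paper's replacement $Y' = Y + i - j$. The quantity $|X \bigtriangleup Y|$ stays constant only when both sets are replaced simultaneously by $X - i + j$ and $Y + i - j$, which neither your reduction nor the paper's performs. So inducting on the full symmetric difference works perfectly well --- it is precisely the paper's induction --- and your potential $|X \setminus Y|$ (restricted to pairs with $|X| < |Y|$) is merely another valid choice, not the one that ``makes the induction go through.''
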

\begin{proof}
We show (\ref{msetequicard})
by induction on 
$|X \bigtriangleup Y| = |X \setminus Y| + |Y \setminus X|$.
If $|X \bigtriangleup Y| = 0$, (\ref{msetequicard}) is trivially true.
If $|X \bigtriangleup Y| \geq 1$,
we may assume $X \setminus Y \not= \emptyset$.
Take any $i \in X \setminus Y$. 
By $\BvexS$, $Y + i - j \in \mathcal{B}$ for some  $j \in Y \setminus X$.
For $Y' = Y + i - j $ we have
$|X \bigtriangleup Y'| < |X \bigtriangleup Y|$,
and hence $|X| = |Y'|$ by the induction hypothesis.
Since $|Y| = |Y'|$, this shows $|X| = |Y|$.
\end{proof}

\begin{remark} \rm  \label{RMvalmat}
The concept of valuated matroid is introduced by Dress--Wenzel \cite{DW90=valmat,DW92=valmat}.
The subsequent development leading to discrete convex analysis
is expounded in Murota \cite[Chapter 5]{Mspr2000=valmat}.
\finbox
\end{remark}

\subsection{Relation between M- and \Mnat-concave functions}
\label{SCrelmmncavsetfn}

We show that, while M-concave functions 
are a special case of 
\Mnat-concave functions,
they are in fact equivalent concepts
in the sense to be formulated in 
Proposition~\ref{PRmnatequicardvalmat}.

First, M-concave functions form a subclass of
\Mnat-concave functions with equi-cardinal effective domains.

\begin{propositionM} \label{PRmcav=mnatcav+equicard}
A set function $f$ is M-concave
 if and only if it is an \Mnat-concave function and 
$|X| = |Y|$ for all $X, Y \in \dom f$.
\end{propositionM}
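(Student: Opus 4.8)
The plan is to read off both implications directly from a side-by-side comparison of the two exchange axioms \McavS and \MncavS, using cardinality counting to decide which branch of \MncavS is active.

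First, for the ``only if'' direction I would suppose $f$ is M-concave and establish the two asserted properties separately. The equi-cardinality of $\dom f$ need not be proved from scratch: as already recorded in the definition of M-concavity, \McavS forces the effective domain $\mathcal{B} = \dom f$ to obey \BvexS, and Proposition~\ref{PRmsetequicard01} then yields that $\mathcal{B}$ consists of equi-cardinal subsets. The \Mnat-concavity is immediate, because the right-hand side of \McavS in \eqref{valmatexc2} is exactly the second argument of the maximum in \MncavS (see \eqref{mnatconcavexc2}); hence \McavS implies \MncavS term by term.

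For the ``if'' direction I would assume $f$ is \Mnat-concave with an equi-cardinal effective domain and derive \McavS. Fixing $X, Y \subseteq N$ and $i \in X \setminus Y$, I may assume $X, Y \in \dom f$ (otherwise $f(X) + f(Y) = -\infty$ and \eqref{valmatexc2} is trivial), so that $|X| = |Y|$. The key point is that the first option inside the maximum of \MncavS is forced to be inactive: since $|X - i| = |X| - 1$ and $|Y + i| = |X| + 1$ both differ from the common cardinality of the members of $\dom f$, neither $X - i$ nor $Y + i$ lies in $\dom f$, whence $f(X - i) + f(Y + i) = -\infty$. The inequality \MncavS therefore reduces to $f(X) + f(Y) \le \max_{j \in Y \setminus X}\{f(X - i + j) + f(Y + i - j)\}$, which is precisely \McavS.

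The argument is short and I do not anticipate a real obstacle; the only delicate point is bookkeeping with the $-\infty$ conventions, namely confirming that in the equi-cardinal case option (i) of the \Mnat-concave exchange evaluates to $-\infty$ while $f(X) + f(Y)$ is finite, so that the genuine one-for-one swap of \McavS is the only way the inequality can be satisfied.
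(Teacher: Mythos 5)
Your proposal is correct and follows essentially the same route as the paper: equi-cardinality of $\dom f$ via Proposition~\ref{PRmsetequicard01}, and then the observation that for an equi-cardinal effective domain the branch (\ref{mnatcav1}) of \MncavS is void (evaluates to $-\infty$), so \MncavS and \McavS coincide. Your write-up merely spells out in more detail the $-\infty$ bookkeeping that the paper's one-line proof leaves implicit.
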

\begin{proof}
As is already noted, the effective domain 
of an M-concave function consists of equi-cardinal sets
(Proposition~\ref{PRmsetequicard01}).
For a function $f$ with equi-cardinal $\dom f$,
$\MncavS$ is equivalent to $\McavS$
since (\ref{mnatcav1}) cannot happen.
\end{proof}

Second, we discuss the essential equivalence of M- and \Mnat-concave functions.
For a function
$f: 2^{N} \to \Rminf$,
we associate a function 
$\tilde{f}$
on an equi-cardinal family on a larger set $\tilde{N}$. 
Denote by $r$ and $r'$ the maximum and minimum, respectively, of $|X|$ for $X \in \dom f$.
Let
$s \geq r-r'$ and
$S = \{ n+1,n+2,\ldots, n+s \}$.
We enlarge the underlying set to  
$\tilde{N} = N \cup S = \{ 1,2,\ldots, \tilde n \}$,
where $\tilde n =n+s$, and define
$\tilde{f}: 2^{\tilde N} \to \Rminf$ by
\begin{align} \label{assocMdef} 
\tilde{f}(Z)  =
   \left\{  \begin{array}{ll}
    f(Z \cap N)         &   (|Z| = r) ,     \\
   -\infty    &   (\mbox{otherwise}) ,  \\
                     \end{array}  \right.
\end{align}
for which $\dom \tilde{f}$ is an equi-cardinal family.
For $X \subseteq N$ and $U \subseteq S$,
we have $\tilde{f}(X \cup U) = f(X)$ 
if $|X|+|U|=r $.
Therefore, if we want to maximize $f$, for example, we may maximize 
the associated function $\tilde{f}$  to obtain an optimal solution for $f$.

We illustrate the construction (\ref{assocMdef}) by a simple example.

\begin{example} \rm  \label{EXmnatcvU32toM}
Consider a function $f$ 
defined on $N=\{ 1,2,3 \}$ 
as
$f(\emptyset)=f(\{ 1 \})=0$,
$f(\{ 2 \})=f(\{ 3 \})=f(\{ 1,2 \})=f(\{ 1,3 \})=f(\{ 2,3 \})=1$,
and
$f(\{ 1,2,3 \})=-\infty$.
We have 
$r=2$ and $r'=0$, and hence
we can take $s =2$, $S = \{ 4,5 \}$, and $\tilde{N} = \{ 1,2,3,4,5 \}$.
The corresponding function $\tilde{f}$ is given by 
\begin{align*}
& \tilde{f}(\{ 4,5 \})=\tilde{f}(\{ 1,k \})=0,
\quad
\tilde{f}(\{ 2,k \}) =\tilde{f}(\{ 3,k \}) =1
\quad (k=4,5),
\\
& 
\tilde{f}(\{ 1,2 \}) =\tilde{f}(\{ 1,3 \}) =\tilde{f}(\{ 2,3 \}) =1,
\quad
\tilde{f}(\{ 1,2,3 \})=-\infty.
\end{align*}
This function $f$ is \Mnat-concave, satisfying the condition $\MncavS$,
while the corresponding function $\tilde{f}$ is M-concave, 
satisfying the condition $\McavS$.
\finbox
\end{example}

\begin{propositionM}[\cite{Mmultexcstr18=valmat}]  \label{PRmnatequicardvalmat}
A set function $f$ is \Mnat-concave 
if and only if $\tilde{f}$ is M-concave.
\end{propositionM}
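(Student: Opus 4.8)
The plan is to prove both directions by translating the \Mnat-exchange property $\MncavS$ for $f$ into the M-exchange property $\McavS$ for $\tilde{f}$, exploiting the way the dummy elements in $S$ absorb the cardinality slack. The key observation is that $\dom\tilde{f}$ consists exactly of the sets $Z\subseteq\tilde N$ with $|Z|=r$ and $Z\cap N\in\dom f$, so that every $Z\in\dom\tilde f$ decomposes uniquely as $Z=X\cup U$ with $X=Z\cap N\in\dom f$ and $U=Z\cap S$, where $|U|=r-|X|$; since $|X|\le r$ always and $s\ge r-r'$ provides enough dummies, such a $U\subseteq S$ exists precisely when $|X|\le r$, i.e. for every $X\in\dom f$. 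Thus there is a natural correspondence between members of $\dom f$ and members of $\dom\tilde f$ (many-to-one in the $U$-coordinate), and $\tilde f(X\cup U)=f(X)$ throughout.

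For the ``if'' direction (assume $\tilde f$ is M-concave, show $f$ is \Mnat-concave), I would verify $\MncavS$ for $f$ directly. Take $X,Y\in\dom f$ and $i\in X\setminus Y$. Choose dummy sets $U,V\subseteq S$ with $|X|+|U|=|Y|+|V|=r$, so that $\tilde X=X\cup U$ and $\tilde Y=Y\cup V$ lie in $\dom\tilde f$ with equal cardinality $r$, and $i\in\tilde X\setminus\tilde Y$ (arranging $U\cap V=\emptyset$ or handling the overlap so that $i$ is genuinely in the symmetric difference). Apply $\McavS$ to $\tilde f$, $\tilde X$, $\tilde Y$, $i$: there is $\tilde\jmath\in\tilde Y\setminus\tilde X$ with $\tilde f(\tilde X)+\tilde f(\tilde Y)\le \tilde f(\tilde X-i+\tilde\jmath)+\tilde f(\tilde Y+i-\tilde\jmath)$. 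Now split on whether $\tilde\jmath\in N$ or $\tilde\jmath\in S$. If $\tilde\jmath=j\in Y\setminus X\subseteq N$, translating back gives exactly case (ii) of $\MncavS$, inequality \eqref{mnatcav2}. If $\tilde\jmath\in S$ (a dummy), then removing it from $\tilde X$ and adding it to $\tilde Y$ changes only the dummy coordinates, so the inequality reads $f(X)+f(Y)\le f(X-i)+f(Y+i)$, which is case (i), inequality \eqref{mnatcav1}. The membership conditions $X-i,Y+i$ or $X-i+j,Y+i-j\in\dom f$ follow because the corresponding $\tilde\cdot$ sets land in $\dom\tilde f$.

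For the ``only if'' direction (assume $f$ is \Mnat-concave, show $\tilde f$ is M-concave), I would verify $\McavS$ for $\tilde f$. Take $\tilde X,\tilde Y\in\dom\tilde f$ and $i\in\tilde X\setminus\tilde Y$, write $\tilde X=X\cup U$, $\tilde Y=Y\cup V$ as above with $|X|=|Y|$ forced (since $|\tilde X|=|\tilde Y|=r$ and $|U|,|V|$ adjust). If $i\in N$, apply $\MncavS$ to $f,X,Y,i$: in case (ii) the exchanged $j\in Y\setminus X$ is also in $\tilde Y\setminus\tilde X$ and works directly; in case (i) one has $f(X)+f(Y)\le f(X-i)+f(Y+i)$, and I would convert this into an M-exchange by picking a dummy $\tilde\jmath\in V\setminus U\subseteq \tilde Y\setminus\tilde X$ to absorb the cardinality shift, so that $\tilde f(\tilde X-i+\tilde\jmath)=f(X-i)$ and $\tilde f(\tilde Y+i-\tilde\jmath)=f(Y+i)$. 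If instead $i\in S$ is a dummy, then $i$ can be exchanged against any element of $\tilde Y\setminus\tilde X$ that keeps both sides in $\dom\tilde f$; here one uses that $\dom f$ has the g-matroid/\Mnat-convex structure (Section~\ref{SCmncavsetfnDef}) to guarantee a suitable $\tilde\jmath$ exists, and the function values are unchanged by dummy swaps. The main obstacle I anticipate is the bookkeeping in the case where the exchanged element crosses between $N$ and $S$ (a real element against a dummy, or vice versa): one must check both the domain-membership claims and that the chosen $U,V$ make $i$ lie in the symmetric difference, and this requires a careful choice of the dummy sets $U,V$ (for instance insisting $U$ and $V$ are disjoint, which is possible since $s\ge r-r'$ gives enough room). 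Once the correspondence $\tilde f(X\cup U)=f(X)$ and the decomposition of the exchange into the $N$-part and $S$-part are set up cleanly, each case reduces to a one-line translation.
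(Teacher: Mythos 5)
Your ``if'' direction (M-concavity of $\tilde f$ implies \Mnat-concavity of $f$) is sound and is essentially the paper's argument: the exchange element $\tilde\jmath$ produced by \McavS either lies in $Y\setminus X$, yielding case \eqref{mnatcav2}, or is a dummy in $S$, yielding case \eqref{mnatcav1}.

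The ``only if'' direction, however, has a genuine gap: you never treat the case $V\setminus U=\emptyset$, i.e.\ when $\tilde Y\setminus\tilde X$ contains no dummy element. (Your parenthetical claim that $|X|=|Y|$ is ``forced'' is false --- only $|X|+|U|=|Y|+|V|=r$ holds, and $V\setminus U=\emptyset$ occurs, e.g., whenever $|Y|=r$ so that $V=\emptyset$.) In that case your two conversion devices both break down. (a) For $i\in X\setminus Y$: if \MncavS yields alternative (i), $f(X)+f(Y)\le f(X-i)+f(Y+i)$, there is no dummy $\tilde\jmath\in V\setminus U$ available to absorb the cardinality shift, so you must instead show that an exchange with a genuine $j\in Y\setminus X$ works; since $V\subseteq U$ forces $|X|\le|Y|$, this is exactly the content of (P2$[\mathbb{B}]$) (when $|X|=|Y|$) and (P3$[\mathbb{B}]$) (when $|X|<|Y|$), which are nontrivial consequences of \Mnat-concavity established in Theorem~\ref{THmconcavcardexc01}, not facts available ``on the spot.'' (b) For a dummy $i\in U\setminus V$ (then $|U|>|V|$, hence $|X|<|Y|$): the required exchange must use a real $j\in Y\setminus X$ and reads $f(X)+f(Y)\le f(X+j)+f(Y-j)$; contrary to your assertion, such a dummy-against-real swap \emph{does} change the function values, and its validity is precisely property (P1$[\mathbb{B}]$) (Theorem~\ref{THmconcavcardexc01}(1)). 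The \Mnat-convex (g-matroid) structure of $\dom f$ that you invoke gives only domain membership, not these inequalities. The paper's proof closes exactly these two holes by appealing to (P1$[\mathbb{B}]$)--(P3$[\mathbb{B}]$); without them your case analysis is incomplete.
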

\begin{proof}
The exchange property \McavS for $\tilde{f}$ amounts to the following,
where $X, Y \in \dom f$ and  $U, V \subseteq S$ with
$|X|+|U|=|Y|+|V|=r$.
\begin{itemize}
\item
For any $i \in X \setminus Y$
there exists  
$j \in Y \setminus X$ with
\begin{align}
&  \tilde f( X \cup U) +\tilde  f( Y \cup V )  
\leq   \tilde f( (X - i + j )  \cup U  ) + \tilde f( (Y + i -j) \cup V  ),
\label{assocM11}
\end{align}
or there exists  
$j \in V \setminus U$ with 
\begin{align}
&  \tilde f( X \cup U) +\tilde  f( Y \cup V )  
\leq   \tilde f( (X - i)  \cup (U +j)  ) + \tilde f( (Y + i) \cup (V-j)  ).
\label{assocM12}
\end{align}

\item
For any $i \in U \setminus V$
there exists  
$j \in Y \setminus X$ with
\begin{align}
&  \tilde f( X \cup U) +\tilde  f( Y \cup V )  
\leq   \tilde f( (X + j )  \cup (U -i)  ) + \tilde f( (Y -j) \cup (V+i)  ),
\label{assocM21}
\end{align}
or there exists  
$j \in V \setminus U$ with
\begin{align}
&  \tilde f( X \cup U) +\tilde  f( Y \cup V )  
\leq   \tilde f( X  \cup (U -i +j)  ) + \tilde f( Y  \cup (V+i-j)  ).
\label{assocM22}
\end{align}
\end{itemize}
Suppose that $f$ is \Mnat-concave.
For any $i \in X \setminus Y$
we have (\ref{mnatcav1}) or (\ref{mnatcav2}).
In the case of (\ref{mnatcav2}) we obtain (\ref{assocM11}).
In the case of (\ref{mnatcav1}) we obtain (\ref{assocM12})
for any $j \in V \setminus U$, if $V \setminus U$ is nonempty.
If $V \setminus U$ is empty, then $|X| \leq |Y|$ and
we have (\ref{assocM11}) by 
(P2$[\mathbb{B}]$) and (P3$[\mathbb{B}]$).
Next, take any $i \in U \setminus V$ (when $U \setminus V \not= \emptyset$).
If $V \setminus U$ is nonempty, (\ref{assocM22}) holds 
for any $j \in V \setminus U$.
If $V \setminus U$ is empty, we have
$|U| > |V|$ and hence $|X| < |Y|$.
Then (P1$[\mathbb{B}]$) shows (\ref{assocM21}).
Thus we have shown the ``only-if'' part.
The converse (``if'' part) is also true, since
(\ref{mnatcav1}) follows from (\ref{assocM12}),
and (\ref{mnatcav2}) from (\ref{assocM11}).
\end{proof}

\subsection{Exchange properties characterizing M-concave functions}
\label{SCmexch01}


The local exchange property for M-concave set functions 
reads as follows.

\begin{description}
\item[\McavlocSb] 
For any $X, Y \subseteq N$ with $|X \setminus Y | = 2$, there 
exist $i \in X \setminus Y$ and $j \in Y \setminus X$ such that
\begin{equation}  \label{valmatexc1loc}
f( X) + f( Y ) \leq  f( X - i + j) + f( Y + i -j).
\end{equation}
\end{description}

\begin{theorem}\label{THmcavlocexc01}
A set function  $f: 2\sp{N} \to \Rminf$ is M-concave
if and only if 
$\dom f$ is a matroid basis family (an M-convex family) and 
$\McavlocS$
is satisfied.
\end{theorem}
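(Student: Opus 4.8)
The plan is to reduce everything to the theory of \Mnat-concave functions already developed, using the fact that a matroid basis family is nothing but an equi-cardinal \Mnat-convex family. Concretely, I would route the argument through Theorem~\ref{THmnatcavlocexc01} (local characterization of \Mnat-concavity) and Proposition~\ref{PRmcav=mnatcav+equicard} (M-concavity $=$ \Mnat-concavity $+$ equi-cardinal domain), so that the only genuinely new work is relating the M-type local condition $\McavlocS$ to the \Mnat-type local conditions (L1$[\mathbb{B}]$), (L2$[\mathbb{B}]$), (L3$[\mathbb{B}]$) of $\MncavlocS$.

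For the ``only if'' part, suppose $f$ is M-concave. The exchange property $\McavS$ immediately forces $\dom f$ to satisfy $\BvexS$, so $\dom f$ is a matroid basis family. Moreover, given any $X,Y\subseteq N$ with $|X\setminus Y|=2$, if either set lies outside $\dom f$ the required inequality is trivial, while if both lie in $\dom f$ then Proposition~\ref{PRmsetequicard01} gives $|X|=|Y|$, hence $Y\setminus X\neq\emptyset$, and choosing any $i\in X\setminus Y$ and invoking $\McavS$ produces $j\in Y\setminus X$ with $f(X)+f(Y)\le f(X-i+j)+f(Y+i-j)$; this is precisely $\McavlocS$. This direction requires no real effort.

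For the ``if'' part, suppose $\dom f$ is a matroid basis family and $\McavlocS$ holds. A basis family is equi-cardinal and satisfies $\BnvexS$ (case (i) of $\BnvexS$ can never hold because of the cardinality mismatch, and case (ii) is exactly $\BvexS$), so $\dom f$ is an \Mnat-convex family. I would then check $\MncavlocS$ and conclude \Mnat-concavity by Theorem~\ref{THmnatcavlocexc01}; since $\dom f$ is equi-cardinal, Proposition~\ref{PRmcav=mnatcav+equicard} upgrades this to M-concavity, finishing the proof.

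The heart of the matter, though it is short, is the verification of $\MncavlocS$. By equi-cardinality of $\dom f$, the sets $Z$ and $Z+i+j$ in (\ref{mnatconcavexc20loc}) have cardinalities differing by two and hence cannot both lie in $\dom f$, so the left-hand side of (L1$[\mathbb{B}]$) is $-\infty$ and the condition holds vacuously; the same cardinality gap between $Z+i+j$ and $Z+k$ makes the left-hand side of (\ref{mnatconcavexc21loc}) equal to $-\infty$, so (L2$[\mathbb{B}]$) is vacuous as well. Only (L3$[\mathbb{B}]$) carries content, since all six sets in (\ref{mnatconcavexc22loc}) have cardinality $|Z|+2$; here I set $X=Z+i+j$ and $Y=Z+k+l$, observe $X\setminus Y=\{i,j\}$ and $Y\setminus X=\{k,l\}$ so that $|X\setminus Y|=2$, and apply $\McavlocS$ to obtain $i'\in\{i,j\}$, $j'\in\{k,l\}$ with $f(X)+f(Y)\le f(X-i'+j')+f(Y+i'-j')$. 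The only point needing care is the (routine) enumeration of the four choices of $(i',j')$, each of which reproduces exactly one of the two admissible terms $f(Z+i+k)+f(Z+j+l)$ or $f(Z+j+k)+f(Z+i+l)$ on the right-hand side of (\ref{mnatconcavexc22loc}); this yields (L3$[\mathbb{B}]$) and completes the verification.
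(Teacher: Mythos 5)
Your proposal is correct and takes essentially the same route as the paper's own proof: identify M-concavity with \Mnat-concavity plus an equi-cardinal effective domain (Proposition~\ref{PRmcav=mnatcav+equicard}), invoke the local characterization of \Mnat-concavity (Theorem~\ref{THmnatcavlocexc01}), observe that (L1$[\mathbb{B}]$) and (L2$[\mathbb{B}]$) hold vacuously over an equi-cardinal domain, and identify (L3$[\mathbb{B}]$) with $\McavlocS$. You merely make explicit some details the paper leaves implicit, such as the check that a matroid basis family satisfies $\BnvexS$ and the four-case enumeration matching $\McavlocS$ with (L3$[\mathbb{B}]$).
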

\begin{proof}
By Proposition~\ref{PRmcav=mnatcav+equicard},
an M-concave function is precisely
an \Mnat-concave function with an equi-cardinal effective domain.
We use Theorem~\ref{THmnatcavlocexc01} that characterizes \Mnat-concavity 
by the local exchange property $\MncavlocS$.
If $\dom f$ is equi-cardinal,
the first two conditions 
(L1$[\mathbb{B}]$) and (L2$[\mathbb{B}]$) in $\MncavlocS$
are satisfied trivially,  since the left-hand sides 
of (\ref{mnatconcavexc20loc}) and (\ref{mnatconcavexc21loc})
are always equal to $-\infty$.
The third condition (L3$[\mathbb{B}]$) in $\MncavlocS$
is equivalent to $\McavlocS$.
\end{proof}

\begin{remark} \rm  \label{RMmlocdomcond}
In Theorem~\ref{THmcavlocexc01},
the assumption on $\dom f$ is indispensable.
For example, 
let $N=\{ 1,2,\ldots, 6 \}$
and define 
$f(\{ 1,2,3 \}) = f(\{ 4,5,6 \})=0$,
and $f(X)=-\infty$ for $X \not= \{ 1,2,3 \}, \{ 4,5,6 \}$.
This function $f$ is not M-concave, since 
$\dom f = \{ \{ 1,2,3 \}, \{ 4,5,6 \}  \}$ is not an M-convex family. 
However, $f$ satisfies the condition 
\McavlocS
in a trivial manner, since the left-hand side of 
\eqref{valmatexc1loc}
is equal to $-\infty$ whenever $|X \setminus Y | = 2$.
\finbox
\end{remark}

We consider another (seemingly) weaker exchange property:
\begin{description}
\item[\McavwSb] 
For any distinct $X, Y \subseteq N$, there exist $i \in X \setminus Y$
and $j \in Y \setminus X$ that satisfy \eqref{valmatexc1loc}.
\end{description}
If $f$ has this property, its effective domain $\mathcal{B} = \dom f$ satisfies
\begin{description}
\item[\BvexwSb] 
For any distinct $X, Y \in \mathcal{B}$, there exist $i \in X \setminus Y$
and $j \in Y \setminus X$ such that
$X - i +j \in \mathcal{B}$ and $ Y + i -j  \in \mathcal{B}$.
\end{description}

The seemingly weaker condition $\McavwS$ is, in fact, equivalent to $\McavS$.

\begin{theorem}\label{THmcavexcweak01}
A set function  $f: 2\sp{N} \to \Rminf$ is M-concave
if and only if \McavwS is satisfied.
\end{theorem}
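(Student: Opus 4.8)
The plan is to establish the nontrivial (``if'') direction by checking the hypotheses of Proposition~\ref{PRmnatcavlocexcW01}, which shows that $f$ is \Mnat-concave, and then to upgrade \Mnat-concavity to M-concavity through Proposition~\ref{PRmcav=mnatcav+equicard} once $\dom f$ is known to be equicardinal. The ``only-if'' direction is immediate: if $f$ is M-concave, then $\dom f$ is equicardinal by Proposition~\ref{PRmsetequicard01}, so any distinct $X,Y\in\dom f$ have both $X\setminus Y$ and $Y\setminus X$ nonempty, and choosing any $i\in X\setminus Y$ together with a maximizing $j$ in \McavS gives \eqref{valmatexc1loc}; pairs with $f(X)=-\infty$ or $f(Y)=-\infty$ are vacuous.

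First I would read off the combinatorial structure of $\mathcal{B}=\dom f$ forced by \McavwS. For $X,Y\in\dom f$ the quantity $f(X)+f(Y)$ is finite, so the inequality \eqref{valmatexc1loc} supplied by \McavwS forces $X-i+j,\,Y+i-j\in\dom f$; hence $\mathcal{B}$ satisfies \BvexwS. In particular $\mathcal{B}$ is an antichain, since a proper containment $X\subsetneq Y$ makes $X\setminus Y$ empty and leaves no admissible $i$ for the pair $(X,Y)$, contradicting \McavwS (whose left-hand side is finite here). I then claim $\mathcal{B}$ is equicardinal. If not, choose $X,Y\in\mathcal{B}$ with $|X|\neq|Y|$ minimizing $|X\bigtriangleup Y|$; by the antichain property both difference sets are nonempty, so \BvexwS provides $i\in X\setminus Y$ and $j\in Y\setminus X$ with $X'=X-i+j\in\mathcal{B}$. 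Then $|X'|=|X|\neq|Y|$ while $|X'\bigtriangleup Y|=|X\bigtriangleup Y|-2$, contradicting minimality.

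With equicardinality in hand I would verify the three connectedness conditions and \MncavlocS demanded by Proposition~\ref{PRmnatcavlocexcW01}. Conditions \eqref{Fconnected<1} and \eqref{Fconnected<2} are vacuous, since no pair with $|X|<|Y|$ exists in $\dom f$, whereas \eqref{Fconnected=} is a weakening of \BvexwS (retaining only the conclusion $Y+i-j\in\dom f$) and so holds. For \MncavlocS, conditions (L1$[\mathbb{B}]$) and (L2$[\mathbb{B}]$) hold trivially because the left-hand sides of \eqref{mnatconcavexc20loc} and \eqref{mnatconcavexc21loc} involve sets of two distinct cardinalities and hence equal $-\infty$ on an equicardinal domain; and (L3$[\mathbb{B}]$) is exactly \McavwS applied to $X=Z+i+j$, $Y=Z+k+l$, for which $X\setminus Y=\{i,j\}$ and $Y\setminus X=\{k,l\}$, since the exchange furnished there always reproduces one of the two terms inside the maximum in \eqref{mnatconcavexc22loc}. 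Proposition~\ref{PRmnatcavlocexcW01} then gives \Mnat-concavity, and Proposition~\ref{PRmcav=mnatcav+equicard} converts it into M-concavity.

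The main obstacle is the equicardinality step. The symmetric exchange in \BvexwS preserves $|X\bigtriangleup Y|$ when applied to the pair $(X,Y)$ itself, so the decrease surfaces only upon comparing the new set $X'$ with the \emph{old} partner $Y$, which in turn requires the antichain property so that the exchanged element $i$ can be taken from $X\setminus Y$. The deliberate decision to route through the weakened-domain Proposition~\ref{PRmnatcavlocexcW01} rather than the matroid-based Theorem~\ref{THmcavlocexc01} is precisely what spares me from having to promote the weak exchange \BvexwS all the way to the full basis-exchange axiom \BvexS, a strengthening that would be a genuinely harder matroid-theoretic statement.
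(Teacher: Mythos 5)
Your proof is correct, but it takes a genuinely different route from the paper's. The paper handles the ``if'' direction by observing that \McavwS forces $\dom f$ to satisfy \BvexwS, then invoking the known matroid-theoretic equivalence of \BvexwS and \BvexS (cited as Theorem 2.3.14 of \cite{Mspr2000=valmat}, not proved in this paper) to conclude that $\dom f$ is an M-convex family, after which Theorem~\ref{THmcavlocexc01} applies. You avoid that external equivalence entirely: you first prove equicardinality of $\dom f$ by an elementary argument (the antichain property plus the symmetric-difference minimality induction, which is correct --- the exchange $X'=X-i+j$ keeps $|X'|=|X|\neq|Y|$ while dropping $|X'\bigtriangleup Y|$ by $2$), and then route through Proposition~\ref{PRmnatcavlocexcW01}, whose hypotheses collapse nicely on an equicardinal domain: \eqref{Fconnected<1} and \eqref{Fconnected<2} are vacuous, \eqref{Fconnected=} is the surviving half of \BvexwS, (L1$[\mathbb{B}]$) and (L2$[\mathbb{B}]$) hold with $-\infty$ left-hand sides, and (L3$[\mathbb{B}]$) is precisely \McavwS for pairs with $|X\setminus Y|=2$ (all four possible exchanges land on one of the two terms in the maximum of \eqref{mnatconcavexc22loc}). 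Proposition~\ref{PRmcav=mnatcav+equicard} then finishes. What the paper's proof buys is brevity, at the cost of leaning on a nontrivial basis-exchange theorem from matroid theory; what yours buys is self-containedness within the paper's own proved results --- the full exchange property \BvexS for $\dom f$ emerges only a posteriori, as a consequence of the established M-concavity, rather than being needed as an input. This is exactly the kind of simplification Remark~\ref{RMmnatlocweakcond} hints at when it points to Proposition~\ref{PRmnatcavlocexcW01}.
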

\begin{proof}
The implication 
``$\McavS \Rightarrow \McavwS$''
is obvious.
To prove the converse, assume $\McavwS$ for $f$.
Then $\dom f$ has the exchange property $\BvexwS$.
It is known \cite[Theorem 2.3.14]{Mspr2000=valmat}
that $\BvexwS$ is equivalent to $\BvexS$.%
\footnote{
$\BvexwS$ and $\BvexS$ here correspond, respectively,  to 
(BM$_{\pm\rm w}$) and (BM$_{\pm}$)
in \cite{Mspr2000=valmat}.
}  
Then the claim follows from Theorem~\ref{THmcavlocexc01}.
\end{proof}

\begin{remark} \rm  \label{RMmlocweakcond}
Theorem~\ref{THmcavlocexc01} is due to 
Dress--Wenzel \cite{DWperf92=valmat} and Murota \cite{Mmax97=valmat},
and 
Theorem~\ref{THmcavexcweak01} is to Murota \cite{Mmax97=valmat}.
See also \cite[Theorem 5.2.25]{Mspr2000=valmat},
where the exchange properties \McavlocS and \McavwS
are called (VM$_{\rm loc}$) and (VM$_{\rm w}$), respectively. 
\finbox
\end{remark}


\section{Multiple Exchange Properties}
\label{SCexchange01mult}


\subsection{Theorems of multiple exchange properties}

As a generalization of the exchange property \MncavS 
for \Mnat-concave functions
we may conceive two versions of {\em multiple exchange property}:
\begin{description}
\item[\MncavmSb]
For any $X, Y \subseteq N$ and $I \subseteq X \setminus Y$,
there exists $J \subseteq Y \setminus X$ such that
\begin{align}
f( X) + f( Y )   \leq 
  f((X \setminus I) \cup J) +f((Y \setminus J) \cup I)   ,
\label{mnatconcavexcmult}
\end{align}
\item[\MncavmsSb]
For any $X, Y \subseteq N$ and $I \subseteq X \setminus Y$,
there exists $J \subseteq Y \setminus X$ with $|J| \leq |I|$ and (\ref{mnatconcavexcmult}),
\end{description}
where the latter,
requiring the cardinality condition $|J| \leq |I|$ on $J$,
is a stronger property than the former.
Thus the stronger form $\MncavmsS$ implies the weaker form $\MncavmS$.
The subscript ``m'' stands for ``multiple'' and ``s'' for ``stronger.''

Obviously, the (ordinary) exchange property \MncavS
follows from 
the stronger form \MncavmsS 
as its special case with $|I|=1$,
but it does not immediately follow from the weaker form $\MncavmS$.
These three conditions are, in fact, equivalent,
as is stated in the following theorem.

\begin{theorem} \label{THmultexchmnat}
For a function
$f: 2\sp{N} \to \Rminf$ 
with $\dom f \not= \emptyset$,
the three conditions  $\MncavS$, $\MncavmS$, and $\MncavmsS$
are pairwise equivalent.
Therefore, every M$\sp{\natural}$-concave set function
has the stronger multiple exchange property $\MncavmsS$.
\end{theorem}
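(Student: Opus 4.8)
The two implications emanating from $\MncavmsS$ are immediate: dropping the cardinality requirement $|J|\le|I|$ gives $\MncavmS$, while specializing to $|I|=1$ forces $|J|\le 1$, i.e.\ $J=\emptyset$ or $J=\{j\}$, which are exactly the two alternatives of $\MncavS$. Hence the entire content is to establish the two nontrivial arrows $\MncavS\Rightarrow\MncavmsS$ and $\MncavmS\Rightarrow\MncavS$; together with the trivial ones they close the cycle $\MncavS\Rightarrow\MncavmsS\Rightarrow\MncavmS\Rightarrow\MncavS$ and yield pairwise equivalence.

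For $\MncavS\Rightarrow\MncavmsS$ the plan is to pass to the equi-cardinal lift $\tilde f$ of Section~\ref{SCrelmmncavsetfn}. By Proposition~\ref{PRmnatequicardvalmat}, $f$ is \Mnat-concave if and only if $\tilde f$ is M-concave, so it suffices to prove that an M-concave (valuated-matroid) function enjoys the multiple exchange property and then translate back. The translation is the key bookkeeping step: given $X,Y\in\dom f$ and $I\subseteq X\setminus Y$, choose $U,V\subseteq S$ with $|X|+|U|=|Y|+|V|=r$ so that $\tilde X=X\cup U$ and $\tilde Y=Y\cup V$ lie in $\dom\tilde f$, and note $I\subseteq\tilde X\setminus\tilde Y$. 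A multiple exchange for $\tilde f$ returns $\tilde J\subseteq\tilde Y\setminus\tilde X=(Y\setminus X)\cup(V\setminus U)$ with $|\tilde J|=|I|$, since equi-cardinality of $\dom\tilde f$ forces this equality, so that for $\tilde f$ the weak and strong forms coincide (cf.\ Proposition~\ref{PRmcav=mnatcav+equicard}). Splitting $\tilde J=J\cup W$ with $J\subseteq Y\setminus X$ and $W\subseteq S$, a direct cardinality count shows that both transported sets again have size $r$, so the $\tilde f$-inequality collapses to $f(X)+f(Y)\le f((X\setminus I)\cup J)+f((Y\setminus J)\cup I)$ with $|J|=|I|-|W|\le|I|$. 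Thus the slack absorbed by the auxiliary elements $S$ is precisely what supplies the cardinality bound of $\MncavmsS$.

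It then remains to prove multiple exchange for a valuated matroid, which I would do by induction on $|I|$, the base $|I|=1$ being $\McavS$. Peeling off one element $i\in I$ via $\McavS$ gives $j\in Y\setminus X$ and the pair $X_1=X-i+j$, $Y_1=Y+i-j$ with $I-i\subseteq X_1\setminus Y_1$, to which the induction hypothesis applies. The technical hurdle here is that $i$ now lies in $Y_1\setminus X_1$, so the inductively produced exchange set could reselect $i$ and undo the first step; this is exactly the difficulty the perturbation arguments of Section~\ref{SCproofmnatexccardloc} are designed to neutralize. I would therefore argue on a minimal counterexample and introduce a potential $p\in\RR^{\tilde N}$, as in Lemmas~\ref{LMmnatexcdiffcard01} and \ref{LMmnatexcequicard01}, normalized so that every relevant single exchange is tight and reselecting $i$ is strictly suboptimal, forcing $i$ to stay out and yielding a genuine size-$|I|$ partner.

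Finally, to close the cycle I would prove $\MncavmS\Rightarrow\MncavS$, and I expect this weak-to-single step to be the main obstacle, because the cardinality bound simply cannot be extracted from the weak form by elementary set manipulation and genuinely relies on the valuated-matroid structure. The approach is: given $X,Y\in\dom f$ and $i\in X\setminus Y$, apply $\MncavmS$ with $I=\{i\}$ and take a witness $J\subseteq Y\setminus X$ of minimum cardinality, the goal being $|J|\le1$. If $|J|\ge2$, I would apply $\MncavmS$ to the auxiliary pair $A=(X-i)\cup J$, $B=(Y\setminus J)\cup\{i\}$, for which $f(X)+f(Y)\le f(A)+f(B)$, so as to manufacture a strictly smaller witness and contradict minimality. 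The delicate point is that the distinguished element $i$ sits in $B\setminus A$ and must be prevented from re-entering the $X$-side during the reduction; I would again control the selection by a suitable potential and, where that does not suffice, route the argument through the equi-cardinal lift, letting the coincidence of weak and strong exchange for $\tilde f$ do the work.
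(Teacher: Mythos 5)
Your identification of the trivial arrows and your bookkeeping for the strong form are both sound: passing to the equi-cardinal lift $\tilde f$, noting that equi-cardinality of $\dom \tilde f$ forces $|\tilde J|=|I|$, and splitting $\tilde J=J\cup W$ so that $|J|\leq|I|$ is exactly the paper's Lemma~\ref{LMmnatexc0ms}. The genuine gap is in the step that carries all the mathematical weight: deriving a multiple exchange property from the single exchange property (in your plan, $\McavmS$ for the M-concave $\tilde f$; in the paper, $\MncavS\Rightarrow\MncavmS$, Lemma~\ref{LMmnatexcm0}). Your induction on $|I|$ with a potential chosen so that ``reselecting $i$ is strictly suboptimal'' cannot be completed as described, for a concrete reason: every exchange inequality of the symmetric form \eqref{mnatconcavexcmult} (and likewise \eqref{valmatexc1}) is invariant under replacing $f$ by $f[+p]$, since $p((X\setminus I)\cup J)+p((Y\setminus J)\cup I)=p(X)+p(Y)$ for any witness $J$. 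Hence the set of valid witnesses $J_1$ for the intermediate pair $(X_1,Y_1)=(X-i+j,\,Y+i-j)$ is unchanged by any potential, and nothing rules out that \emph{every} such witness contains $i$, in which case the composed sets are no longer of the form $(X\setminus I)\cup J$, $(Y\setminus J)\cup I$. The perturbations of Section~\ref{SCproofmnatexccardloc} do real work only because they re-rank candidates in inequalities that are not of this symmetric form; they cannot rescue the induction, and a selection rule that always finds an $i$-avoiding witness would already amount to the theorem itself. The paper's proof of Lemma~\ref{LMmnatexcm0} is not inductive at all: it fixes $X$, $Y$, $I$, forms the two \Mnat-concave functions $f_1(J)=f((X\setminus I)\cup J)$ and $f_2(J)=f((Y\setminus J)\cup I)$ on $Y\setminus X$, and bounds $\max_J\{f_1(J)+f_2(J)\}$ below by $f(X)+f(Y)$ via the Fenchel-type duality theorem combined with submodularity of the conjugate $g$ of $f$ (Lemma~\ref{LMg1qg2q}, using the vectors $p^{(1)},p^{(2)}$ and $g(p^{(1)})+g(p^{(2)})\geq g(p^{(1)}\vee p^{(2)})+g(p^{(1)}\wedge p^{(2)})$). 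This duality/submodularity mechanism is the essential idea missing from your proposal, and it is precisely why the multiple exchange property for valuated matroids was a nontrivial open question rather than an exercise in induction.

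The direction $\MncavmS\Rightarrow\MncavS$ in your write-up suffers from the same unresolved re-entry problem, plus a circularity: the fallback of ``routing through the equi-cardinal lift'' is unavailable, because Proposition~\ref{PRmnatequicardvalmat} translates between \Mnat-concavity of $f$ and M-concavity of $\tilde f$, neither of which you know at that point --- they are what you are trying to prove. The paper's route (Lemma~\ref{LMmnatexc0m}) avoids witness-size reduction entirely: from $\MncavmS$ one checks that $\dom f$ satisfies $\BnvexS$, verifies (L1$[\mathbb{B}]$) and (L2$[\mathbb{B}]$) directly (apply $\MncavmS$ with $I$ a singleton; there the set $Y\setminus X$ has at most one element, so any witness is automatically admissible), establishes (L3$[\mathbb{B}]$) by the case analysis used for Theorem~\ref{THmnatcavlocexc01hered}, and then invokes the local characterization Theorem~\ref{THmnatcavlocexc01}. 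That argument would repair this direction of your cycle; the duality step behind $\MncavS\Rightarrow\MncavmS$, however, has no elementary substitute in your sketch.
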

\begin{proof}
The proof is given in Section~\ref{SCproofmnatmult01}.
It is based on the Fenchel-type duality theorem
\cite[Theorem~8.21]{Mdcasiam=valmat}.
\end{proof}

For M-concave functions, the multiple exchange property 
takes the following form,
since the effective domain is equi-cardinal.

\begin{description}
\item[\McavmSb]
For any $X, Y \subseteq N$ and $I \subseteq X \setminus Y$,
there exists $J \subseteq Y \setminus X$ with $|J| = |I|$ and (\ref{mnatconcavexcmult}).
\end{description}

\begin{theorem} \label{THmultexchvalmat}
Every M-concave function (valuated matroid) 
has the multiple exchange property $\McavmS$.
\end{theorem}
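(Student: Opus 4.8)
The plan is to deduce the statement directly from the M$\sp{\natural}$-concave multiple exchange property, using the fact that the effective domain of an M-concave function is equi-cardinal. By Proposition~\ref{PRmcav=mnatcav+equicard}, an M-concave $f$ is exactly an M$\sp{\natural}$-concave function whose effective domain consists of equi-cardinal sets, so Theorem~\ref{THmultexchmnat} already supplies the stronger property $\MncavmsS$: for every $X, Y \subseteq N$ and $I \subseteq X \setminus Y$ there is a $J \subseteq Y \setminus X$ with $|J| \le |I|$ satisfying (\ref{mnatconcavexcmult}). The entire task is therefore to promote the inequality $|J| \le |I|$ to the equality $|J| = |I|$ demanded by $\McavmS$.

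First I would reduce to the case $X, Y \in \dom f$: if either set lies outside $\dom f$, then $f(X)+f(Y) = -\infty$ and (\ref{mnatconcavexcmult}) holds trivially, so nothing substantive is claimed. When $X, Y \in \dom f$, Proposition~\ref{PRmsetequicard01} gives $|X| = |Y|$, hence $|X \setminus Y| = |Y \setminus X|$ and in particular $|I| \le |Y \setminus X|$, which guarantees that subsets $J \subseteq Y \setminus X$ of cardinality $|I|$ even exist.

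The crux is a short cardinality count. Take the set $J$ furnished by $\MncavmsS$. Since $X, Y \in \dom f$, the left-hand side of (\ref{mnatconcavexcmult}) is finite, so its right-hand side cannot be $-\infty$; this forces both $(X \setminus I) \cup J \in \dom f$ and $(Y \setminus J) \cup I \in \dom f$. Now $I \subseteq X \setminus Y \subseteq X$ while $J \subseteq Y \setminus X$ is disjoint from $X$, so $|(X \setminus I) \cup J| = |X| - |I| + |J|$. Because $\dom f$ is equi-cardinal, this quantity equals $|X|$, and hence $|J| = |I|$. This is precisely $\McavmS$.

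I do not expect a serious obstacle, as the heavy lifting is carried out by Theorem~\ref{THmultexchmnat}; the only points needing care are the observation that finiteness of the left-hand side forces membership $(X \setminus I) \cup J \in \dom f$ (so that equi-cardinality may be invoked), and the bookkeeping that turns $|J| \le |I|$ into $|J| = |I|$.
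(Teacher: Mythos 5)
Your proof is correct and takes essentially the same route as the paper's own proof (Lemma~\ref{LMmexc0m}): reduce to an \Mnat-concave function with equi-cardinal effective domain via Proposition~\ref{PRmcav=mnatcav+equicard}, invoke the multiple exchange property supplied by Theorem~\ref{THmultexchmnat}, and let equi-cardinality of $\dom f$ force $|J| = |I|$ --- your cardinality count is exactly what the paper compresses into the phrase ``$\MncavmS$ for $f$ is equivalent to $\McavmS$.'' The only cosmetic difference is that you cite the strong version $\MncavmsS$ where the paper cites the weak implication ``$\MncavS \Rightarrow \MncavmS$''; the paper's choice is the cleaner one (within its proof of Theorem~\ref{THmultexchmnat} the strong version is itself derived from the M-concave case), and indeed your own count shows the bound $|J| \leq |I|$ is never needed.
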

\begin{proof}
This follows from the implication
``$\MncavS \Rightarrow \MncavmS$''
in Theorem~\ref{THmultexchmnat}. 
using Proposition~\ref{PRmcav=mnatcav+equicard}.
See the proof of Lemma \ref{LMmexc0m} in Section~\ref{SCproofmnatmult01} for detail.
\end{proof}

\begin{remark} \rm  \label{RMmultexchbib}
Theorem~\ref{THmultexchvalmat} 
and the equivalence of 
$\MncavS$ and $\MncavmS$
in Theorem~\ref{THmultexchmnat}
are due to Murota \cite{Mmultexc18=valmat},
whereas the equivalence of 
$\MncavS$ and to the stronger version $\MncavmsS$
is established in Murota \cite{Mmultexcstr18=valmat}.
\finbox
\end{remark}

\begin{remark} \rm  \label{RMmultBexc}
Theorem~\ref{THmultexchvalmat} implies a classical result in matroid theory
(cf., Kung \cite{Kun86b=valmat}, Schrijver \cite[Section 39.9a]{Sch03=valmat})
that the base family $\mathcal{B}$ of a matroid has the multiple exchange property:
\begin{description}
\item[\BvexmSb] 
For any $X, Y \in \mathcal{B}$ and $I \subseteq X \setminus Y$,
there exists $J \subseteq Y \setminus X$ 
with $|J| = |I|$ such that
$(X \setminus I) \cup J \in \mathcal{B}$
and $(Y \setminus J) \cup I \in \mathcal{B}$.
\end{description}
It follows from Theorem~\ref{THmultexchmnat} 
that a nonempty family $\mathcal{F} \subseteq 2\sp{N}$
satisfies \BnvexS
if and only if it satisfies
the multiple exchange property:
\begin{description}
\item[\BnvexmSb] 
For any $X, Y \in \mathcal{F}$ and $I \subseteq X \setminus Y$,
there exists $J \subseteq Y \setminus X$ such that
$(X \setminus I) \cup J \in \mathcal{F}$
and $(Y \setminus J) \cup I \in \mathcal{F}$
\end{description}
as well as its stronger form with an additional condition $|J| \leq |I|$ on $J$.
Therefore, every g-matroid has this multiple exchange property.
\finbox
\end{remark}

\begin{remark} \rm  \label{RMmulteco}
The multiple exchange property $\MncavmS$ here
is the same as the ``strong no complementarities property (SNC)''
introduced by Gul--Stacchetti \cite{GS99=valmat},
where it is shown that (SNC) implies the gross substitutes property (GS)
of Kelso--Crawford \cite{KC82=valmat}.
By a result of Fujishige--Yang \cite{FY03gs=valmat},
on the other hand,
(GS) is equivalent to $\MncavS$.
Therefore, Theorem~\ref{THmultexchmnat} above 
reveals that (SNC) is equivalent to (GS).
\finbox
\end{remark}

\subsection{Proof of Theorem~\ref{THmultexchmnat}}
\label{SCproofmnatmult01}

We prove Theorem~\ref{THmultexchmnat}
about multiple exchange properties.
Our proof first shows 
the equivalence of $\MncavS$ and $\MncavmS$
in Lemmas \ref{LMmnatexc0m} and \ref{LMmnatexcm0}.
Using this we further show the implication
``$\MncavS \Rightarrow \MncavmsS$''
in Lemma \ref{LMmnatexc0ms}.
The converse 
``$\MncavS \Leftarrow \MncavmsS$''
is obvious, 
as is already mentioned in before Theorem \ref{THmultexchmnat}.

\begin{lemmaM}  \label{LMmnatexc0m}
$\MncavmS$ implies $\MncavS$.
\end{lemmaM}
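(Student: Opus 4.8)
The plan is to recast $\MncavS$ as a statement about where a discrete convolution attains its maximum, and then to rule out ``large'' optimal exchanges by the perturbation / minimal-counterexample technique already used in Lemmas~\ref{LMmnatexcdiffcard01}, \ref{LMmnatexcequicard01}, and \ref{LMp12toP4}. Fix $X, Y$ and $i \in X \setminus Y$ and set $g(J) = f((X-i)\cup J) + f((Y \setminus J)\cup i)$ for $J \subseteq Y \setminus X$. Then $\MncavmS$, applied with $I = \{i\}$, asserts exactly that $\max_{J} g(J) \geq f(X)+f(Y)$, whereas $\MncavS$ is the sharper statement that this bound is already attained by some $J$ with $|J| \leq 1$: the value $g(\emptyset)$ is the first alternative in \eqref{mnatconcavexc2} and $g(\{j\})$ the second. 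So everything reduces to showing that the maximum of $g$ is realized by an exchange set of cardinality at most one.

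First I would record two facts that come for free. The local properties (L1$[\mathbb{B}]$) and (L2$[\mathbb{B}]$) are immediate consequences of $\MncavmS$: applying it with $I = \{i\}$ to the pair $(Z+i+j,\,Z)$ (respectively $(Z+i+j,\,Z+k)$) forces the returned $J$ to lie in a set $Y \setminus X$ of size at most one, which yields \eqref{mnatconcavexc20loc} (respectively \eqref{mnatconcavexc21loc}) directly. I would also note that adding a linear term preserves $\MncavmS$: for any $p \in \mathbb{R}^{N}$ both sides of \eqref{mnatconcavexcmult} shift by the same constant $\sum_{i \in X} p_{i} + \sum_{i \in Y} p_{i}$, so $f_{p}$ again satisfies $\MncavmS$.

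To prove the reduced statement I would argue by contradiction, mirroring Lemma~\ref{LMmnatexcequicard01}. Assume $\MncavS$ fails, collect the offending triples into a family $\mathcal{D}$, and take $(X,Y) \in \mathcal{D}$ with $|X \bigtriangleup Y|$ minimal, fixing a witness $i_{*} \in X \setminus Y$. For a small $\varepsilon > 0$ I would introduce the same potential $p$ as in the proof of Lemma~\ref{LMp12toP4}, chosen so that $f_{p}(X - i_{*}) = f_{p}(X - i_{*} + j) = f_{p}(X)$ while $f_{p}(Y + i_{*}) < f_{p}(Y)$ and $f_{p}(Y + i_{*} - j) < f_{p}(Y)$ for every $j \in Y \setminus X$; this is the analogue of Claim~1 there and makes every single-element exchange out of $(X,Y)$ strictly lose value. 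Applying $\MncavmS$ to $f_{p}$ with $I = \{i_{*}\}$ produces a set $J$ with the corresponding quantity formed from $f_{p}$ at least $f_{p}(X) + f_{p}(Y)$, and the strict single-exchange inequalities force $|J| \geq 2$.

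The heart of the argument, and the step I expect to be the main obstacle, is then to convert this large optimal exchange into a genuinely closer counterexample, exactly as Claims~2--4 of Lemma~\ref{LMmnatexcequicard01} do. Using the local exchange consequences of $\MncavmS$ together with the strict inequalities above, I would select an appropriate pair $i_{0} \in X \setminus Y$ and $j_{0} \in Y \setminus X$ (or a null move) maximizing $f_{p}(Y + i_{0} - j_{0})$, show that a one-element modification $Y'$ of $Y$ (either $Y - j_{0}$ or $Y + i_{0} - j_{0}$, according to the cardinality gap, as in Cases~1 and 2 of Lemma~\ref{LMmnatexcdiffcard01}) keeps $(X,Y') \in \mathcal{D}$ with distinguished element still $i_{*}$, and verify $|X \bigtriangleup Y'| < |X \bigtriangleup Y|$, contradicting minimality. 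The delicate point is guaranteeing that $Y'$ lies in $\dom f$ and that all of the perturbed inequalities survive the move; this is precisely where the local exchange axioms, and the bookkeeping that tracks whether $i_{*}$ is driven back onto the $X$-side, must be used with care, and where the proof genuinely needs the full strength of $\MncavmS$ rather than merely its single-exchange shadow.
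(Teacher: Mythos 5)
Your reformulation of the problem is correct, and your two preliminary observations are sound: (L1$[\mathbb{B}]$) and (L2$[\mathbb{B}]$) do follow at once from $\MncavmS$ (in those instances the relevant set $Y \setminus X$ has at most one element, so the returned $J$ is automatically of size at most one), and $\MncavmS$ is preserved under adding a linear function, since $(X \setminus I) \cup J$ and $(Y \setminus J) \cup I$ together contain each element exactly as often as $X$ and $Y$ do. But from that point on your text is a plan rather than a proof, and the plan has a genuine hole at exactly the place you flag yourself. The minimal-counterexample machinery you want to imitate (Lemmas~\ref{LMmnatexcdiffcard01}, \ref{LMmnatexcequicard01}, \ref{LMp12toP4}) does not run on (L1$[\mathbb{B}]$), (L2$[\mathbb{B}]$) and a perturbation alone: the ``Claim~2'' steps of those proofs, which produce elements $i_{0},j_{0}$ with $Y - j_{0} \in \dom f$ or $Y + i_{0} - j_{0} \in \dom f$, rest on the domain conditions \eqref{Fconnected<1}, \eqref{Fconnected=}, \eqref{Fconnected<2} (equivalently, on $\dom f$ satisfying $\BnvexS$), and the ``Claim~3/4'' steps, which show that the modified pair is again a counterexample, use (L3$[\mathbb{B}]$) (or (P1$[\mathbb{B}]$), (P2$[\mathbb{B}]$)). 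You have derived neither the domain structure nor (L3$[\mathbb{B}]$) from $\MncavmS$; you only observe that a large exchange set $J$ exists and then say you ``would'' choose $i_{0},j_{0}$ and ``verify'' that the counterexample moves closer, explicitly conceding that the delicate points are unresolved. Those delicate points are the entire mathematical content of the lemma, and they are missing; also, your large set $J$ plays no actual role in the construction of $Y'$ you describe, so it is not even clear how the full strength of $\MncavmS$ would enter your induction.

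For comparison, the paper closes precisely these two holes and then stops: it first shows that $\dom f$ satisfies $\BnvexS$, so that $\dom f$ is an \Mnat-convex family (details in \cite{Mmultexc18=valmat}); it then derives (L1$[\mathbb{B}]$) and (L2$[\mathbb{B}]$) exactly as you do, and derives (L3$[\mathbb{B}]$) by a case analysis in the spirit of the proof of Theorem~\ref{THmnatcavlocexc01hered} (again with details in \cite{Mmultexc18=valmat}); finally it invokes the local characterization, Theorem~\ref{THmnatcavlocexc01}, to conclude \Mnat-concavity. This route lets the already-proven Theorem~\ref{THmnatcavlocexc01} absorb all of the perturbation/induction work that you were attempting to redo by hand. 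If you wish to salvage your approach, the two concrete items you must supply are: (i) a proof from $\MncavmS$ that $\dom f$ satisfies $\BnvexS$, or at least the connectivity properties \eqref{Fconnected<1}, \eqref{Fconnected=}, \eqref{Fconnected<2}; and (ii) a proof of (L3$[\mathbb{B}]$) from $\MncavmS$. With these in hand you could quote Proposition~\ref{PRmnatcavlocexcW01} (or Theorem~\ref{THmnatcavlocexc01}) and dispense with your induction entirely.
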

\begin{proof}
First, it can be shown that $\dom f$ satisfies $\BnvexS$;
see \cite[Section 5.1]{Mmultexc18=valmat} for the detail.
Then the proof is reduced,
by Theorem~\ref{THmnatcavlocexc01},
 to showing the local exchange property $\MncavlocS$
in Section~\ref{SCexchange01loc}.
The first two conditions
(L1$[\mathbb{B}]$) and (L2$[\mathbb{B}]$)
of $\MncavlocS$
are immediate from $\MncavmS$,
and third condition (L3$[\mathbb{B}]$)
can be shown similarly 
to the proof of Theorem~\ref{THmnatcavlocexc01hered};
see \cite[Section 5.2]{Mmultexc18=valmat} for the detail.
\end{proof}

\begin{lemmaM}  \label{LMmnatexcm0}
$\MncavS$ implies $\MncavmS$.
\end{lemmaM}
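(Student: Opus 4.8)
The plan is to recast the requested block exchange as the maximization of a sum of two M$^{\natural}$-concave functions over a box, and then to invoke the Fenchel-type duality theorem \cite[Theorem~8.21]{Mdcasiam=valmat}. First I would make two harmless reductions. We may assume $X,Y\in\dom f$ and $I\neq\emptyset$, since otherwise (\ref{mnatconcavexcmult}) holds trivially with $J=\emptyset$. Next, replacing $f$ by the restriction $\varphi(Z)=f((X\cap Y)\cup Z)$ on $2^{X\bigtriangleup Y}$ — which is again M$^{\natural}$-concave and carries $X,Y,I$ to $X\setminus Y,\ Y\setminus X,\ I$ — I may assume $X\cap Y=\emptyset$, and by restricting further to the ground set $X\cup Y$ (fixing the remaining coordinates to $0$) I may assume $N=X\sqcup Y$. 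Writing $S=(X\setminus I)\cup J$ for the first set produced by the exchange, the companion set is exactly $N\setminus S=(Y\setminus J)\cup I$, and as $J$ runs over subsets of $Y$ the set $S$ runs over the box $B=\{S:\ X\setminus I\subseteq S\subseteq N\setminus I\}$. Thus $\MncavmS$ at $(X,Y,I)$ is equivalent to
\[
\max_{S\in B}\bigl[\,f(S)+f(N\setminus S)\,\bigr]\ \ge\ f(X)+f(Y),
\]
and the desired $J$ is recovered as $J=S^{*}\cap Y$ from any good $S^{*}\in B$.

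Second, a direct check shows that $\bar f(S):=f(N\setminus S)$ is itself M$^{\natural}$-concave: rewriting the exchange for $\bar f$ at a pair $(A,B)$ via complements turns it, term by term, into the exchange $\MncavS$ for $f$ at the complementary pair $(N\setminus B,\,N\setminus A)$ with the same distinguished element, so complementation preserves the property. Hence, letting $f_{B}$ denote the restriction of $f$ to the box $B$ (still M$^{\natural}$-concave), the quantity to be bounded is the maximum of the sum $f_{B}+\bar f$ of two M$^{\natural}$-concave functions, equivalently $\max_{S}\{f_{B}(S)-g(S)\}$ for the M$^{\natural}$-convex $g=-\bar f$. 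The Fenchel-type duality theorem then yields
\[
\max_{S}\bigl[\,f_{B}(S)+\bar f(S)\,\bigr]=\min_{p\in\RR^{N}}\bigl[\,\overline{f_{B}}(p)+\overline{\bar f}(-p)\,\bigr],
\]
with the minimum attained at some potential $p^{*}$, where $\overline{h}(q)=\max_{S}\{h(S)-\sum_{i\in S}q_{i}\}$ is the concave conjugate. At this $p^{*}$ the overall maximizers sit in the common refinement of the two individual maximizer families $\arg\max f_{-p^{*}}$ (taken within $B$) and $\arg\max \bar f_{p^{*}}$, each of which is a g-matroid, i.e.\ satisfies $\BnvexS$.

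The heart of the argument — and the step I expect to be the main obstacle — is to show that this \emph{constrained} optimum is still at least $f(X)+f(Y)$, that is, that forcing the whole block $I$ to cross to the second set costs nothing. Feeding naive primal points into the dual objective only reproduces the trivial choice $J=\emptyset$, namely the bound $f(X\setminus I)+f(Y\cup I)$, because the original pair $X$ lies outside $B$; closing this gap is exactly where the single exchange property must be used in tandem with the certificate $p^{*}$. Concretely, note that for each $i\in I\subseteq X\setminus Y$ the ordinary exchange $\MncavS$ applied to $(X,Y)$ produces, without decreasing $f(X)+f(Y)$, either the move $X-i,\ Y+i$ or a swap $X-i+j,\ Y+i-j$ with $j\in Y\setminus X$; in both cases $i$ leaves the first set and enters the second, while the remaining elements of $I$ stay in $(X'\setminus Y')$. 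Iterating this drives all of $I$ across and accumulates the traded elements into $J$. The only danger is that a later step reinserts an already-crossed element of $I$; the role of the Fenchel dual potential $p^{*}$ is precisely to suppress this, since relative to $f_{p^{*}}$ such a reversal is never beneficial, so complementary slackness guarantees that the sequence of single exchanges terminates at a point of $B$ whose value has not dropped below $f(X)+f(Y)$. I expect translating the dual certificate into this non-self-undoing sequence of single exchanges to be where all the real work lies, the duality theorem serving to guarantee that the box-constrained optimization cannot get stuck strictly above the target configuration.
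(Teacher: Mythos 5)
Your setup is sound and, up to reformulation, identical to the paper's: after your reductions, maximizing $f(S)+f(N\setminus S)$ over the box $B$ is exactly the paper's problem of maximizing $f_{1}(J)+f_{2}(J)$ over $J\subseteq Y\setminus X$, where $f_{1}(J)=f((X\setminus I)\cup J)$ and $f_{2}(J)=f((Y\setminus J)\cup I)$ (both M$\sp{\natural}$-concave; your observation that complementation preserves M$\sp{\natural}$-concavity is correct), and both you and the paper then invoke the Fenchel-type duality theorem \cite[Theorem~8.21]{Mdcasiam=valmat}. But at the decisive point — showing that the dual value is at least $f(X)+f(Y)$ — your proposal has a genuine gap, which you yourself flag. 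Your substitute argument (drive the elements of $I$ across one at a time by single exchanges, and claim that an optimal dual potential $p^{*}$ together with ``complementary slackness'' prevents a later swap from re-inserting an already-crossed element of $I$) is not a proof and is unlikely to become one: nothing in the optimality of $p^{*}$ forbids the exchange element $j$ supplied by $\MncavS$ at a later step from lying in $I$, and controlling exactly this back-and-forth is the classical obstruction that makes the multiple exchange property hard in the first place. You have, in effect, reduced the lemma to the same difficulty you started with.

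The paper closes this gap by a completely different and non-iterative argument (its Lemma~\ref{LMg1qg2q}): it proves the bound $g_{1}(q)+g_{2}(-q)\geq f(X)+f(Y)$ \emph{pointwise for every} dual vector $q$, so no optimal $p^{*}$ (and no attainment of the infimum, which you assumed without justification) is needed. Concretely, $g_{1}(q)$ and $g_{2}(-q)$ are rewritten as values $g(p^{(1)})$, $g(p^{(2)})$ of the conjugate $g$ of the \emph{original} function $f$ at two vectors carrying large penalties $\pm M$ that encode the constraints ``contain $X\setminus I$, avoid $I$'' and ``contain $I$, avoid $X\setminus I$'' respectively; then submodularity of $g$ (the conjugate of an M$\sp{\natural}$-concave function is submodular, \cite[Theorem~6.19]{Mdcasiam=valmat}) gives
\begin{equation*}
g(p^{(1)})+g(p^{(2)})\ \geq\ g(p^{(1)}\vee p^{(2)})+g(p^{(1)}\wedge p^{(2)}),
\end{equation*}
and the join and meet have constant sign $\pm M$ on all of $X\setminus Y$, so evaluating the conjugate at $Z=Y$ and $Z=X$ bounds the right-hand side below by $f(X)+f(Y)$ after the $M$-terms cancel. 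This lattice-theoretic recombination of the two penalty patterns is the idea your proposal is missing; without it (or some genuinely different certificate), your argument does not go through.
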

\begin{proof}
Let $f: 2^{N} \to \Rminf$ be 
an M$^{\natural}$-concave function,
which, by definition,  satisfies the exchange property $\MncavS$.
Let  $X, Y \in \dom f$ and $I \subseteq X \setminus Y$.
With the notations:
\begin{align}
 &C = X \cap Y,
\qquad
 X_{0} = X \setminus Y = X \setminus C,
\qquad
 Y_{0} = Y \setminus X = Y \setminus C ,
\label{mexcCX0Y0def}
\\
& f_{1}(J) =  f((X \setminus I) \cup J) 
  = f( (X_{0} \setminus I) \cup C \cup J)
\qquad (J \subseteq Y_{0}),
\label{mexcf1def}
\\
& f_{2}(J) = f((Y \setminus J) \cup I)
  = f(  I \cup C \cup (Y_{0} \setminus J) )
\qquad (J \subseteq Y_{0}),
\label{mexcf2def}
\end{align}
the multiple exchange property \MncavmS
is rewritten as 
\begin{align}
f( X) + f( Y )   \leq 
 \max_{J \subseteq Y_{0}}  \{ f_{1}(J) + f_{2}(J) \}.
\label{mnatconcavexcmult3}
\end{align}
Both  $f_{1}$ and $f_{2}$
are M$^{\natural}$-concave set functions on $Y_{0}$,
where the nonemptiness of $\dom f_{1}$ and $\dom f_{2}$
can be shown by induction on $|I|$ using $\BnvexS$.

For $i=1,2$, let $g_{i}$ be the (convex) conjugate function of $f_{i}$ defined as
\begin{align*}
g_{1}(q) &=  
 \max_{J \subseteq Y_{0}} \{  f_{1}(J) - q(J) \} 
\qquad (q \in \RR^{Y_{0}}),
\\
g_{2}(q) &= 
 \max_{J \subseteq Y_{0}} \{  f_{2}(J) - q(J) \} 
\qquad (q \in \RR^{Y_{0}}),
\end{align*}
where $q(J) = \sum_{j \in J} q_{j}$.
The Fenchel-type duality
\cite[Theorem~8.21(1)]{Mdcasiam=valmat}
shows%
\footnote{
The assumption $\dom g_{1} \cap \dom g_{2} \not= \emptyset$
in Murota \cite[Theorem 8.21 (1)]{Mdcasiam=valmat}
is satisfied,
since $\dom g_{1} = \dom g_{2} = \RR^{N}$.
} 
\begin{equation} \label{mnatconcavexcmult3fenc}
\max_{J \subseteq Y_{0}}  \{ f_{1}(J) + f_{2}(J) \}
= \inf_{q \in \RR^{Y_{0}}} \{ g_{1}(q) + g_{2}(-q) \},
\end{equation}
where the maximum on the left-hand side is
defined to be $-\infty$ 
if $\dom f_{1} \cap \dom f_{2} = \emptyset$.
Combining \eqref{mnatconcavexcmult3fenc} with Lemma \ref{LMg1qg2q} below, we obtain 
\[
\max_{J \subseteq Y_{0}}  \{ f_{1}(J) + f_{2}(J) \}
= \inf_{q \in \RR^{Y_{0}}} \{ g_{1}(q) + g_{2}(-q) \}
\geq f( X) + f( Y ),
\]
which shows the desired inequality (\ref{mnatconcavexcmult3})
as well as the finiteness of the value of \eqref{mnatconcavexcmult3fenc}.
\end{proof}

\begin{lemmaM} \label{LMg1qg2q}
For any $q \in \RR^{Y_{0}}$, we have
$g_{1}(q) + g_{2}(-q) \geq f( X) + f( Y )$.
\end{lemmaM}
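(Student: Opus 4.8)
The plan is to strip away the conjugate functions by absorbing $q$ into the objective, turning the claim into a statement about a single \Mnat-concave function that I can attack by induction on $|I|$ using the ordinary exchange property \MncavS alone. Set $C = X \cap Y$, extend $q$ to $\tilde q \in \RR^{N}$ by $\tilde q_{j} = q_{j}$ on $Y_{0}$ and $\tilde q_{j} = 0$ elsewhere, and let $h = f_{-\tilde q}$, which is again \Mnat-concave. Writing the sets of $\dom f_{1}$ as $(X \setminus I) \cup J$ and those of $\dom f_{2}$ as $(Y \setminus L) \cup I$ with $J, L \subseteq Y_{0}$, a direct substitution (using $\tilde q(X) = 0$ and $\tilde q(Y) = q(Y_{0})$) gives $g_{1}(q) = \max_{Z \in \mathcal{I}_{1}} h(Z)$ and $g_{2}(-q) = q(Y_{0}) + \max_{W \in \mathcal{I}_{2}} h(W)$, together with $f(X) + f(Y) = h(X) + h(Y) + q(Y_{0})$, where $\mathcal{I}_{1} = \{ Z \mid X \setminus I \subseteq Z \subseteq (X \setminus I) \cup Y_{0} \}$ and $\mathcal{I}_{2} = \{ W \mid C \cup I \subseteq W \subseteq Y \cup I \}$. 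After the common term $q(Y_{0})$ cancels, the lemma becomes equivalent to the purely combinatorial inequality $\max_{Z \in \mathcal{I}_{1}} h(Z) + \max_{W \in \mathcal{I}_{2}} h(W) \geq h(X) + h(Y)$, and this is what I would prove.

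I would argue by induction on $|I|$. For $I = \emptyset$ we have $X \in \mathcal{I}_{1}$ and $Y \in \mathcal{I}_{2}$, so the two maxima already bound $h(X)$ and $h(Y)$ from below. For the step, fix $i \in I$ and observe that $\mathcal{I}_{1} = \{ Z' - i \mid Z' \in \mathcal{I}_{1}^{(i)} \}$ and $\mathcal{I}_{2} = \{ W' + i \mid W' \in \mathcal{I}_{2}^{(i)} \}$, where $\mathcal{I}_{1}^{(i)}$ and $\mathcal{I}_{2}^{(i)}$ denote the two families built from $I - i$ in place of $I$. Hence the induction hypothesis for $I - i$ yields $B_{1} \in \mathcal{I}_{1}$ and $B_{2} \in \mathcal{I}_{2}$ with $h(B_{1} + i) + h(B_{2} - i) \geq h(X) + h(Y)$. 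I would then choose $e^{*} \in I$ maximizing the map $e \mapsto h(B_{1} + e) + h(B_{2} - e)$, so that $h(B_{1} + e^{*}) + h(B_{2} - e^{*}) \geq h(X) + h(Y)$, and apply \MncavS to $h$ at the pair $(B_{1} + e^{*},\, B_{2} - e^{*})$ with the element $e^{*} \in (B_{1} + e^{*}) \setminus (B_{2} - e^{*})$.

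The exchange bounds $h(B_{1} + e^{*}) + h(B_{2} - e^{*})$ by the maximum of three kinds of terms: deleting $e^{*}$, which gives $h(B_{1}) + h(B_{2})$; swapping $e^{*}$ for some $k \in Y_{0}$, which gives $h(B_{1} + k) + h(B_{2} - k)$; and swapping $e^{*}$ for some $k \in I - e^{*}$, which also has the form $h(B_{1} + k) + h(B_{2} - k)$. The first two kinds produce pairs lying again in $\mathcal{I}_{1} \times \mathcal{I}_{2}$, hence are genuine witnesses, whereas the third kind stays outside these families. The crux is that the third kind can never be the sole term attaining the maximum: by maximality of $e^{*}$ we have $h(B_{1} + k) + h(B_{2} - k) \leq h(B_{1} + e^{*}) + h(B_{2} - e^{*})$ for every $k \in I$. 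Therefore one of the first two options attains a value $\geq h(B_{1} + e^{*}) + h(B_{2} - e^{*}) \geq h(X) + h(Y)$ and supplies the required pair in $\mathcal{I}_{1} \times \mathcal{I}_{2}$. I expect the main obstacle to be exactly this within-$I$ swap, in which the exchange trades one element of $I$ for another and makes no visible progress; this is what would make a naive element-by-element transfer cycle. Since every such swap leaves the combined multiset $(B_{1} + e) \uplus (B_{2} - e)$ unchanged, it cannot be broken by a generic linear perturbation, and it is precisely the extremal choice of $e^{*}$ that neutralizes it and closes the induction.
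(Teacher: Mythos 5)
Your opening reduction is correct, and it is genuinely different from the paper's route: absorbing $q$ into $h = f_{-\tilde q}$ and recasting the lemma as the decoupled inequality $\max_{Z \in \mathcal{I}_{1}} h(Z) + \max_{W \in \mathcal{I}_{2}} h(W) \geq h(X) + h(Y)$ is valid, and so are your base case and the reindexing bijections. The gap is in the inductive step, at exactly the point you call the crux. The property $\MncavS$ is an existential disjunction: it guarantees that \emph{some} option (deleting $e^{*}$, or swapping $e^{*}$ with some $j \in (B_{2}-e^{*}) \setminus (B_{1}+e^{*})$) satisfies the inequality, and says nothing about which one. Writing $S = h(B_{1}+e^{*}) + h(B_{2}-e^{*})$, maximality of $e^{*}$ gives the upper bound $h(B_{1}+k)+h(B_{2}-k) \leq S$ for within-$I$ swaps, and the exchange property gives the lower bound $\geq S$ for some option; these two facts are perfectly compatible with the within-$I$ swap being the \emph{only} option attaining $\geq S$ (then with equality) while both good options stay strictly below $S$. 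The extremal choice of $e^{*}$ does not neutralize the bad option; it merely prevents it from exceeding $S$, so the step ``therefore one of the first two options attains a value $\geq S$'' is a non sequitur.

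This failure actually occurs. On $N = \{1,2,3,4\}$ let $h(\{1,3\}) = h(\{1,4\}) = h(\{2,3\}) = h(\{2,4\}) = 0$, $h(\{1,2\}) = h(\{3,4\}) = -1$, and $h \equiv -\infty$ otherwise; this is M-concave ($\dom h$ is the base family of $U_{2,4}$, and the maximum of the three pairing sums $-2, 0, 0$ is attained twice, cf.\ Theorem~\ref{THmcavlocexc01}), hence \Mnat-concave. Take $X = \{1,2\}$, $Y = \{3,4\}$, $I = \{1,2\}$, $q = 0$. The induction hypothesis for $I - 1$ may legitimately return $B_{1}' = \{1,3\}$, $B_{2}' = \{2,4\}$ (sum $0 \geq -2 = h(X)+h(Y)$), i.e.\ $B_{1} = \{3\}$, $B_{2} = \{1,2,4\}$. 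Both $e = 1$ and $e = 2$ give value $0$, so say $e^{*} = 1$ and $S = 0$. Applying $\MncavS$ to $(\{1,3\},\{2,4\})$ with $e^{*} = 1$: deletion gives $h(\{3\}) + h(\{1,2,4\}) = -\infty$; the $Y_{0}$-swap $j = 4$ gives $h(\{3,4\}) + h(\{1,2\}) = -2$; the within-$I$ swap $j = 2$ gives $h(\{2,3\}) + h(\{1,4\}) = 0$. So the exchange is witnessed \emph{only} by the within-$I$ swap, both good options are strictly below $S$, and iterating merely cycles between $(\{1,3\},\{2,4\})$ and $(\{2,3\},\{1,4\})$. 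The lemma is of course still true here: the unique witness pair is $(\{3,4\},\{1,2\})$, of value $-2 = h(X)+h(Y)$ --- but it lies strictly below the level $S$ that your induction maintains, so no refinement insisting on ``$\geq S$'' can ever reach it. This obstruction (which, as you note, survives linear perturbation because within-$I$ swaps preserve $B_{1} \uplus B_{2}$) is precisely why the paper does not transfer $I$ element by element: it invokes submodularity of the conjugate $g(p) = \max_{Z}\{f(Z)-p(Z)\}$ and evaluates it at two $\pm M$-valued price vectors, so that the lattice operations $p^{(1)} \vee p^{(2)}$ and $p^{(1)} \wedge p^{(2)}$ carry out the entire transfer of $I$ in one global uncrossing, with no intermediate configurations in which to get trapped.
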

\begin{proof}
Let $g$ be the (convex) conjugate function of $f$, i.e.,
\begin{equation} \label{gpdef}
g(p) =  \max_{Z \subseteq N} \{  f(Z) - p(Z) \} 
\qquad (p \in \RR^{N}).
\end{equation}

\begin{figure}\begin{center}
\includegraphics[height=30mm]{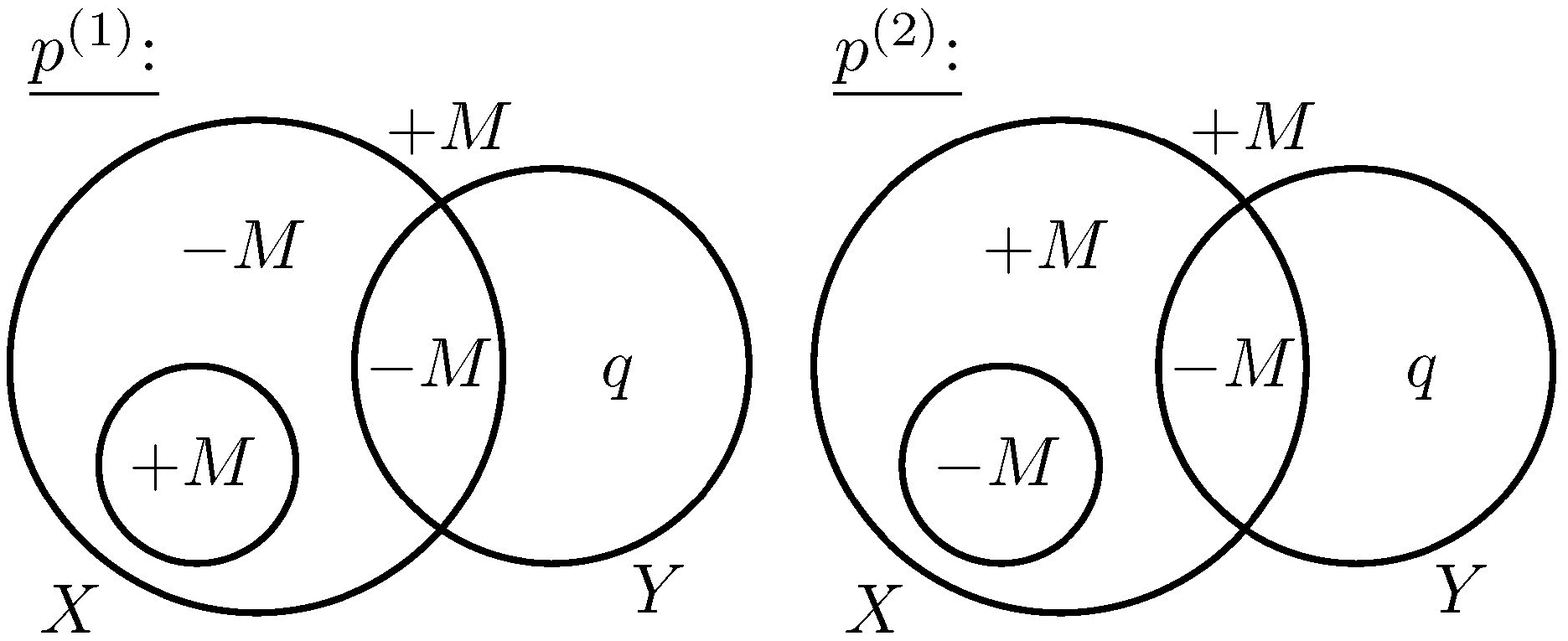}
\quad
\includegraphics[height=30mm]{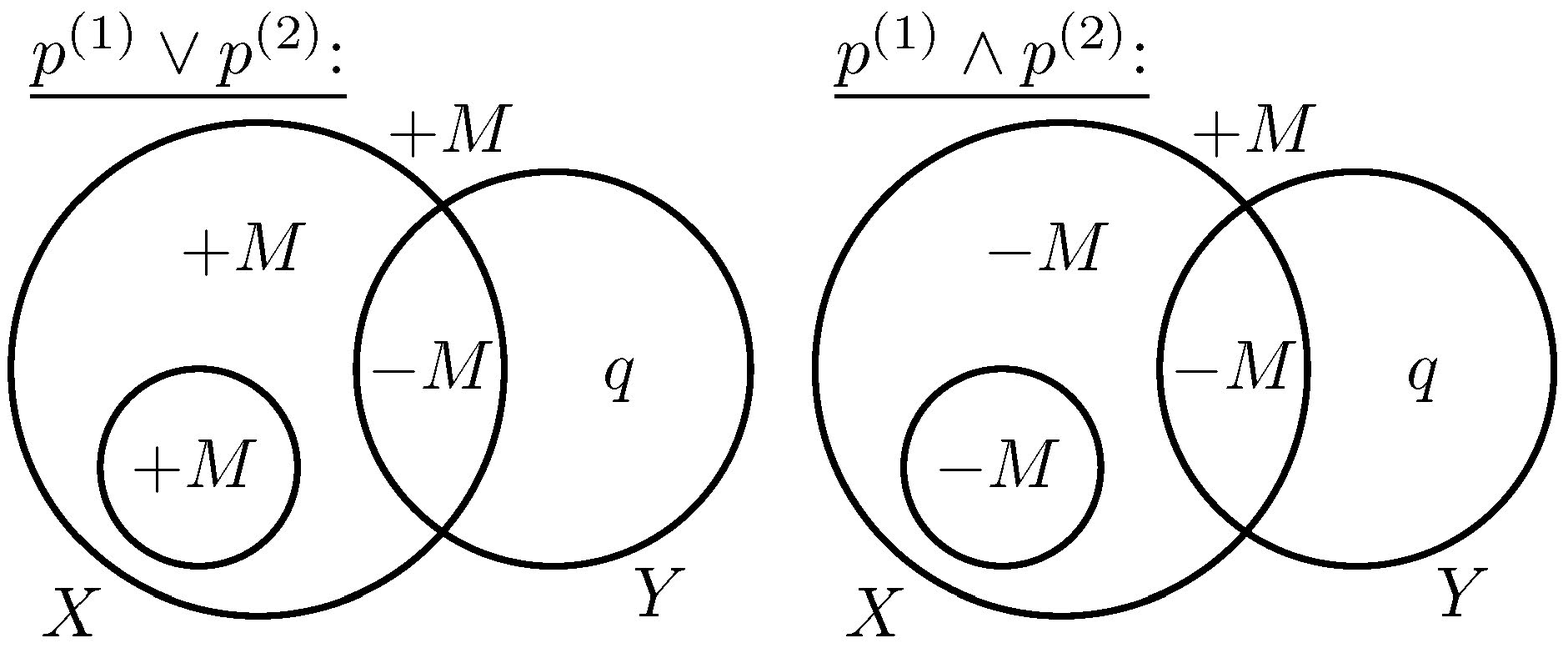}
\vspace{0.5\baselineskip}
\caption{Vectors $p^{(1)}$, $p^{(2)}$, $p^{(1)} \vee p^{(2)}$, and $p^{(1)} \wedge p^{(2)}$}
	\label{FGp1p2def}
\end{center}\end{figure}

For a vector $q \in \RR^{Y_{0}}$ we define 
$p^{(1)}, p^{(2)} \in \RR^{N}$ by
\begin{align*} 
p^{(1)}_{i}  &= p^{(2)}_{i}  =
   \left\{  \begin{array}{ll}
    q_{i}          &   (i  \in Y_{0}) ,     \\
   - M     &   (i \in C ),  \\
   + M     &   (i \in N \setminus (X \cup Y)  ) , \\
                     \end{array}  \right.
\quad 
p^{(1)}_{i}  = - p^{(2)}_{i}  =
   \left\{  \begin{array}{ll}
   - M     &   (i \in X_{0} \setminus I ),  \\
   + M     &   (i \in I ),  \\
                     \end{array}  \right.
\end{align*}
where $M$ is a sufficiently large positive number
(see Fig.~\ref{FGp1p2def}).
Then the maximizer $Z$ of $g(p)$ in (\ref{gpdef}) 
for $p = p^{(1)}$ 
must include $(X_{0} \setminus I) \cup C$
and avoid $I \cup ( N \setminus (X \cup Y) )$.
For $p = p^{(2)}$, the maximizer $Z$ 
must include $I \cup C$ and avoid 
$( X \setminus (I \cup C) ) \cup ( N \setminus (X \cup Y) )$.
Therefore,  we have
\begin{align*}
g_{1}(q) &=  
 \max_{J \subseteq Y_{0}} \{ f( (X_{0} \setminus I) \cup C \cup J) - q(J) \} 
\\ & =
g(p^{(1)}) - M (|X_{0} \setminus I|+|C|),
\\
g_{2}(-q) &= 
 \max_{J \subseteq Y_{0}} \{  
 f(  I \cup C \cup (Y_{0} \setminus J) ) + q(J) \} 
\\
 &= 
 \max_{K \subseteq Y_{0}} \{  
 f(  I \cup C \cup K ) - q(K) \} + q(Y_{0}) 
\\ & =
g(p^{(2)}) - M (|I|+|C|) + q(Y_{0}). 
\end{align*}
The function $g$ is submodular by 
\cite[Theorem~6.19]{Mdcasiam=valmat}
and therefore
\begin{align}
&  g_{1}(q) + g_{2}(-q)
\notag \\
& = 
g(p^{(1)}) +g(p^{(2)}) - M (|X|+|C|) + q(Y_{0})
\notag \\
 &\geq 
g(p^{(1)} \vee p^{(2)}) +g(p^{(1)} \wedge p^{(2)})
 - M (|X|+|C|) + q(Y_{0}). 
\label{g1g2gg}
\end{align}
Since
\begin{align*} 
& (p^{(1)} \vee p^{(2)})_{i}   = (p^{(1)} \wedge p^{(2)})_{i}  =
   \left\{  \begin{array}{ll}
    q_{i}          &   (i  \in Y_{0}) ,     \\
   - M     &   (i \in C ),  \\
   + M     &   (i \in N \setminus (X \cup Y)  ) , \\
                     \end{array}  \right.
\\
& (p^{(1)} \vee p^{(2)})_{i}   = - (p^{(1)} \wedge p^{(2)})_{i}  =
   + M    \quad   (i \in X_{0}),  \\
\end{align*}
we have
\begin{align} 
g(p^{(1)} \vee p^{(2)}) & \geq
f(Y) - q(Y_{0}) + M|C|,
\label{gpveep}
\\
g(p^{(1)} \wedge p^{(2)}) & \geq
f(X) + M |X| ,
\label{gpwedgep}
\end{align}
where (\ref{gpveep}) and (\ref{gpwedgep}) follow from (\ref{gpdef}) with 
$Z=Y$ and $Z=X$, respectively.
The substitution of (\ref{gpveep}) and (\ref{gpwedgep}) 
into (\ref{g1g2gg}) yields the desired inequality
$g_{1}(q) + g_{2}(-q) \geq f(X) + f(Y)$.
\end{proof}

Lemma \ref{LMmnatexcm0} implies the multiple exchange property $\McavmS$
for M-concave functions.

\begin{lemmaM}  \label{LMmexc0m}
$\McavmS$ holds for every M-concave function.
\end{lemmaM}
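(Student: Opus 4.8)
The plan is to obtain $\McavmS$ as a direct corollary of the \Mnat-concave multiple exchange property $\MncavmS$ just established in Lemma~\ref{LMmnatexcm0}, the only additional work being to verify the cardinality constraint $|J| = |I|$. First I would apply Proposition~\ref{PRmcav=mnatcav+equicard} to view the given M-concave function $f$ as an \Mnat-concave function whose effective domain $\dom f$ is equi-cardinal; let $r$ denote the common cardinality of the members of $\dom f$. Since $f$ is \Mnat-concave, Lemma~\ref{LMmnatexcm0} (the implication $\MncavS \Rightarrow \MncavmS$) applies, so for any $X, Y \subseteq N$ and $I \subseteq X \setminus Y$ there exists some $J \subseteq Y \setminus X$ with
\[
f(X) + f(Y) \leq f((X \setminus I) \cup J) + f((Y \setminus J) \cup I).
\]

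Next I would show that this very $J$ already satisfies $|J| = |I|$. I may assume $X, Y \in \dom f$, since otherwise $f(X) + f(Y) = -\infty$ and there is nothing to prove; then $|X| = |Y| = r$. Because the left-hand side of the displayed inequality is finite, the right-hand side must be finite as well, so both $(X \setminus I) \cup J$ and $(Y \setminus J) \cup I$ belong to $\dom f$ and hence have cardinality $r$. Using $I \subseteq X$ and $J \subseteq Y \setminus X$ (so that $J$ is disjoint from $X \setminus I$), I would compute
\[
r = |(X \setminus I) \cup J| = |X| - |I| + |J| = r - |I| + |J|,
\]
which forces $|J| = |I|$; the companion set $(Y \setminus J) \cup I$, with $I$ disjoint from $Y$, yields the same relation $r = |Y| - |J| + |I|$ and is consistent. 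This exhibits a $J$ of the required cardinality, establishing $\McavmS$.

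I do not anticipate a genuine obstacle here: the whole content of the lemma is carried by Lemma~\ref{LMmnatexcm0}, and the passage to the equi-cardinal setting is a short bookkeeping argument. The one point deserving slight care is the observation that the $J$ produced by $\MncavmS$ \emph{already} makes the right-hand side finite, so there is no need to re-select $J$ or to run a separate argument; finiteness pins both exchanged sets to cardinality $r$, and this alone forces $|J| = |I|$ with no case analysis. This mirrors, at the level of the multiple exchange property, the way $\McavlocS$ was recovered from (L3$[\mathbb{B}]$) in the proof of Theorem~\ref{THmcavlocexc01} once the domain is known to be equi-cardinal.
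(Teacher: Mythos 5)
Your proposal is correct and follows essentially the same route as the paper: invoke Proposition~\ref{PRmcav=mnatcav+equicard} to view $f$ as an \Mnat-concave function with equi-cardinal effective domain, apply Lemma~\ref{LMmnatexcm0} to get $\MncavmS$, and observe that equi-cardinality forces $|J|=|I|$. The paper states this last step simply as the equivalence of $\MncavmS$ and $\McavmS$ on an equi-cardinal domain; your explicit finiteness-plus-counting argument ($r = |X|-|I|+|J|$) is exactly the bookkeeping behind that assertion.
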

\begin{proof}
Let $f$ be an M-concave function.
By Proposition~\ref{PRmcav=mnatcav+equicard},
$f$ is an \Mnat-convex function such that
$|X| = |Y|$ for all $X, Y \in \dom f$.
Hence 
$\MncavmS$ for $f$ is equivalent to $\McavmS$,
whereas $f$ satisfies $\MncavmS$ by Lemma \ref{LMmnatexcm0}.
\end{proof}

Finally we establish the stronger multiple exchange property $\MncavmsS$
for \Mnat-concave functions.

\begin{lemmaM}  \label{LMmnatexc0ms}
$\MncavS$ implies $\MncavmsS$.
\end{lemmaM}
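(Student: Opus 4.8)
The plan is to derive the stronger form $\MncavmsS$ for $f$ from the \emph{ordinary} multiple exchange property $\McavmS$ of the equi-cardinal lift $\tilde f$ defined in \eqref{assocMdef}. The point is that in the equi-cardinal setting every exchanged set automatically has cardinality exactly $|I|$, and that discarding the portion of it lying in the auxiliary ground set $S$ can only shrink it; this is precisely the slack that turns ``$|J|=|I|$'' into ``$|J|\le|I|$''. We may assume $X,Y\in\dom f$, since otherwise the left-hand side of \eqref{mnatconcavexcmult} equals $-\infty$ and there is nothing to prove. Fix $I\subseteq X\setminus Y$.

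First I would lift $X$ and $Y$ to subsets of $\tilde N$ of the common cardinality $r$. Choose $U,V\subseteq S$ with $|U|=r-|X|$ and $|V|=r-|Y|$ (possible since $|X|,|Y|\ge r'$ and $s\ge r-r'$), and set $\tilde X=X\cup U$, $\tilde Y=Y\cup V$, so that $\tilde f(\tilde X)=f(X)$ and $\tilde f(\tilde Y)=f(Y)$ by \eqref{assocMdef}. As $I\subseteq N$ is disjoint from both $Y$ and $V$, we have $I\subseteq\tilde X\setminus\tilde Y$. By Proposition~\ref{PRmnatequicardvalmat} the function $\tilde f$ is M-concave, so Theorem~\ref{THmultexchvalmat} (equivalently Lemma~\ref{LMmexc0m}) applies with $\tilde I=I$ and produces $\tilde J\subseteq\tilde Y\setminus\tilde X$ with $|\tilde J|=|I|$ and
\[
\tilde f(\tilde X)+\tilde f(\tilde Y)\leq \tilde f((\tilde X\setminus I)\cup\tilde J)+\tilde f((\tilde Y\setminus\tilde J)\cup I).
\]

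Next I would restrict back to $N$. Since $\tilde Y\setminus\tilde X=(Y\setminus X)\cup(V\setminus U)$ with $V\setminus U\subseteq S$, the set $J:=\tilde J\cap N$ satisfies $J\subseteq Y\setminus X$ and $|J|\le|\tilde J|=|I|$, giving the required cardinality bound. It then remains to collapse the four $\tilde f$-values to $f$-values on their $N$-parts. Each of $(\tilde X\setminus I)\cup\tilde J$ and $(\tilde Y\setminus\tilde J)\cup I$ arises from a cardinality-$r$ set by deleting $|I|$ elements and adjoining $|I|$ disjoint ones, hence again has cardinality $r$; so by \eqref{assocMdef} its $\tilde f$-value equals $f$ evaluated at its intersection with $N$, namely $f((X\setminus I)\cup J)$ and $f((Y\setminus J)\cup I)$, respectively. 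Substituting these identities into the displayed inequality yields \eqref{mnatconcavexcmult} for this $J$, which establishes $\MncavmsS$.

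Finally I would note that the argument is not circular: the property $\McavmS$ for $\tilde f$ rests on Lemma~\ref{LMmnatexcm0} (the implication $\MncavS\Rightarrow\MncavmS$) together with Proposition~\ref{PRmcav=mnatcav+equicard}, neither of which invokes $\MncavmsS$. I expect the only delicate point to be the cardinality bookkeeping: one must verify that all four padded evaluations sit at cardinality exactly $r$, so that they reduce to the intended $N$-restricted values, and — the crux of the \emph{stronger} form — that passing from $\tilde J$ to $J=\tilde J\cap N$ converts $|\tilde J|=|I|$ into $|J|\le|I|$, the slack being absorbed entirely by the elements of $\tilde J$ that fall in $S$.
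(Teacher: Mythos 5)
Your proof is correct and follows essentially the same route as the paper: both pass to the equi-cardinal lift $\tilde f$ of \eqref{assocMdef}, invoke the multiple exchange property of Lemma~\ref{LMmexc0m} for the M-concave function $\tilde f$, and observe that the part of the exchanged set $\tilde J$ falling inside the auxiliary set $S$ is exactly the slack that weakens $|\tilde J|=|I|$ to $|J|\le|I|$ for $J=\tilde J\cap N$. The paper phrases this by writing $\tilde J$ from the outset as $J\cup W$ with $J\subseteq Y\setminus X$, $W\subseteq V\setminus U$ and $|J|+|W|=|I|$, which is the same bookkeeping you carry out by intersecting with $N$.
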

\begin{proof}
Let $f: 2^{N} \to \Rminf$ be 
an M$^{\natural}$-concave function,
and consider the associated M-concave function 
$\tilde{f}: 2^{\tilde N} \to \Rminf$
as defined in (\ref{assocMdef}),
where $\tilde{N} = N \cup S$
(cf., Proposition~\ref{PRmnatequicardvalmat}).
We apply Lemma~\ref{LMmexc0m}
to this M-concave function $\tilde{f}$. 
Suppose that we are given $X, Y \in \dom f$ and a subset $I \subseteq X \setminus Y$.
Take any $U, V \subseteq S$ with $|U|=r - |X|$ and $|V|=r - |Y|$,
where $r = \max \{ |Z| \mid Z \in \dom f \}$.
Then
$X \cup U,  Y \cup V  \in \dom \tilde f$
and
$I \subseteq (X \cup U) \setminus (Y \cup V)$.
By 
Lemma \ref{LMmexc0m}
for $\tilde f$, 
there exists 
$J \subseteq Y \setminus X$
and $W \subseteq V \setminus U$  such that
$|J| + |W| = |I|$ and
\begin{align*}
\tilde f( X \cup U) +\tilde  f( Y \cup V )  
\leq  
  \tilde f( \, ( (X \setminus I) \cup J )  \cup (U  \cup W )  \,  ) +
\tilde f( \,  ( (Y \setminus J) \cup I) \cup (V  \setminus W ) \,  ),
\end{align*}
which implies
$f( X) + f( Y )   \leq  f((X \setminus I) \cup J) +f((Y \setminus J) \cup I)$
in (\ref{mnatconcavexcmult}).
Here we have $|J| \leq |I|$ since $|J| + |W| = |I|$.
\end{proof}

\section*{Acknowledgement}

The author thanks Akiyoshi Shioura
for discussion and comments.
This work was supported by 
JSPS KAKENHI Grant Number JP20K11697.



\end{document}